\crefname{equation}{}{}
\Crefname{equation}{}{}
\newtheorem{thm}{Theorem}[section]
\newtheorem{theorem}[thm]{Theorem}
\newtheorem{lem}[thm]{Lemma}
\newtheorem{lemma}[thm]{Lemma}
\newtheorem{prop}[thm]{Proposition}
\newtheorem{cor}[thm]{Corollary}
\theoremstyle{definition}
\theoremstyle{remark}
\newcommand{\thmref}[1]{Theorem~\ref{#1}}
\newcommand{\lemref}[1]{Lemma~\ref{#1}} 
\newcommand{\propref}[1]{Proposition~\ref{#1}}
\newcommand*{\dis}{\displaystyle}
\newcommand*{\qq}{\qquad}
\newcommand*{\tx}[1]{\text{#1}}
\newcommand*{\lamb}{\lambda}
\newcommand*{\del}{\delta}
\newcommand*{\ep}{\epsilon}
\newcommand*{\suchthat}{\, \middle| \,}
\newcommand*{\ftil}{\widetilde{f}}
\newcommand*{\util}{\widetilde{u}}
\newcommand*{\gtil}{\widetilde{g}}
\newcommand*{\Ntil}{\widetilde{N}}
\newcommand*{\Pminus}{P_{-}}
\newcommand*{\myoverline}[3]{\mkern -#1mu\overline{\mkern#1mu#3\mkern#2mu}\mkern -#2mu}	
\newcommand*{\Zbar}{\myoverline{-3}{0}{\Z}}
\newcommand*{\zbar}{\myoverline{-2}{0}{\z}}
\newcommand*{\ubar}{\myoverline{0}{0}{u}}
\newcommand*{\half}{\frac{1}{2}}
\newcommand*{\Rsp}{\mathbb{R}}
\newcommand*{\Csp}{\mathbb{C}}
\newcommand*{\Zsp}{\mathbb{Z}}
\newcommand*{\Scal}{\mathcal{S}}
\newcommand*{\Dcal}{\mathcal{D}}
\newcommand*{\Lcal}{\mathcal{L}}
\newcommand*{\Ltwo}{L^2}
\newcommand*{\Linfty}{L^{\infty}}
\newcommand*{\Hhalf}{\dot{H}^\half}
\newcommand*{\al}{\alpha}
\newcommand*{\ap}{{\alpha'}}
\newcommand*{\eqdef}{\mathrel{\mathop:}=}
\newcommand*{\diff}{\mathop{}\! d}
\newcommand*{\compose}[1]{\circ{#1}}
\newcommand*{\Hil}{\mathbb{H}}
\newcommand*{\Hcal}{\mathcal{H}}
\newcommand*{\Id}{\mathbb{I}}
\newcommand*{\Imag}{\tx{Im}}
\newcommand*{\Real}{\tx{Re}}
\newcommand*{\Jdel}{J_\delta}
\newcommand*{\sgn}{sgn}
\newcommand*{\grad}{\nabla}
\newcommand*{\Dt}{D_t}
\newcommand*{\pt}{\partial_t}
\newcommand*{\ps}{\partial_s}
\newcommand*{\px}{\partial_x}
\newcommand*{\pap}{\partial_\ap}
\newcommand*{\pal}{\partial_\al}
\newcommand*{\Dal}{D_{\al}}
\newcommand*{\Dap}{D_{\ap}}
\newcommand*{\Dabs}{\abs{D}}
\newcommand*{\Fcal}{\mathcal{F}}
\newcommand*{\Asigma}{A_{\sigma}}
\newcommand*{\bvar}{b}
\newcommand*{\avar}{a}
\newcommand*{\h}{h}
\newcommand*{\hal}{\h_\al}
\newcommand*{\hinv}{\h^{-1}}
\newcommand*{\thvar}{\theta}
\newcommand*{\Th}{\Theta}
\newcommand*{\z}{z}
\newcommand*{\zal}{\z_\al}
\newcommand*{\zalbar}{\zbar_\al}
\newcommand*{\zalabs}{\abs{\zal}}
\newcommand*{\zt}{\z_t}
\newcommand*{\ztbar}{\zbar_t}
\newcommand*{\Z}{Z}
\newcommand*{\Zap}{\Z_{,\ap}}
\newcommand*{\Zapbar}{\Zbar_{,\ap}}
\newcommand*{\Zapabs}{\abs{\Zap}}
\newcommand*{\Zt}{\Z_t}
\newcommand*{\Ztbar}{\Zbar_t}
\newcommand*{\Ztap}{\Z_{t,\ap}}
\DeclarePairedDelimiter{\oldbrac}{\lparen}{\rparen}			
\NewDocumentCommand{\brac}{ s o m }{						
	\IfBooleanT{#1}{
  		\IfValueT{#2}{\oldbrac[#2]{#3}}
		\IfValueF{#2}{\oldbrac{#3}} 
	}
	\IfBooleanF{#1}{
  		\IfValueT{#2}{\PackageError{mypackage}{Incorrect use of brac. Insert star}{}}
		\IfValueF{#2}{\oldbrac*{#3}} 
	}		
}
\DeclarePairedDelimiter\oldcbrac{\lbrace}{\rbrace}				
\NewDocumentCommand{\cbrac}{ s o m }{					
	\IfBooleanT{#1}{
  		\IfValueT{#2}{\oldcbrac[#2]{#3}}
		\IfValueF{#2}{\oldcbrac{#3}} 
	}
	\IfBooleanF{#1}{
  		\IfValueT{#2}{\PackageError{mypackage}{Incorrect use of cbrac. Insert star}{}}
		\IfValueF{#2}{\oldcbrac*{#3}} 
	}		
}
\DeclarePairedDelimiter\oldsqbrac{\lbrack}{\rbrack}				
\NewDocumentCommand{\sqbrac}{ s o m }{					
	\IfBooleanT{#1}{
  		\IfValueT{#2}{\oldsqbrac[#2]{#3}}
		\IfValueF{#2}{\oldsqbrac{#3}} 
	}
	\IfBooleanF{#1}{
  		\IfValueT{#2}{\PackageError{mypackage}{Incorrect use of sqbrac. Insert star}{}}
		\IfValueF{#2}{\oldsqbrac*{#3}} 
	}		
}
\DeclarePairedDelimiter\oldabrac{\langle}{\rangle}				
\NewDocumentCommand{\abrac}{ s o m }{					
	\IfBooleanT{#1}{
  		\IfValueT{#2}{\oldabrac[#2]{#3}}
		\IfValueF{#2}{\oldabrac{#3}} 
	}
	\IfBooleanF{#1}{
  		\IfValueT{#2}{\PackageError{mypackage}{Incorrect use of abrac. Insert star}{}}
		\IfValueF{#2}{\oldabrac*{#3}} 
	}		
}
\DeclarePairedDelimiter{\oldabs}{\lvert}{\rvert}
\NewDocumentCommand{\abs}{ s o m }{						
	\IfBooleanT{#1}{
  		\IfValueT{#2}{\oldabs[#2]{#3}}
		\IfValueF{#2}{\oldabs{#3}} 
	}
	\IfBooleanF{#1}{
  		\IfValueT{#2}{\PackageError{mypackage}{Incorrect use of abs. Insert star}{}}
		\IfValueF{#2}{\oldabs*{#3}} 
	}		
}
\DeclarePairedDelimiterX{\oldnorm}[1]{\lVert}{\rVert}{#1}
\NewDocumentCommand{\norm}{ s o o m }{					
	\IfValueT{#2} {
		\IfBooleanT{#1}{
  			\IfValueT{#3}{\oldnorm[#2]{#4}_{#3}}
			\IfValueF{#3}{\oldnorm{#4}_{#2}} 
		}
		\IfBooleanF{#1}{
  			\IfValueT{#3}{\PackageError{mypackage}{Incorrect use of norm. Insert star}{}}
			\IfValueF{#3}{\oldnorm*{#4}_{#2}} 
		}
	}
	\IfValueF{#2} {
		\IfBooleanT{#1}{\oldnorm{#4}}	
		\IfBooleanF{#1}{\oldnorm*{#4}}		
	}	
}
\def\black@#1{%
    \noalign{%
        \ifdim#1>\displaywidth
            \dimen@\prevdepth
            \nointerlineskip
            \vskip-\ht\strutbox@
            \vskip-\dp\strutbox@
            \vbox{\noindent\hbox to \displaywidth{\hbox to#1{\strut@\hfill}}}%
            \prevdepth\dimen@
        \fi
    }%
}
\renewcommand{\tocsection}[3]{%
  \indentlabel{\@ifnotempty{#2}{\bfseries\ignorespaces#1 #2\quad}}\bfseries#3}
\renewcommand{\tocsubsection}[3]{%
  \indentlabel{\@ifnotempty{#2}{\ignorespaces#1 #2\quad}}#3}
\newcommand\@dotsep{4.5}
\def\@tocline#1#2#3#4#5#6#7{\relax
  \ifnum #1>\c@tocdepth 
  \else
    \par \addpenalty\@secpenalty\addvspace{#2}%
    \begingroup \hyphenpenalty\@M
    \@ifempty{#4}{%
      \@tempdima\csname r@tocindent\number#1\endcsname\relax
    }{%
      \@tempdima#4\relax
    }%
    \parindent\z@ \leftskip#3\relax \advance\leftskip\@tempdima\relax
    \rightskip\@pnumwidth plus1em \parfillskip-\@pnumwidth
    #5\leavevmode\hskip-\@tempdima{#6}\nobreak
    \leaders\hbox{$\m@th\mkern \@dotsep mu\hbox{.}\mkern \@dotsep mu$}\hfill
    \nobreak
    \hbox to\@pnumwidth{\@tocpagenum{\ifnum#1=1\bfseries\fi#7}}\par
    \nobreak
    \endgroup
  \fi}
\renewcommand\csname r@tocindent0\endcsname{0pt}
\def\l@subsection{\@tocline{2}{0pt}{2.5pc}{5pc}{}}
 \def\@testdef #1#2#3{%
   \def\reserved@a{#3}\expandafter \ifx \csname #1@#2\endcsname
  \reserved@a  \else
 \typeout{^^Jlabel #2 changed:^^J%
 \meaning\reserved@a^^J%
 \expandafter\meaning\csname #1@#2\endcsname^^J}%
 \@tempswatrue \fi}
\newcommand*{\rom}[1]{\expandafter\@slowromancap\romannumeral #1@}
\patchcmd{\@sect}{\@addpunct.}{}{}{}
\patchcmd{\subsection}{-.5em}{1em}{}{}
   \def\MR#1{}
\begin{document}

\title[Self-Similar Solutions to the Hele-Shaw Problem]{Self-Similar Solutions to the Hele-Shaw Problem with Surface Tension}
\author{Siddhant Agrawal and Neel Patel}
\address{\parbox{\linewidth}{Department of Mathematics \\
University of Colorado Boulder \\
Boulder, Colorado, USA 80309\smallskip}}
\email{Siddhant.Agrawal@colorado.edu}
\address{\parbox{\linewidth}{Department of Mathematics and Statistics \\
University of Maine \\
Orono, Maine, USA 04469\smallskip}}
\email{neel.patel@maine.edu}
%
\begin{abstract}
We consider the Hele-Shaw problem with surface tension in an infinite domain. We prove the existence of a family of self-similar solutions. At $t=0$, these solutions have a corner of angle $\theta$ with $ 0 < |\theta - \pi| \ll 1$, and for $t>0$, the solutions are smooth.
\end{abstract}
%

\maketitle
\tableofcontents

\section{Introduction}

Fluid flow in porous media is described by Darcy's law. In two dimensions, Darcy's law is
\begin{align}\label{eq:Darcy}
 \frac{\mu(x,t)}{\kappa(x,t)} u(x,t) = -\grad P(x,t).
\end{align}
where $u(x,t)$ is the velocity of the fluid at position $x\in\mathbb{R}^{2}$ and time $t\geq 0$, $P(x,t)$ is the pressure, $\mu(x,t)$ is the fluid viscosity and $\kappa(x,t)$ is the permeability of the media. This type of fluid flow is replicated experimentally in a Hele-Shaw cell. The Hele-Shaw problem describes the dynamics of the boundary $\partial\Omega(t)$ of a single fluid in porous media or in a Hele-Shaw cell. If there are two fluids and a gravitational force, the dynamics of the boundary are described by the Muskat problem. In both of these problems, there is extensive literature on the question of local and global well-posedness. For references, see \cite{Am04, CoCoGa11, EsMa11, CaCoFeGaGo13, CoCo16, ChGr16, CoGa17, PaSt17, GaGaPa19, AlLa20, AlMeSm20, NgPa20, CoLa21, AlNg21b, KeNgXu22, DoGaNg23, GaGaPaSt23, MaMa23, La24, GaGaPaSt25, ScTuTu25} and further references therein.

In this paper, we are interested in the dynamics of interfaces with corners in a Hele-Shaw cell. We will consider the Hele-Shaw problem with surface tension and no gravity for a single incompressible fluid of constant density on an infinite domain, with no bottom. Denote the region occupied by the fluid as $\Omega(t) \subset \mathbb{R}^{2}$. Normalizing the fluid characteristics $\mu = \kappa = 1$, the Hele-Shaw problem is
\begin{align}\label{eq:HeleShaw}
\begin{aligned}
	u &= - \grad P  \qq \tx{ in } \Omega(t)\\
	\nabla\cdot u &= 0 \qq \qq \tx{ in } \Omega(t).
\end{aligned}
\end{align}
Due to surface tension, the pressure on the boundary is given by 
\begin{align}\label{eq:surfacetension}
	P = -\sigma \partial_{s}\thvar \qq\text{ on } \partial\Omega(t)
\end{align}
where $\thvar$ is the angle of interface with respect to the positive $x -axis$, $\ps$ is the arc length derivative, and $\sigma \geq 0 $ is the coefficient of surface tension. 

Recently, there has been a lot of interest in studying the Hele-Shaw and Muskat problem for interfaces with corners. In the case of zero surface tension for the Hele-Shaw problem, we see different dynamics for corners of angle less than $\pi/2$ and for corners of angle greater than $\pi/2$. Corners of angle less than $\pi/2$ remain rigid for a short time, which is called the waiting time phenomenon. On the other hand, corners of angle greater than $\pi/2$ instantaneously smoothen out. See \cite{KiLa95, ChKi06, ChJeKi07, ChJeKi09, Sa10, BaVa14, AgPaWu23, AlKo23}. For the Muskat problem without surface tension, for angles greater $\pi/2$, the interface smoothens out instantaneously \cite{Ca19, GaGoHaPa24}. For angles less than $\pi/2$, in contrast with the Hele-Shaw problem, it is expected that the interface smoothens out instantaneously \cite{GaGoNgPa22}. In the case of zero surface tension, we would also like to mention that self-similar solutions that initially have a corner have also been studied in both the Hele-Shaw and Muskat problems \cite{KiLa95, GaGoNgPa22, Na25}.

We note that all of the above papers concerning corner dynamics have been in the case of zero surface tension. Recently for the Muskat problem, global wellposedness and instantaneous smoothing for initial interfaces with small Lipschitz constant has been proved in \cite{ChHuNg24}. However, the Hele-Shaw problem is structurally different from the Muskat problem because it is a one-phase and not a two-phase problem. To the best of our knowledge, there are no rigorous results on the study of dynamics of corners with surface tension for the Hele-Shaw problem, although there are numerical or asymptotic studies \cite{GaMeVi06, MeSaPaVi07}. There have been some work regarding the contact angle problem with surface tension \cite{AvBa05, BoCaGa25}; however, having a corner in the interior of an interface with surface tension introduces significant new difficulties and has not been studied rigorously to date.

In this paper, we consider the Hele Shaw equation with surface tension and prove the existence of self similar solutions. The solutions we construct have a corner at $t=0$ and are smooth for all $t>0$. The angle of the corner of the fluid domain $\theta_0$ at $t=0$ satisfies $\abs{\theta_0 - \pi} < \ep_0$ for some small $\ep_0>0$. We construct these self-similar solutions in conformal coordinates. The precise result is stated in Theorem \ref{thm:mainselfsimilarresult}.

This paper provides the first proof of the existence of solutions to the Hele-Shaw problem with surface tension where the interface has a corner at $t=0$. Because of surface tension, the equation we obtain for our self-similar solution is a third order nonlocal equation of elliptic type. The fact that the equation is of third order makes it significantly more challenging to study as compared to the first order equations for self-similar solutions of previous results. A significant advantage of the conformal coordinate approach of this paper is that we derive a very explicit form of the self-similar equation. In particular, the only nonlocal operator in our self-similar equation is the Hilbert transform. Moreover, from our equation, we derive a linear equation that we solve exactly. This solution gives an excellent approximation to the self-similar solution and this approximation also matches with the linear analysis in \cite{GaMeVi06}. On the other hand, a price that we have to pay for using conformal coordinates is that our self-similar equation has coefficients that grow at infinity. This difficulty makes our linear analysis much more technical.

We believe that the techniques of this paper can help with addressing the existence of solutions to other free boundary problems (e.g. the wave wave equation) with surface tension where the initial interface has a corner.

The paper is organized as follows. In Section \ref{sec:prelim}, we introduce the notation used in the paper. In particular, we define the conformal mapping and derive the interface equation in conformal coordinates. In Section \ref{sec:mainresults}, we define the notion of self-similar solutions and state the main results of this paper. Next, in Section \ref{sec:selfsimilareq}, we derive the self-similar equation in conformal coordinates. In Section \ref{sec:linearanalysis}, we study the corresponding linear problem and then, in Section \ref{sec:fullequation}, we apply our linear analysis to solve the self-similar equation.

\section{Notation and Preliminaries}\label{sec:prelim}

In this section and the next we will assume some regularity and decay properties of the solution to derive the main evolution equations, which we then solve in later sections to construct solutions. Near the end of this paper, we will show that these constructed solutions indeed satisfy the  assumptions assumed here and satisfy the original Hele Shaw equation.

In this paper we will follow the notation used in \cite{Ag21, AgPaWu23}. We write $a \lesssim b$ if there exists a universal constant $C>0$ so that $a \leq Cb$. We write $a \lesssim_M b$ if there exists a constant $C(M)$ depending only on $M$ so that $a \leq C(M)b$. Similar definitions for $a\lesssim_{M_1, M_2} b$ etc. 
For two complex valued functions $f$ and $g$, we write $f(z) \simeq g(z)$ as $z \to z_0$ if $\lim_{z \to z_0} \frac{f(z)}{g(z)} = 1$. For $z \in \Csp$ and $s>0$, we define $z^s = e^{s\ln(z)}$ where the branch cut for $\ln(z)$ is the positive imaginary axis. All singular integrals will be understood in the principle value sense, and we will frequently suppress writing p.v. in front of the integrals. 

The Fourier transform for a function $f: \Rsp \to \Csp$ is defined as
 \begin{align*}
\hat{f}(\xi) = \frac{1}{\sqrt{2\pi}}\int_\Rsp e^{-ix\xi}f(x) \diff x
\end{align*}
We will denote by $\Dcal(\Rsp)$ the space of smooth functions on $\Rsp$ with compact support and $\Scal(\Rsp)$ is the Schwartz space of rapidly decreasing functions.  $\Dcal'(\Rsp)$ and $\Scal'(\Rsp)$ are the space of distributions and tempered distributions on $\Rsp$ respectively. A Fourier multiplier with symbol $a(\xi)$ is the operator $T_a$ defined formally by the relation $\dis \widehat{T_a{f}}(\xi) = a(\xi)\hat{f}(\xi)$. For $s \geq 0$ the operators $\Dabs^s $ and $\langle D \rangle^s$ are defined as the Fourier multipliers with symbols $\abs{\xi}^s$ and $(1 + \abs{\xi}^2)^{\frac{s}{2}}$ respectively. 
For $s \geq 0$, the Sobolev space $H^s(\Rsp)$ and homogenous Sobolev space $\dot{H}^s(\Rsp)$ are the space of all $f \in \Scal'(\Rsp)\cap L^1_{loc}(\Rsp)$ with
\begin{align*}
\norm[H^s]{f} = \norm*[\Ltwo(\diff x)]{\langle D \rangle^s f} < \infty \\
\norm[\dot{H}^s]{f} = \norm*[\Ltwo(\diff x)]{\Dabs^s f} < \infty
\end{align*}
Note that for $s>0$, constant functions are in the homogenous Sobolev space $\dot{H}^s(\Rsp)$. We define the Hilbert transform as
\begin{align}\label{eq:HilbertP}
\Hil f(x) := \frac{1}{i \pi} \text{p.v.} \int_{\Rsp} \frac{f(y)}{x-y} \diff y
\end{align}
Observe that with this definition of the Hilbert transform we have $\sqrt{-\Delta} = \Dabs = i \Hil \px$ and the Fourier multiplier of $\Hil$ is $- sgn(\xi)$. The Hilbert transform defined above satisfies the following property.
\begin{lem}[\cite{Ti86}]\label{lem:Hilhol}
Let $1<p<\infty$ and let $F(z)$ be a holomorphic function in the lower half plane with $F(z) \to 0$ as $z\to \infty$. Then the following are equivalent:
\begin{enumerate}
\item $\dis \sup_{y<0} \norm[p]{F(\cdot + iy)} < \infty$
\item $F(z)$ has a boundary value $f$, non-tangentially almost everywhere with $f \in L^p(\Rsp)$ and $\Hil(f) = f$. 
\end{enumerate}
\end{lem}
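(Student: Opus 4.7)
The plan is to prove this as a classical identification of $F$ with a Hardy space function on the lower half plane, handling the two implications separately.

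For (1) $\Rightarrow$ (2), I would first use the standard Hardy space machinery to produce the boundary value. Given $\sup_{y<0}\norm[p]{F(\cdot+iy)} < \infty$, the function $|F(z)|^p$ is subharmonic, and Jensen's inequality against the Poisson kernel of a half disk shows that $\norm[p]{F(\cdot+iy)}$ is monotone in $|y|$; combined with standard non-tangential maximal function estimates (Hardy–Littlewood for $p>1$), this yields that $F$ has non-tangential limits $f(x)$ for a.e.\ $x\in\Rsp$ and that $f\in L^p(\Rsp)$ with $F(\cdot+iy)\to f$ in $L^p$ as $y\to 0^-$. The key analytic identity comes from the Cauchy integral formula in the lower half plane: since $F$ is holomorphic in $\{y<0\}$ with $F(z)\to 0$ at infinity, for every $z=x+iy$ with $y<0$ one has
\begin{align*}
F(z) = -\frac{1}{2\pi i}\int_{\Rsp} \frac{f(t)}{t-z}\, dt,
\end{align*}
the sign reflecting the orientation of $\partial\{y<0\}$. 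Letting $y\to 0^-$ and applying the Plemelj formula (justified in $L^p$ since $p>1$) gives
\begin{align*}
f(x) = \tfrac{1}{2}f(x) - \tfrac{1}{2\pi i}\,\tx{p.v.}\!\int_{\Rsp}\frac{f(t)}{t-x}\,dt = \tfrac{1}{2}f(x) + \tfrac{1}{2}\Hil f(x),
\end{align*}
using the paper's convention $\Hil f(x) = \frac{1}{i\pi}\tx{p.v.}\!\int \frac{f(y)}{x-y}\,dy$. Rearranging yields $\Hil f = f$.

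For (2) $\Rightarrow$ (1), given $f\in L^p(\Rsp)$ satisfying $\Hil f = f$, I would define
\begin{align*}
F(z) \eqdef -\frac{1}{2\pi i}\int_{\Rsp}\frac{f(t)}{t-z}\,dt, \qquad y<0,
\end{align*}
which is holomorphic in the lower half plane and decays at infinity by dominated convergence. Writing $F(x+iy) = (P_{|y|}*f)(x) + i(Q_{|y|}*f)(x)$ in terms of the Poisson and conjugate Poisson kernels, both convolutions are bounded on $L^p$ uniformly in $y$ (the Poisson part by Young, the conjugate part by the M.\ Riesz theorem on the $L^p$-boundedness of the Hilbert transform for $1<p<\infty$), giving $\sup_{y<0}\norm[p]{F(\cdot+iy)}<\infty$. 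Standard approximate-identity arguments show $F(\cdot+iy)\to \tfrac{1}{2}(f+\Hil f) = f$ in $L^p$ and non-tangentially a.e., so $f$ is indeed the non-tangential boundary value of $F$.

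The only step requiring real care is the justification of the Plemelj limit in $L^p$ and the a.e.\ existence of non-tangential boundary values; the rest is essentially bookkeeping with the Cauchy integral and Hilbert transform. Since this is a well-documented classical fact (exactly the $H^p$ characterization of the lower half plane), I would simply appeal to Titchmarsh \cite{Ti86} rather than reprove it in detail, and only remark that the equation $\Hil f = f$ (as opposed to $\Hil f = -f$) is consistent with the convention in \eqref{eq:HilbertP} and with $F$ being holomorphic in the \emph{lower} half plane.
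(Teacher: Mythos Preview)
The paper does not prove this lemma at all; it simply states it with a citation to Titchmarsh \cite{Ti86}. Your sketch is a correct outline of the classical $H^p$ argument (Cauchy integral, Plemelj jump, Poisson/conjugate Poisson bounds), and your closing remark that one would just cite Titchmarsh is exactly what the paper does.
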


From now on compositions of functions will always be in the spatial variables and we write $f = f(\cdot,t), g = g(\cdot,t), f \compose g(\cdot,t) :=  f(g(\cdot,t),t)$. Let $[A,B] := AB - BA$ be the commutator of the operators $A$ and $B$. We will denote the lower half plane by $\Pminus = \cbrac{(x,y) \in \Rsp^2 \suchthat y<0}$. In this section we will work with both Lagragian and conformal coordinates and we will use the variables $\alpha$ and $\ap$ respectively for these different coordinate systems.

Let the interface be parametrized in Lagrangian coordinates by $\z(\cdot,t): \Rsp \to \partial\Omega(t)$ satisfying $\z_{\al}(\al,t) \neq 0 $ for all  $\al \in \Rsp$. Hence $\zt(\al,t) = u(\z(\al,t),t)$ is the velocity of the fluid on the interface. From \eqref{eq:HeleShaw}, we have on the boundary
\begin{align}\label{eq:HeleShaworigbdry}
\zt = -\grad P
\end{align}
As ${\frac{\zal}{\zalabs}(\al,t) = e^{i\thvar(\al,t)}}$ and $\frac{1}{\zalabs}\pal$ is the arc length derivative in Lagrangian coordinates, the pressure \eqref{eq:surfacetension} on the boundary can be rewritten as
\begin{align*}
P(\z(\al,t),t) = i\sigma \frac{1}{\zal}\pal\frac{\zal}{\zalabs} (\al,t)
\end{align*}
Now 
\begin{align*}
\grad P = n \frac{\partial P}{\partial n} + \tau \frac{\partial P}{\partial \tau}
\end{align*}
where
\begin{align*}
& \tau = \frac{\zal}{\zalabs} = e^{i\thvar} = \tx{unit tangent vector} \\
& n = i\frac{\zal}{\zalabs} = ie^{i\thvar} = \tx{unit outward normal vector}
\end{align*}
Let ${\avar} = - \frac{1}{\abs{\zal}}\frac{\partial P}{\partial n}$ and let $\Dal = \frac{1}{\zal}\pal$. Note that $\frac{\partial}{\partial \tau} = \frac{1}{\zalabs}\pal$ and so
\begin{align*}
\grad P = -i{\avar}\zal + \frac{\zal}{\zalabs} \frac{1}{\zalabs}\pal \brac{ i\sigma \frac{1}{\zal}\pal\frac{\zal}{\zalabs} } = -ia\zal + \frac{1}{\zalbar}\pal \brac{ i\sigma \frac{1}{\zal}\pal\frac{\zal}{\zalabs} }
\end{align*}
Plugging this into \eqref{eq:HeleShaworigbdry} and taking a conjugate we get
\begin{align}\label{eq:ztbar}
\ztbar = -i{\avar}\zalbar -i\sigma\Dal^2\brac{\frac{\zal}{\zalabs}}
\end{align}

Let $\Psi(\cdot,t): \Pminus \to  \Omega(t)$ be conformal maps such that $\Psi$ is differentiable in time with $\lim_{\z\to \infty} \Psi_t(\z,t) =0$. Note that there is an ambiguity in the choice of $\Psi$ due to the scaling and translation symmetries in a conformal mapping, however they will be fixed once we impose the self similar restriction in the next section. Let $\Phi(\cdot,t):\Omega(t) \to \Pminus $ be the inverse of the map $\Psi(\cdot,t)$ and define $\h(\cdot,t):\Rsp \to \Rsp$ as
\begin{align}\label{eq:h}
    \h(\alpha,t) \eqdef \Phi(\z(\alpha,t),t).
\end{align}
For $\ap \in \Rsp$, we define the map $\hinv(\cdot,t): \Rsp \to \Rsp$ as the inverse of $h(\cdot,t)$ and hence
\begin{align*}
    \h(\hinv(\ap,t),t) = \ap.
\end{align*}
Precomposing $\z(\al,t)$ with $\hinv$, we define
\begin{align*}
    Z(\ap,t) & \eqdef z\compose \hinv (\ap,t), \\
    \Zt(\ap,t) &\eqdef \zt\compose \hinv (\ap,t)
\end{align*}
as the re-parametrizations of the position and velocity on the interface under the Riemann mapping. Denote the derivatives in $\ap$ by
\begin{align*}
    \Zap(\ap,t) = \pap \Z(\al,t) \quad \text{  and  } \quad  \Ztap(\ap,t) = \pap \Zt(\al,t).
\end{align*}
With the above conventions, we have $\Psi\compose \h(\al,t) = \z(\al,t)$, and hence, it follows that
\begin{align*}
\Psi(\al;t) &= \Psi(\h(\hinv(\al,t);t);t)\\
&= \z(\hinv(\al,t);t)\\
&= \Z(\al,t).
\end{align*}
Thus, by the chain rule,
\begin{align}
\Zap(\ap,t) \eqdef \pap \Z(\ap,t) & = \pap (\z\compose \hinv(\al,t)) \nonumber\\
&= \zal\compose \hinv (\al,t) \cdot \frac{1}{\hal\compose \hinv(\al,t)}. \label{eq:Zaphapzap}
\end{align}

Now let $\Dap = \frac{1}{\Zap} \pap$ and let $\Asigma$ be the real valued function defined as
\begin{align}\label{eq:Asigma}
\Asigma = \Zapabs^2(a \hal)\compose \hinv .
\end{align}
Hence precomposing \eqref{eq:ztbar} with $\hinv$ we get
\begin{align}\label{eq:Ztbartemp1}
\Ztbar = -i \frac{\Asigma}{\Zap} -i\sigma\Dap^2\frac{\Zap}{\Zapabs}
\end{align}
Multiplying by $\Zap$ and rearranging gives us
\begin{align*}
\Asigma = i\Ztbar\Zap  -\sigma\pap\Dap\frac{\Zap}{\Zapabs}
\end{align*}
As the velocity is both divergence and curl free, this implies that $\ubar$ is holomorphic in $\Omega(t)$. Hence $\ubar\compose \Psi$ is holomorphic on $\Pminus$ and so $\Ztbar$ is the boundary value of a holomorphic function on $\Pminus$. Hence $\Ztbar\Zap$ is the boundary value of a holomorphic function. We now assume that both $\Ztbar\Zap$ and $\sigma\pap\Dap\frac{\Zap}{\Zapabs}$ go to $0$ as $\abs{\ap} \to \infty$. Now applying $\Real(\Id - \Hil)$ to the above equation gives us
\begin{align}\label{eq:Agsigma}
\Asigma =  - \sigma\Real(\Id - \Hil)\pap\Dap\frac{\Zap}{\Zapabs} = \sigma\pap\Hil\Dap\frac{\Zap}{\Zapabs}
\end{align}
Plugging this in \eqref{eq:Ztbartemp1} gives
\begin{align*}
\Ztbar  = -i\sigma\Dap(\Id + \Hil)\Dap\frac{\Zap}{\Zapabs}
\end{align*}
Let $\Th = -i(\Id + \Hil)\Dap\frac{\Zap}{\Zapabs}$. Note that $\Real\Th  $ equals the curvature of the interface and $\Th $ is the boundary value of a holomorphic function. Using this variable we get
\begin{align}\label{eq:Ztbarmain}
\Ztbar = \sigma\Dap\Th
\end{align}

We define the material on the boundary $\Dt$ as
\begin{align*}
 \Dt = \pt + \bvar \pap \quad \tx{ where } \bvar = \h_t \compose \hinv
\end{align*}
Note that $\bvar$ is real valued. With this definition we have $\Dt \Z = \Zt$ and more generally $\Dt (f(\cdot,t)\compose \hinv) = (\pt f(\cdot,t)) \compose \hinv$ or equivalently $\pt (F(\cdot,t)\compose \h) = (\Dt F(\cdot,t)) \compose \h$. Now dividing both sides of \eqref{eq:Ztbarmain} by $\Zapbar$ we get
\begin{align*}
	\frac{1}{\Zapbar}\partial_{t}\Zbar + b &= \frac{\sigma \pap \Theta}{\Zapabs^{2}}
\end{align*}
We now assume that both $\frac{1}{\Zapbar}\partial_{t}\Zbar$ and $\frac{\sigma \pap \Theta}{\Zapabs^{2}}$ go to $0$ as $\abs{\ap} \to \infty$. Taking the imaginary part on both sides we get
\begin{align*}
\Imag\brac{\frac{1}{\Zapbar}\partial_{t}\Zbar} = \Imag\brac{\frac{\sigma \pap \Theta}{\Zapabs^{2}}}
\end{align*}
As $\frac{1}{\Zapbar}\partial_{t}\Zbar$ is the boundary value of an anti-holomorphic function, we have that $\Hil\brac{\frac{1}{\Zapbar}\partial_{t}\Zbar} = - \frac{1}{\Zapbar}\partial_{t}\Zbar$ and hence $i\Hil \brac{\Imag\brac{\frac{1}{\Zapbar}\partial_{t}\Zbar}} = -\Real\brac{\frac{1}{\Zapbar}\partial_{t}\Zbar}$. Therefore applying the operator $i(\Id - \Hil)$ on both sides we get 
\begin{align}\label{eq:dtZbar}
	\frac{1}{\Zapbar}\partial_{t}\Zbar &= i(I-\Hil)\left\{\Imag\left(\frac{\sigma\pap\Th}{\Zapabs^2}\right)\right\}.
\end{align}


\section{Main results}\label{sec:mainresults}

In this section, we will state the main result of this paper. To do so, we first define the notion of self similar solutions to the interface equation \eqref{eq:dtZbar}.

From now on we will fix $\sigma = 1$. Define for $\lamb > 0$
\begin{align}\label{eq:scaling}
Z_{\lambda}(\ap,t) = \lambda^{-a}Z(\lambda\ap,\lambda^b t)
\end{align}
Then we see that
\begin{align*}
(\Z_\lamb)_{\ap}(\ap,t) &= \lambda^{-a+1}\Zap(\lambda\ap,\lambda^{b} t) \\
\Theta_{\lambda}(\ap,t) & = \lambda^{a}\Theta(\lambda\ap,\lambda^{b} t) \\
\partial_{t}(Z_\lamb)(\ap,t) &= \lambda^{-a+b}\partial_{t}Z(\lambda\ap,\lambda^{b} t)
\end{align*}
Now assume that $\Z(\ap,t)$ is a solution to \eqref{eq:dtZbar} with $\sigma = 1$. Then $\Z_\lamb(\ap,t)$ is also a solution if
\begin{align*}
\lambda^{b-1}	\frac{1}{\Zapbar(\lambda\ap,\lambda^{b} t)}\partial_{t}\Zbar(\lambda\ap,\lambda^{b} t) = \lambda^{3a-1} i(I-\Hil)\left\{\Imag\left(\frac{\sigma\pap\Th}{\Zapabs^2}\right)\right\}(\lambda\ap,\lambda^{b} t)
\end{align*}
which gives us $b = 3a$. Hence our main equation \eqref{eq:Ztbarmain} is invariant under the scaling transformation \eqref{eq:scaling} with $b = 3a$. Note that there are no restrictions right now on the value of $a$. We also easily observe that the equation  \eqref{eq:Ztbarmain} is invariant under the transformation
\begin{align}\label{eq:timescaling}
Z(\ap,t) \to \beta Z(\ap,\beta^{-3}t)
\end{align}
for any $\beta >0$.

We now assume that $Z$ is a self  similar solution i.e. $Z_\lamb(\ap,t) = Z(\ap,t)$ for all $\lamb > 0$. This gives us
\begin{align*}
Z(\ap,t) = \lambda^{-a}Z(\lambda\ap,\lambda^b t)
\end{align*}
By choosing
\begin{equation*}
\lamb^bt = \lambda^{3a}t  = 1.
\end{equation*}
we get $\lambda = t^{-\frac{1}{3a}}$. Therefore for $t>0$ we have
\begin{align*}
Z(\ap,t) = t^{\frac{1}{3}}Z(t^{-\frac{1}{3a}}\ap,1)
\end{align*}

At $t=1$, we now assume that $Z(\ap,1)$ is the graph of a smooth Lipschitz function. Further, asymptotically, as $\ap\rightarrow \pm \infty$, we assume at $t=1$ that the angle of the unit tangent vector $\tau$ of the interface with respect to the positive $x -axis$ tends to $\pm\ep$  respectively for some fixed $\ep\in (-\pi/2,\pi/2)$. 

From the above formula, this implies for all $t>0$ that $Z(\cdot,t)$ is the graph of a Lipschitz function. To fix the scaling factor in the Riemann mapping, we impose (recall that the branch cut of log is the positive imaginary axis)
\begin{align}\label{eq:Zasymptoticst1}
	Z(\ap,1) &\simeq e^{i\ep} \ap^{\frac{\pi+2\ep}{\pi}} \qquad \text{as} \qquad \abs{\ap}\rightarrow \infty
\end{align}
This implies that for $t>0$ we have
\begin{align}\label{eq:Zasymptotics}
	Z(\ap,t) &\simeq e^{i\ep}t^{\frac{1}{3}}(t^{-\frac{1}{3a}}\ap)^{\frac{\pi+2\ep}{\pi}} \qquad \text{as}  \qquad \abs{\ap}\rightarrow \infty
\end{align}
With this choice we see that the only ambiguity in the choice of Riemann map is the translation freedom at $t = 1$, which does not affect the analysis in this paper in any way. 

Now observe that to obtain a finite nontrivial limit for $Z(\ap,t)$ as $t \to 0$, we need
\begin{equation*}
\frac{1}{3} - \frac{1}{3a}\frac{\pi+2\ep}{\pi} = 0
\end{equation*}
which implies that
\begin{equation}\label{eq:a}
	a = \frac{\pi+2\ep}{\pi} = 1 +\frac{2\ep}{\pi} > 0 .
\end{equation}
We will call 
\begin{equation*}
	\eta(\ap) = Z(\ap,1)
\end{equation*}
and so
\begin{align}\label{eq:etadefinition}
	Z(\ap,t) = t^{\frac{1}{3}}\eta(t^{-\frac{1}{3a}}\ap)
\end{align}
for the value of $a$ in \eqref{eq:a}.

We can now state our main theorem:
\begin{thm}\label{thm:mainselfsimilarresult}
There exists $\ep_0>0$ such that if  $\ep \in [-\ep_0, \ep_0]$, then there exists a self-similar solution $Z: \Rsp \times [0,\infty) \to \Csp$ to \eqref{eq:dtZbar} such that
\begin{enumerate}
\item For $t>0$, $Z(\ap,t)$ is a smooth function with asymptotics given by \eqref{eq:Zasymptotics}.
\item At $t=0$ we have
\begin{align*}
Z(\ap,0) = 
\begin{cases}
e^{i\ep}\abs{\ap}^{1 + \frac{2\ep}{\pi}} \qq & \tx{ for } \ap \geq 0 \\
e^{-i(\pi + \ep)}\abs{\ap}^{1 + \frac{2\ep}{\pi}} \qq & \tx{ for } \ap < 0 
\end{cases}
\end{align*}
and $\lim_{t \to 0^+} Z(\ap, t) = Z(\ap,0)$ for all $\ap \in \Rsp$. 
\end{enumerate}
\end{thm}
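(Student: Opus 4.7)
The plan is to convert the PDE into a single nonlocal equation for the profile $\eta$ defined by \eqref{eq:etadefinition} and solve it perturbatively for small $\ep$. Substituting the self-similar ansatz $Z(\ap,t) = t^{1/3}\eta(t^{-1/(3a)}\ap)$ with $a = 1 + 2\ep/\pi$ into \eqref{eq:dtZbar} and letting the $t$-factors cancel produces a third-order nonlocal equation on $\Rsp$ of the schematic form
\begin{equation*}
\frac{1}{3}\,\overline{\eta}\,-\,\frac{1}{3a}\ap\,\overline{\eta}_\ap \;=\; \overline{\eta}_\ap \cdot i(\Id - \Hil)\!\left\{\Imag\!\left(\frac{\pap \Th(\eta)}{\abs{\eta_\ap}^2}\right)\right\},
\end{equation*}
where $\Th(\eta) = -i(\Id + \Hil)\frac{1}{\eta_\ap}\pap(\eta_\ap/\abs{\eta_\ap})$ and the Hilbert transform is the only nonlocal operator appearing. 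The conformal coordinate framework of the excerpt is chosen precisely so that this reduction is explicit.

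Next I would split $\eta(\ap) = \eta_0(\ap) + w(\ap)$, where $\eta_0$ is an explicit approximate self-similar profile that realizes the corner at $\ap = 0$ and matches the prescribed asymptotics \eqref{eq:Zasymptoticst1} (with the reflected counterpart for $\ap < 0$), and $w$ is a correction to be solved for. The natural candidate for $\eta_0$ is the exact solution to the associated linear problem announced in the introduction; one can think of it as a smoothed version of $e^{i\ep}(\ap+i)^{a}$ on $\ap>0$ glued to $-e^{-i\ep}(-\ap+i)^{a}$ on $\ap<0$. Substituting this decomposition and linearizing at $\eta_0$ yields an equation of the form
\begin{equation*}
\mathcal{L}_0\, w \;=\; F_\ep \;+\; Q(\ep,w),
\end{equation*}
where $\mathcal{L}_0$ is the third-order nonlocal linear operator obtained by differentiating the right-hand side at $\eta_0$, the forcing $F_\ep$ is of size $O(\ep)$, and $Q$ collects the genuinely quadratic and higher nonlinear terms in $w$.

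The core step is to invert $\mathcal{L}_0$ on a scale of weighted Sobolev spaces adapted to both the corner at $\ap=0$ and the polynomial growth of $\eta_0$ at infinity. Because the only nonlocal operator is $\Hil$, a conjugation by an appropriate power weight should reduce the principal part of $\mathcal{L}_0$ to a constant-coefficient Fourier multiplier whose symbol is a linear combination of $\abs{\xi}^3$ and $\sgn(\xi)\abs{\xi}^3$ (using $\Hil \pap = -i\Dabs$ repeatedly), and one should verify that this symbol is elliptic in the sense of being bounded below. The remaining lower-order and variable-coefficient pieces will be treated as perturbations via commutator estimates between $\Hil$, $\pap$, and multiplication by powers of $\ap$. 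With a uniform bound of the form $\norm{\mathcal{L}_0^{-1} F}_X \lesssim \norm{F}_Y$ in hand, a standard contraction mapping argument on a ball $\norm{w}_X \lesssim \ep$ closes the construction of $\eta$. The conclusions of the theorem then follow: smoothness for $t>0$ comes from elliptic regularity for $\mathcal{L}$ together with the scaling \eqref{eq:etadefinition}, the asymptotics \eqref{eq:Zasymptotics} are built into the choice of $\eta_0$, and passing to the limit $t \to 0^+$ in \eqref{eq:etadefinition} produces the stated initial corner profile.

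The hardest step will be the linear analysis, as flagged in the introduction: the coefficients of $\mathcal{L}_0$ grow like $\ap^{2a}$ at infinity (from the $1/\abs{\eta_{0,\ap}}^2$ factor multiplying three derivatives), which places the problem outside the scope of classical pseudodifferential or Fourier multiplier theory. I expect the bulk of the work to consist of designing weighted norms in which both the growing coefficients and the singular behavior at $\ap = 0$ contribute boundedly, then establishing coercivity of $\mathcal{L}_0$ in those norms, probably by splitting into low and high frequency regimes and treating the polynomial weights through conjugation plus commutator remainders. Once this quantitative invertibility of $\mathcal{L}_0$ is available with bounds independent of small $\ep$, the remaining nonlinear and verification steps are largely mechanical.
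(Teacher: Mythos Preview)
Your overall architecture (reduce to a profile equation, linearize around an approximate solution, invert the linear operator, contract, then bootstrap regularity) matches the paper. Two points, however, deserve correction: the choice of unknown, and---more seriously---the strategy for the linear analysis.

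On the unknown: the paper does not perturb $\eta$ directly. It writes $\eta_x = e^{f+ig}$ with $f = i\Hil g$, so that $\eta_\ap/\abs{\eta_\ap} = e^{ig}$, and after taking an imaginary part obtains a scalar real equation for $g$ alone (equation \eqref{eq:selfsimilar}). The decomposition is $g = \ep G + u$, where $G$ is the \emph{exact} solution of the linearized equation $i\Hil G_{xxx} + \tfrac{x}{3a}G_x = 0$ with $G \to \pm 1$; this $G$ is completely explicit, $\widehat{G_x}(\xi) = \sqrt{2/\pi}\, e^{-a\abs{\xi}^3}$, which is rather different from your guess $e^{i\ep}(\ap+i)^a$. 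After a further substitution $v = e^{-\ep i\Hil G}u$, the linear operator becomes $\mathcal{L}v = i\Hil v_{xxx} + w\tfrac{x}{3a}v_x$ with $w = e^{3\ep i\Hil G}$, so the third-order part already has constant coefficient; the growing coefficient (essentially $x$, not $\ap^{2a}$) sits on the \emph{first}-order term.

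On the linear analysis: your plan to conjugate by a power weight and then treat the variable-coefficient pieces as commutator perturbations does not close here. The term $\tfrac{x}{3a}v_x$ is the scaling generator; no conjugation by a weight turns $x\partial_x$ into a constant-coefficient operator, and $x\partial_x$ is not dominated by $\abs{D}^3$ on any weighted Sobolev space, so it cannot be absorbed as a remainder. The paper instead uses its sign: integrating by parts,
\[
\int v\,\mathcal{L}v\,dx \;=\; -\norm[\dot H^{3/2}]{v}^2 \;-\; \tfrac{1}{6a}\!\int (xw)_x\,v^2\,dx \;\le\; -\norm[\dot H^{3/2}]{v}^2 - c\norm[L^2_w]{v}^2,
\]
which gives coercivity of the associated bilinear form on $L^2_w\cap \dot H^{3/2}$. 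Existence of a weak solution then follows from Lax--Milgram (after adding a regularizing term $\delta\norm[L^2_w]{xv_x}^2$ to land in a genuine Hilbert space and passing $\delta\to 0$). The missing decay $xv_x\in L^2_w$ and regularity $v\in\dot H^3$ are recovered a posteriori by testing the mollified equation against $x v_x$ and $\abs{D}^3 v$ respectively; uniqueness also comes from the coercivity. In short, the growing lower-order coefficient has to be \emph{used} for a sign, not absorbed as a perturbation---that is the idea your outline is missing.
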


The self-similar solution of the above theorem gives rise to a solution of the Hele-Shaw equation with surface tension given by \eqref{eq:HeleShaw} and \eqref{eq:surfacetension}, as we will see from the argument given at the end of Section \ref{sec:fullequation}. The rest of the paper is devoted to the proof of Theorem \ref{thm:mainselfsimilarresult}.

\section{Equation for Self-Similarity}\label{sec:selfsimilareq}

We want to derive the equation for $\eta$ by plugging \eqref{eq:etadefinition} into \eqref{eq:dtZbar}. We will define the variable $x = t^{-\frac{1}{3a}}\ap$:
\begin{align*}
	Z(\ap,t) = t^{\frac{1}{3}}\eta(x), \quad \partial_{t}x = -t^{-1}  \frac{x}{3a}, \quad \partial_{\ap}x=t^{-\frac{1}{3a}}
\end{align*}
We compute the following relevant terms:
\begin{enumerate}[label=(\emph{\alph*})]
\item \begin{align*}
			\partial_{t}\Zbar(\ap,t) &= \partial_{t}\left(t^{\frac{1}{3}}\bar{\eta}(x)\right)\\
			&=  \frac{1}{3}t^{-\frac{2}{3}}\left( \bar{\eta}(x)- \frac{x}{a}\bar{\eta}_{x}(x)\right)
	\end{align*}

\item \begin{align*}
	\Zap(\ap,t) &= \pap\left(t^{\frac{1}{3}}{\eta}(x)\right)\\
	&= t^{\frac{1}{3}}t^{-\frac{1}{3a}}\eta_{x}(x)
	\end{align*}
	
\item
	
	\begin{align*}
			\frac{1}{\Zapbar(\ap,t)}\partial_{t}\Zbar(\ap,t) &=  \frac{1}{3}t^{-1}t^{\frac{1}{3a}}\frac{1}{\bar{\eta}_{x}(x)}\left( \bar{\eta}(x) - \frac{x}{a}\bar{\eta}_{x}(x)\right)
	\end{align*}
	
\item \begin{align*}
	\Dap\frac{\Zap}{\Zapabs}(\ap,t) &= \frac{1}{ t^{\frac{1}{3}}t^{-\frac{1}{3a}}\eta_{x}(x)}\pap \left(\frac{\eta_{x}(x)}{\abs{\eta_{x}(x)}}\right)\\
	&= t^{-\frac{1}{3}} \frac{1}{\eta_{x}(x)}\frac{d}{dx} \left(\frac{\eta_{x}(x)}{\abs{\eta_{x}(x)}}\right)
	\end{align*}
	
\item \begin{align*}
		\pap\Th(\ap,t) &= \pap\left( -i(\Id + \Hil)\cbrac{ t^{-\frac{1}{3}} \frac{1}{\eta_{x}(x)}\frac{d}{dx} \left(\frac{\eta_{x}(x)}{\abs{\eta_{x}(x)}}\right)}\right)\\
		&= -it^{-\frac{1}{3}}t^{-\frac{1}{3a}}\frac{d}{dx}(\Id + \Hil)\cbrac{ \frac{1}{\eta_{x}(x)}\frac{d}{dx} \left(\frac{\eta_{x}(x)}{\abs{\eta_{x}(x)}}\right)}
	\end{align*}
	
\item \begin{align*}
		\frac{\pap\Th}{\Zapabs^2}(\ap,t) &= -it^{-1}t^{\frac{1}{3a}}\frac{1}{\abs{\eta_{x}(x)}^{2}}\frac{d}{dx}(\Id + \Hil)\cbrac{ \frac{1}{\eta_{x}(x)}\frac{d}{dx} \left(\frac{\eta_{x}(x)}{\abs{\eta_{x}(x)}}\right)}
	\end{align*}
\end{enumerate}

From the above calculations, we obtain the following formula from \eqref{eq:dtZbar}:

\begin{align}\label{eq:etaselfsimilar}
\frac{1}{3}\frac{1}{\bar{\eta}_{x}}\left( \bar{\eta}- \frac{x}{a}\bar{\eta}_{x}\right) &= i(I-\Hil)\cbrac{\Imag\left(-i\frac{1}{\abs{\eta_{x}}^{2}}\frac{d}{dx}(\Id + \Hil)\cbrac{ \frac{1}{\eta_{x}}\frac{d}{dx} \left(\frac{\eta_{x}}{\abs{\eta_{x}}}\right)}\right)} \nonumber\\
&=-i(I-\Hil)\cbrac{\Real\left(\frac{1}{\abs{\eta_{x}}^{2}}\frac{d}{dx}(\Id + \Hil)\cbrac{ \frac{1}{\eta_{x}}\frac{d}{dx} \left(\frac{\eta_{x}}{\abs{\eta_{x}}}\right)}\right)}
\end{align}
As $\eta_{x}$ is the boundary value of the derivative of a Riemann map,  we define real-valued functions $f$ and $g$ by the relation 
\begin{align}\label{def:g}
	\eta_{x} = e^{f+ig}
\end{align}
with the condition that $g$ is continuous and tends to $\pm \ep$ as $x \rightarrow \pm \infty$. We have that
\begin{align*}
\eta_{xx}= (f_{x}+ig_{x})\eta_{x}, \quad	\frac{\bar{\eta}_{xx}}{\bar{\eta}_{x}} &= f_{x} - i g_{x}.  
\end{align*}
Using the above, we will derive an equation in terms of $f$ and $g$ from \eqref{eq:etaselfsimilar}.

To do so, we first apply the following sequence of operations to both sides of \eqref{eq:etaselfsimilar}: first multiply by $\bar{\eta}_x$, then differentiate, then divide by $\bar{\eta}_{x}$. The left hand side of the equation becomes
\begin{align*}
	\frac{1}{3}\frac{1}{\bar{\eta}_{x}}\frac{d}{dx}\brac{ \bar{\eta}- \frac{x}{a}\bar{\eta}_{x}} &= \frac{1}{3}-\frac{1}{3a} - \frac{x}{3a}\frac{\bar{\eta}_{xx}}{\bar{\eta}_{x}}\\
	&= \frac{1}{3}-\frac{1}{3a} - \frac{x}{3a}f_{x} + i\frac{x}{3a}g_{x}.
\end{align*}
Computing similarly on the right hand side, we obtain that
\begin{align}\label{eq:18'}
\begin{aligned}
 & \frac{1}{3}-\frac{1}{3a} - \frac{x}{3a}f_{x} + i\frac{x}{3a}g_{x} \\
 &=\frac{-i}{\bar{\eta}_{x}}\frac{d}{dx}\brac{\bar{\eta}_{x}(I-\Hil)\cbrac{\Real\left(\frac{1}{\abs{\eta_{x}}^{2}}\frac{d}{dx}(\Id + \Hil)\cbrac{ \frac{1}{\eta_{x}}\frac{d}{dx} \left(\frac{\eta_{x}}{\abs{\eta_{x}}}\right)}\right)}}.
\end{aligned}
\end{align}

Finally take the imaginary part of the resulting term. The left hand side of \eqref{eq:18'} becomes
\begin{align*}
\Imag\sqbrac{ \frac{1}{3}-\frac{1}{3a} - \frac{x}{3a}f_{x} + i\frac{x}{3a}g_{x}}
&= \frac{x}{3a}g_{x}
\end{align*}
The right hand side of \eqref{eq:18'} becomes
\begingroup
\allowdisplaybreaks
\begin{align*}
&\Imag\sqbrac{\frac{-i}{\bar{\eta}_{x}}\frac{d}{dx}\brac{\bar{\eta}_{x}(I-\Hil)\cbrac{\Real\left(\frac{1}{\abs{\eta_{x}}^{2}}\frac{d}{dx}(\Id + \Hil)\cbrac{ \frac{1}{\eta_{x}}\frac{d}{dx} \left(\frac{\eta_{x}}{\abs{\eta_{x}}}\right)}\right)}}}\\
&= \Imag\sqbrac{-i\frac{\bar{\eta}_{xx}}{\bar{\eta}_{x}}(I-\Hil)\cbrac{\Real\left(\frac{1}{\abs{\eta_{x}}^{2}}\frac{d}{dx}(\Id + \Hil)\cbrac{ \frac{1}{\eta_{x}}\frac{d}{dx} \left(\frac{\eta_{x}}{\abs{\eta_{x}}}\right)}\right)}}\\*
&\quad -\frac{d}{dx}\cbrac{\Real\left(\frac{1}{\abs{\eta_{x}}^{2}}\frac{d}{dx}(\Id + \Hil)\cbrac{ \frac{1}{\eta_{x}}\frac{d}{dx} \left(\frac{\eta_{x}}{\abs{\eta_{x}}}\right)}\right)}\\
&= \Imag\sqbrac{-i(f_{x}-ig_{x})(I-\Hil)\cbrac{\Real\left(e^{-2f}\frac{d}{dx}(\Id + \Hil)\cbrac{ ie^{-f}g_{x}}\right)}} \\*
&\quad -\frac{d}{dx}\cbrac{\Real\left(e^{-2f}\frac{d}{dx}(\Id + \Hil)\cbrac{ie^{-f}g_{x}}\right)}\\
&= \Imag\sqbrac{-i(f_{x}-ig_{x})(I-\Hil)\cbrac{e^{-2f}i \Hil\frac{d}{dx}\cbrac{ e^{-f}g_{x}}}} -\frac{d}{dx}\cbrac{e^{-2f} i\Hil\frac{d}{dx}\cbrac{e^{-f}g_{x}}}\\
&= if_{x}\cbrac{e^{-2f} \Hil\frac{d}{dx} \cbrac{e^{-f}g_{x}}} +g_{x}\Hil\cbrac{e^{-2f}\Hil\frac{d}{dx}\cbrac{ e^{-f}g_{x}}}  -ie^{-2f} \Hil\frac{d^{2}}{dx^{2}}\cbrac{e^{-f}g_{x}}
\end{align*}
\endgroup
Thus,
\begin{align}\label{eq:xgxselfsimilar}
	 \frac{x}{3a}g_{x} &=  if_{x}\cbrac{e^{-2f} \Hil\frac{d}{dx} \cbrac{e^{-f}g_{x}}} +g_{x}\Hil\cbrac{e^{-2f}\Hil\frac{d}{dx}\cbrac{ e^{-f}g_{x}}}  -ie^{-2f} \Hil\frac{d^{2}}{dx^{2}}\cbrac{e^{-f}g_{x}}
\end{align}
Hence, we have the following equation for $f$ and $g$:
\begin{align}\label{eq:selfsimilar}
i\Hil\frac{d^{2}}{dx^{2}}\cbrac{e^{-f}g_{x}} + e^{2f}\frac{x}{3a}g_{x} &=  if_{x} \Hil\frac{d}{dx} \cbrac{e^{-f}g_{x}} + e^{2f}g_{x}\Hil\cbrac{e^{-2f}\Hil\frac{d}{dx}\cbrac{ e^{-f}g_{x}}} 
\end{align}

\subsection{Linear approximation}

In this subsection, we will begin the analysis of equation \eqref{eq:selfsimilar}. We know that $g\rightarrow \pm \ep$ as $x\rightarrow \pm \infty$. We express $g$ as 
\begin{align}\label{def:gdecompose}
	g = \ep G + u
\end{align}
where $G \to \pm1$ as $x \to \pm\infty$ and  $u \rightarrow 0$ as $x\rightarrow \pm \infty$. As a first step, we find an appropriate choice for $G$. We choose $G$ to be an odd function satisfying the linearized equation from \eqref{eq:selfsimilar}:
\begin{align}\label{eq:linearG}
  i \Hil G_{xxx}+\frac{x}{3a}G_{x} &= 0
\end{align}
Applying the Fourier transform to \eqref{eq:linearG}, we obtain the following equation for $\widehat{G_{x}}(\xi)$:
\begin{align*}
3a\sgn(\xi)\xi^2\widehat{G_{x}} + \frac{d}{d\xi}\widehat{G_{x}} &= 0
\end{align*}
For $\xi > 0$, we obtain the solution
\begin{align*}
\widehat{G_{x}}(\xi) &= C_{+}e^{-a\xi^{3}} = C_{+}e^{-a\abs{\xi}^{3}}.
\end{align*}
for some constant $C_{+} \in \Rsp$. For $\xi < 0$, we obtain the solution
\begin{align*}
	\widehat{G_{x}}(\xi) &= C_{-}e^{a\xi^{3}} = C_{-}e^{-a\abs{\xi}^{3}}.
\end{align*}
for some constant $C_{-} \in \Rsp$. Because  $G \to \pm1$ as $x \to \pm\infty$, observe that
\begin{align*}
	\widehat{G_{x}}(0) &= \frac{1}{\sqrt{2\pi}}\int_{-\infty}^{\infty} G_{x}(x) dx = \frac{\sqrt{2}}{\sqrt{\pi}}.
\end{align*}
Hence, for all $\xi\in\mathbb{R}$, we have the appropriate choice for $G$ as
\begin{align}\label{eq:Gsolution}
		\widehat{G_{x}}(\xi) &= \frac{\sqrt{2}}{\sqrt{\pi}}e^{-a\abs{\xi}^{3}}
\end{align}

\begin{prop}\label{prop:Gproperties}
Let $0 \leq \abs{\ep} \leq 1/2$.  Then the odd function $G$ defined by the relation \eqref{eq:Gsolution} satisfies the following properties:
\begin{enumerate}
\item For any $n \geq 1$ we have $ \abs{\partial_{x}^{n}G(x)} \lesssim_{n} (1+ \abs{x})^{-2-n}$ for all $x \in \Rsp$. In particular we have that $G_x \in H^\infty(\Rsp)$. 
\item For any $n \geq 1$ we have $\abs{\partial_{x}^{n}\Hil G(x)} \lesssim_{n} (1+ \abs{x})^{-n}$ for all $x \in \Rsp$. 
\item There exists $C_1 \in \Rsp$ and $V \in H^{\infty}(\mathbb{R})$ such that for all $x \in \Rsp$ we have
\begin{align*}
\int_{0}^{x} i\Hil G_{x}(y)\diff y = \frac{1}{\pi}\log(a^{2}+x^{2}) + C_1 + V(x)
\end{align*}
\end{enumerate}
\end{prop}
\begin{proof}
First observe from \eqref{eq:a} and $0 \leq \abs{\ep} \leq 1/2$ that we have $1/2 \leq a \leq 3/2$. We now prove the claims sequentially:
\begin{enumerate}
\item For $n \geq 1$ we have that
\begin{align*}
	\widehat{\partial_{x}^{n}G}(\xi) &= \brac{i\xi}^{n-1}\widehat{G_{x}}(\xi) = \brac{i\xi}^{n-1}\frac{\sqrt{2}}{\sqrt{\pi}}e^{-a\abs{\xi}^{3}}.
\end{align*}
Now observe that for any $0 \leq k \leq n+2$ we have
\begin{align*}
\norm[1]{\frac{d^{k}}{d\xi^{k}}\brac{\brac{i\xi}^{n-1}\frac{\sqrt{2}}{\sqrt{\pi}}e^{-a\abs{\xi}^{3}}}} \lesssim_{n} 1
\end{align*}
This implies that
\begin{align*}
\norm[\infty]{(1+ \abs{x})^{2 + n} \partial_{x}^{n}G(x)} \lesssim_n 1
\end{align*}
This proves the first property. 
\item For $n \geq 1$ we have
\begin{align*}
	\widehat{i\Hil \partial_{x}^{n}G}(\xi) &= -i^{n}\xi^{n-1}\frac{\sqrt{2}}{\sqrt{\pi}}\sgn(\xi)e^{-a\abs{\xi}^{3}}\\
	&= -i^{n}\xi^{n-1}\frac{\sqrt{2}}{\sqrt{\pi}}\sgn(\xi)e^{-a\abs{\xi}} -i^{n}\xi^{n-1}\frac{\sqrt{2}}{\sqrt{\pi}}\sgn(\xi)\brac{e^{-a\abs{\xi}^{3}}-e^{-a\abs{\xi}}}.
\end{align*}
Let the inverse Fourier transform of the second term on the right hand side of the above equation be $v_n(x)$. Observe that for any $0 \leq k \leq n+1$ we have
\begin{align*}
\norm[1]{\frac{d^{k}}{d\xi^{k}}\cbrac{\xi^{n-1}\sgn(\xi)\brac{e^{-a\abs{\xi}^{3}}-e^{-a\abs{\xi}}}}} \lesssim_n 1
\end{align*}
and hence we see that
\begin{align*}
\norm[\infty]{(1+\abs{x})^{n+1} v_n(x) } \lesssim_n 1
\end{align*}
Since
\begin{align*}
\frac{\sqrt{2}}{\sqrt{\pi}}\widehat{\frac{x}{a^{2}+x^{2}}} &= -i\sgn(\xi)e^{-a\abs{\xi}},
\end{align*}
we have that
\begin{align*}
-i^{n}\xi^{n-1}\frac{\sqrt{2}}{\sqrt{\pi}}\sgn(\xi)e^{-a\abs{\xi}} &= \Fcal\brac{\partial_{x}^{n-1}\brac{\frac{2}{\pi}\frac{x}{a^{2}+x^{2}}}}.
\end{align*}
The above inequalities directly yields the required property. 
\item From the above calculation we observe that
\begin{align*}
\int_{0}^{x} i\Hil G_{x}(y)\diff y &=\int_{0}^{x} \brac{\frac{2}{\pi}\frac{y}{a^{2}+y^{2}} + v_{1}(y)} \diff y \\
&= \frac{1}{\pi}\log(a^{2} + x^{2}) + C_{1} + V(x)
\end{align*}
where 
\begin{align*}
C_1 =- \frac{1}{\pi}\log(a^{2}) + \int_{0}^{\infty} v_{1}(y) \diff y
\end{align*}
and
\begin{align*}
	V(x) = -\int_{x}^{\infty} v_{1}(y) \diff y.
\end{align*}
It follows from the decay properties of $\hat{v}_1$ and $v_1$ that $V \in H^{\infty}$. 
\end{enumerate}
\end{proof}

As $G$ is an $\Linfty$ function with $G \to \pm 1$ as $x \to \pm \infty$, we know that $\Hil G \in BMO$, where $BMO$ is the space of bounded mean oscillations which are functions modulo constants. It would be helpful to identify $\Hil G$ with a function, so using the last property of \propref{prop:Gproperties} we fix the value of constant of $\Hil G(0)$ so that we have
\begin{align}\label{eq:HilG}
i\Hil G (x) =  \frac{1}{\pi}\log(a^{2} + x^{2}) + \frac{1}{\ep}\ln\brac{1 + \frac{2\ep}{\pi}} + V(x)
\end{align}
Note that we have $\ln(\eta_x) = f+ ig$ from \eqref{def:g} and so $f+ig$ is the boundary value of a holomorphic function on the lower half plane. From the asymptotics of $Z(\ap,1)$ in \eqref{eq:Zasymptoticst1} and the formulae \eqref{eq:HilG}, \eqref{def:gdecompose} we see that 
\begin{align*}
\brac{(f+ig) - i\ep(G + \Hil G)}(x) \to 0 \qq \tx{ as } \abs{x} \to \infty
\end{align*}
As $\Hil(\Hil G) = G$ we therefore have $\Hil(f+ig) = f+ig$ and so $f = i\Hil g$ and $g = -i\Hil f$.

We define the weight $w: \Rsp \to (0,\infty)$ as
\begin{align}\label{def:w}
w  = e^{3i\ep\Hil G}
\end{align}
Using this weight, we define the weighted $L^{2}$ norm of a function $v: \Rsp \to \Csp$ as follows
\begin{align*}
	\norm[L^{2}_{w}]{v} &= \brac{\int_{\mathbb{R}} \abs{v(x)}^{2} w(x) \diff x}^{\frac{1}{2}} = \norm[L^{2}]{w^\half  v}.
\end{align*}

We now prove some basic properties of this weight. 

\begin{prop}\label{prop:weightproperties}
There exists $0< \ep_1 \leq 1/100$ such that for all $0 \leq \abs{\ep} \leq \ep_1$ we have the following:
\begin{enumerate}
\item The constant $a$ defined by \eqref{eq:a} satisfies $1/2 \leq a \leq 3/2$.
\item We have $\ep_1\norm[L^{\infty}]{ x\Hil G_{x}} \leq 1/6$.
\item For all $x \in \Rsp$ we have
\begin{align*}
\brac{1+ \abs{x}}^{\frac{6\ep}{\pi}} \lesssim w(x)\lesssim \brac{1+ \abs{x}}^{\frac{6\ep}{\pi}}
\end{align*}
In particular we have for all $x \in \Rsp$
\begin{align*}
w(x), \frac{1}{w(x)} &\lesssim \brac{1+ \abs{x}}^{\frac{1}{1000}}
\end{align*}
\item For any $n \geq 1$ we have for all $x \in \Rsp$
\begin{align*}
\abs{\partial_{x}^{n}w(x)} &\lesssim_{n} \abs{\ep} \brac{1+ \abs{x}}^{-n + \frac{1}{1000}}.
\end{align*}
\item For all $x,y \in \Rsp$ with $x \neq y$ we have
\begin{align*}
\abs{\frac{w(x)-w(y)}{x-y}} \lesssim 1, \quad \abs{\frac{xw(x)-yw(y)}{x-y}} \lesssim 1+\abs{x}^{\frac{1}{1000}}+\abs{y}^{\frac{1}{1000}}.
\end{align*}
\item If $v: \Rsp \to \Csp$ is a function, then we have the equivalence
\begin{align*}
\norm[L^{2}_{w}]{v}^{2} & \lesssim  \int_{\mathbb{R}} \abs{v}^{2}(x w)_{x} \diff x \lesssim \norm[L^{2}_{w}]{v}^{2}.
\end{align*}
\end{enumerate}
\end{prop}
\begin{proof}
We prove these sequentially.
\begin{enumerate}
\item This follows directly from \eqref{eq:a}.
\item This follows directly from \propref{prop:Gproperties}.
\item This follows from \eqref{eq:HilG} and choosing $\ep_0>0$ small enough. 
\item Using \eqref{eq:HilG} and \propref{prop:Gproperties} we get the required inequality. 
\item This follows from the above inequality directly.
\item To see this, we observe that
\begin{align*}
\int_\Rsp \abs{v}^2(xw)_x \diff x = \int_\Rsp \abs{v}^2 w \brac{1 + x \frac{w_x}{w}} \diff x = \int_\Rsp \abs{v}^2 w \brac{1 + 3i\ep x\Hil G_x} \diff x
\end{align*}
Now using the fact that $\ep_1\norm[L^{\infty}]{ x\Hil G_{x}} \leq 1/6$, we get the required equivalence. 
\end{enumerate}
\end{proof}

From this point forward in the paper, we will always assume that $\ep \in [-\ep_1, \ep_1]$ where $\ep_1>0$ is a number  given by \propref{prop:weightproperties}.

\subsection{Derivation of the main equation}

Using $f = \ep i \Hil G + i\Hil u$, we observe that
\begin{align*}
e^{3f} = e^{3i\ep\Hil G} e^{3i\Hil u} = we^{3i\Hil u} = w + w(e^{3i\Hil u} - 1)
\end{align*}
Now
\begin{align*}
& i\Hil\frac{d^{2}}{dx^{2}}\cbrac{e^{-f}g_{x}} \\
& = i\Hil\frac{\diff}{\diff x} \cbrac{-e^{-f}f_xg_x + e^{-f}g_{xx}} \\
& = i\Hil\cbrac{e^{-f}f_x^2 g_x - e^{-f}f_{xx}g_x -2e^{-f}f_xg_{xx} + e^{-f}g_{xxx}} \\
& = e^{-f}i\Hil g_{xxx} - i\sqbrac{e^{-f}, \Hil}g_{xxx} + i\Hil\cbrac{e^{-f}f_x^2 g_x - e^{-f}f_{xx}g_x -2e^{-f}f_xg_{xx}}
\end{align*}
Multiplying both sides of the above equation by $e^f$ and using $g = \ep G + u$ we get
\begin{align*}
& i\Hil u_{xxx} \\
& = - \ep i\Hil G_{xxx} + i\Hil g_{xxx}  \\
& = -\ep i\Hil G_{xxx} + e^f i\Hil\frac{d^{2}}{dx^{2}}\cbrac{e^{-f}g_{x}} + ie^f \sqbrac{e^{-f}, \Hil}g_{xxx} \\
& \quad - e^f i\Hil\cbrac{e^{-f}f_x^2 g_x - e^{-f}f_{xx}g_x -2e^{-f}f_xg_{xx}}
\end{align*}
We also have
\begin{align*}
w\frac{x}{3a} u_x & = - \ep w\frac{x}{3a}G_x + w\frac{x}{3a}g_x \\
& = - \ep w\frac{x}{3a}G_x + e^{3f}\frac{x}{3a}g_x - w(e^{3i\Hil u} - 1)\frac{x}{3a}g_x
\end{align*}
Adding these two equations and using \eqref{eq:selfsimilar} we therefore get the main equation
\begin{align}\label{eq:linearequationorig}
\Lcal u = F
\end{align}
where
\begin{align}\label{def:L}
\Lcal u = i\Hil u_{xxx} + w\frac{x}{3a} u_x
\end{align}
and
\begin{align}\label{F}
\begin{aligned}
F & =  -\ep i\Hil G_{xxx} + ie^f \sqbrac{e^{-f}, \Hil}g_{xxx}  - e^f i\Hil\cbrac{e^{-f}f_x^2 g_x - e^{-f}f_{xx}g_x -2e^{-f}f_xg_{xx}} \\
& \quad - \ep w\frac{x}{3a}G_x - w(e^{3i\Hil u} - 1)\frac{x}{3a}g_x + ie^f f_{x} \Hil\frac{d}{dx} \cbrac{e^{-f}g_{x}} \\
& \quad + e^{3f}g_{x}\Hil\cbrac{e^{-2f}\Hil\frac{d}{dx}\cbrac{ e^{-f}g_{x}}}
\end{aligned}
\end{align}

\section{Linear Analysis}\label{sec:linearanalysis}

In this section we analyze the linear equation
\begin{align}\label{eq:linearequation}
\Lcal v = F
\end{align}
Here $\Lcal$ is defined in \eqref{def:L} namely
\begin{align*}
\Lcal v = i\Hil v_{xxx} + w\frac{x}{3a} v_x
\end{align*}
and $w$ is defined by \eqref{def:w}. We define the norm
\begin{align}\label{def:Xnorm}
	\norm[X]{v} &= \norm[L^{2}_{w}]{v} + \norm[L^{2}_{w}]{xv_{x}} + \norm[\dot{H}^{1}]{v} +  \norm[\dot{H}^{3}]{v}.
\end{align}

The remainder of this section is dedicated to proving the following theorem.

\begin{thm}\label{thm:linear}
Let $\ep_1>0$ be given by \cref{prop:weightproperties} and let $\ep \in [-\ep_1, \ep_1]$. If $F\in L^{2}(\mathbb{R})$ and $w^{-\frac{1}{2}}F\in L^{2}(\mathbb{R})$, then exists a unique solution $v \in L^{2}_{w}(\mathbb{R}) \cap \dot{H}^{\frac{3}{2}}(\Rsp)$ to \eqref{eq:linearequation} in the distributional sense. Moreover $v \in \dot{H}^{1}(\Rsp)\cap\dot{H}^{3}(\Rsp)$ and $xv_{x}\in L^{2}_{w}(\mathbb{R})$ with
\begin{align}\label{ineq:linearXestimate}
\norm[X]{v} &\lesssim \norm[L^{2}]{F} + \norm[L^{2}]{w^{-\frac{1}{2}}F}.
\end{align}
Furthermore, $v$ satisfies the additional estimate
\begin{align*}
\norm[L^{2}]{(1+x^{2})^{\frac{1}{8}}v_{xx}} + \norm[L^{\infty}]{(1+x^{2})^{\frac{1}{8}}v} &\lesssim  \norm[X]{v} \lesssim \norm[L^{2}]{F} + \norm[L^{2}]{w^{-\frac{1}{2}}F}. 
\end{align*}
\end{thm}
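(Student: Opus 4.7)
The plan is to first establish the a priori estimate
\begin{align*}
\norm[X]{v} \lesssim \norm[L^2]{F} + \norm[L^2]{w^{-1/2}F},
\end{align*}
then construct approximate solutions on which this estimate holds rigorously, and pass to the limit; uniqueness then follows from the a priori estimate applied to the difference of two solutions. The starting point is the identity $i\Hil v_{xxx} = -\Dabs^3 v$, so $\mathcal{L} v = -\Dabs^3 v + \tfrac{wx}{3a}v_x$ decomposes as a nonlocal self-adjoint elliptic part of order three plus a transport-like term whose symmetric part is the multiplication operator $-\tfrac{(xw)_x}{6a}$, which is coercive by \propref{prop:normequivalence}.

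\textbf{A priori estimates.} Pairing $\mathcal{L}v = F$ with $v$ in $L^{2}$ and integrating by parts the transport term yields
\begin{align*}
\norm[L^2]{\Dabs^{3/2}v}^{2} + \frac{1}{6a}\int_{\Rsp} (xw)_{x} v^{2}\,dx = -\int_{\Rsp} vF\,dx,
\end{align*}
so \propref{prop:normequivalence} and Cauchy--Schwarz give the basic bound $\norm[L^2]{\Dabs^{3/2}v} + \norm[L^2_{w}]{v} \lesssim \norm[L^2]{w^{-1/2}F}$, which already establishes uniqueness. For the weighted first-derivative bound $\norm[L^2_w]{xv_x}$, I would test $\mathcal{L}v = F$ against a multiplier built from $xv_x$ (for instance $(xv)_{x} = v + xv_{x}$, which packages the basic pairing together with the weighted one). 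The transport term immediately produces the target quantity $\tfrac{1}{3a}\int_\Rsp wx^{2}v_x^{2}\,dx = \tfrac{1}{3a}\norm[L^2_w]{xv_x}^{2}$, while the contribution of $-\Dabs^{3}$ is reduced via the commutator identities $[\Hil,x]f = \tfrac{i}{\pi}\int_{\Rsp} f\,dy$ and $[\Dabs^{3},x] = 3i\Hil\Dabs^{2}$ to quantities already controlled by the basic estimate. The $\dot{H}^{3}$ bound comes from the equation itself, rewritten as $\Dabs^{3}v = \tfrac{wx}{3a}v_x - F$, so
\begin{align*}
\norm[\dot{H}^{3}]{v} \leq \norm[L^2]{F} + \frac{1}{3a}\norm[L^2]{wxv_{x}},
\end{align*}
and the last norm is handled by the pointwise estimates on $w$ in \corref{prop:weightproperties} combined with the $\norm[L^2_w]{xv_x}$ control and interpolation against the $L^{2}_{w}$ bound on $v$. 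The $\dot{H}^{1}$ bound then follows by interpolation between $L^{2}_{w}$ and $\dot{H}^{3}$, and the supplementary inequalities involving the weight $(1+x^{2})^{1/8}$ are standard weighted Gagliardo--Nirenberg--Sobolev consequences of the four pieces of $\norm[X]{v}$.

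\textbf{Existence and main obstacle.} For existence, I propose an elliptic regularization: add $\delta\Dabs^{4}v$ to $\mathcal{L}v$ (and if necessary truncate the unbounded coefficient $wx$ to a compact interval $[-R,R]$), obtain a coercive problem on $H^{2}$ solvable by Lax--Milgram, and pass to $R \to \infty$ and then $\delta \to 0^{+}$ using the uniform a priori bounds above. An alternative, potentially cleaner route is to first solve the model problem with $w = 1$ on the Fourier side, where it reduces to the first-order ODE $\tfrac{\xi}{3a}\partial_{\xi}\hat{v} + (|\xi|^{3} + \tfrac{1}{3a})\hat{v} = -\hat{F}$, integrable explicitly via the factor $\xi e^{a|\xi|^{3}}$, and then absorb the perturbation $(w-1)\tfrac{x}{3a}v_x = O(\ep)$ by a contraction argument using the smallness of $\ep_{0}$. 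The main obstacle throughout is the weighted estimate $\norm[L^2_w]{xv_x}$: the transport coefficient $wx/(3a)$ grows essentially linearly in $x$ while the weight $w$ providing coercivity lies only within $(1+x^{2})^{\pm 1/1000}$, so the commutator errors arising when $\Dabs^{3}$ passes through the factor $x$ must be absorbed into already-controlled quantities with essentially no slack. It is precisely at this step that the smallness of $\ep_{0}$ from \defref{def:ep0} is used to close the estimates.
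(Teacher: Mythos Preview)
Your overall architecture---basic pairing with $v$ for $L^2_w\cap\dot H^{3/2}$, then pairing with $xv_x$ for $\norm[L^2_w]{xv_x}$, then $\dot H^3$---mirrors the paper's, and your computation of the two pairings is essentially what appears in \propref{prop:uniqueness} and \propref{prop:gainofdecay}. There are, however, two genuine gaps.

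\textbf{The $\dot H^3$ bound.} You cannot read off $\norm[\dot H^3]{v}\leq \norm[L^2]{F}+\tfrac{1}{3a}\norm[L^2]{wxv_x}$ and then control the last norm by interpolation. The quantity you control is $\norm[L^2_w]{xv_x}=\norm[L^2]{w^{1/2}xv_x}$, and passing to $\norm[L^2]{wxv_x}$ costs an extra factor of $w^{1/2}$, which by \corref{prop:weightproperties} is \emph{unbounded} (it grows like $(1+x^2)^{C\ep}$). No interpolation between $\norm[L^2_w]{v}$ and $\norm[L^2_w]{xv_x}$ manufactures this missing weight, because both carry the same power $w^{1/2}$. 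The paper's \propref{prop:gainofregularity} avoids this entirely: it tests the equation against $\abs{D}^3 v$, and the dangerous cross term $\int(\abs{D}^3 v)\,\tfrac{wx}{3a}v_x$ is handled by one further integration by parts, producing $\int(i\Hil v_{xx})\,\partial_x(\tfrac{wx}{3a})v_x$ plus a term $\int(i\Hil v_{xx})\,\tfrac{wx}{3a}v_{xx}$ that vanishes by antisymmetry modulo a commutator with bounded coefficient $\partial_x^2(wx)$. The point is that $\partial_x(wx)=w(1+3i\ep\, x\Hil G_x)\lesssim w$ pointwise (by \eqref{condition3}), so one never needs $\norm[L^2]{wxv_x}$, only $\norm[L^2]{wv_x}$, which \emph{is} controlled via \propref{cor:decayinterpolation}.

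\textbf{The Fourier-side existence route.} Solving the $w=1$ problem explicitly and treating $(w-1)\tfrac{x}{3a}v_x$ as a contraction-sized perturbation does not work: by \eqref{eq:HilG}, $i\Hil G$ grows logarithmically, so $w-1=e^{3i\ep\Hil G}-1$ is unbounded in $x$, and $(w-1)w^{-1/2}$ is likewise unbounded. Thus $\norm[L^2]{(w-1)\tfrac{x}{3a}v_x}$ is not dominated by a small multiple of $\norm[L^2_w]{xv_x}$, and the contraction fails regardless of how small $\ep_0$ is. The paper's existence argument instead adds the regularizing term $\delta\int wx^2 v_x\zeta_x$ to the bilinear form (see \eqref{def:bilinearformdelta}), which is tailored to give coercivity in exactly the weighted space $\mathcal H$ where Lax--Milgram applies; the $\dot H^{3/2}\cap L^2_w$ bounds are uniform in $\delta$, and one passes to a weak limit. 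Your alternative $\delta\abs{D}^4$ regularization would face the same boundedness issue with the transport term on any unweighted Sobolev space, since $wx$ grows.
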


The proof is structured as follows. In subsection \ref{subsec:weakexist}, we demonstrate the existence of a weak solution to \eqref{eq:linearequation}. Next, in subsection \ref{subsec:decaygain}, we prove \eqref{ineq:linearXestimate}. Uniqueness of the solution is proven in subsection \ref{subsec:uniq}. The second inequality in Theorem \ref{thm:linear} is proven in subsection \ref{subsec:furtherproperties}.

\subsection{Existence of a Weak Solution to the Linear Equation}\label{subsec:weakexist}

Define the bilinear map $B: L^{2}_{w}(\mathbb{R})\cap \dot{H}^{\frac{3}{2}}(\mathbb{R}) \times \mathcal{D}(\mathbb{R})\rightarrow \mathbb{R}$ as
\begin{align}\label{def:B}
	B(v,\zeta) = \int_{\mathbb{R}} v \abs{D}^{3}\zeta + v\brac{w\frac{x}{3a}\zeta}_{x} \diff x.
\end{align}
Let $w^{-\frac{1}{2}}F \in L^{2}(\mathbb{R})$. We say that $v \in L^{2}_{w}(\mathbb{R})\cap \dot{H}^{\frac{3}{2}}(\mathbb{R})$ solves \eqref{eq:linearequation} in weak sense if 
\begin{align}\label{eq:weakselfsimilar}
	B(v,\zeta) = -\int_\Rsp F\zeta \diff x \qquad\quad \text{for all }  \zeta\in \mathcal{D}(\mathbb{R})
\end{align}

Note that $v$ solves \eqref{eq:linearequation} in weak sense is the same as $v$ solving \eqref{eq:linearequation} in the sense of distributions. To prove the existence of a solution to \eqref{eq:weakselfsimilar}, we will make use of the classical Lax-Milgram theorem \cite{La02}:
\begin{theorem}[Lax-Milgram]
	Let $\mathcal{H}$ be a real Hilbert space and suppose $A: \mathcal{H}\times\mathcal{H} \rightarrow \mathbb{R}$ is a bilinear form satisfying the following properties:
\begin{enumerate}
\item It is bounded,  i.e. there exists $C>0$ such that
\begin{align*}
\abs{A(v,\zeta)} \leq C\norm[\mathcal{H}]{v}\norm[\mathcal{H}]{\zeta} \quad \text{for any  } v,\zeta \in \mathcal{H}
\end{align*}
\item It is coercive, i.e. there exists $c>0$ such that
\begin{align*}
\abs{A(v,v)} \geq c\norm[\mathcal{H}]{v}^{2}	 \quad \text{for any  } v \in \mathcal{H}
\end{align*}
\end{enumerate}
Then, for any $F \in \mathcal{H}^{*}$, there exists a unique  $v\in\mathcal{H}$ satisfying
\begin{align*}
A(v,\zeta) = \abrac{F,\zeta}  \quad \text{for all  } \zeta \in \mathcal{H}.
\end{align*}
Moreover this $v$ satisfies the following estimate
\begin{align*}
\norm[\mathcal{H}]{v} \leq \frac{1}{c}\norm[\mathcal{H}^{*}]{F}
\end{align*}
\end{theorem}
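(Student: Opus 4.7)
The plan is to reduce the bilinear problem to a linear operator equation on $\mathcal{H}$ via the Riesz representation theorem, and then use coercivity in two roles: first to bound the operator below (giving injectivity and closed range), and second to rule out a nontrivial orthogonal complement of the range.

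First, for each fixed $v \in \mathcal{H}$, the boundedness hypothesis shows that $\zeta \mapsto A(v,\zeta)$ is a bounded linear functional with norm at most $C\norm{v}$. By the Riesz representation theorem, there is a unique $Tv \in \mathcal{H}$ such that
\begin{align*}
A(v,\zeta) = \langle Tv, \zeta \rangle \qquad \text{for all } \zeta \in \mathcal{H}.
\end{align*}
Bilinearity of $A$ shows that $T$ is linear, and the pointwise bound gives $\norm{Tv} \leq C\norm{v}$. Similarly, the given $F \in \mathcal{H}^{*}$ is represented by a unique $\widetilde{F} \in \mathcal{H}$ with $\langle \widetilde{F}, \zeta\rangle = F(\zeta)$ and $\norm{\widetilde{F}} = \norm[\mathcal{H}^{*}]{F}$. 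The variational problem $A(v,\zeta) = F(\zeta)$ for all $\zeta$ is therefore equivalent to the operator equation $Tv = \widetilde{F}$.

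Next I would invoke coercivity. For any $v \in \mathcal{H}$,
\begin{align*}
c\norm{v}^{2} \leq \abs{A(v,v)} = \abs{\langle Tv,v\rangle} \leq \norm{Tv}\norm{v},
\end{align*}
which yields $\norm{Tv} \geq c\norm{v}$. This bound immediately implies $T$ is injective and, by a standard argument (Cauchy sequences in the range pull back to Cauchy sequences in the domain via this lower bound), that the range $\text{ran}(T)$ is closed. To upgrade this to surjectivity, suppose $w \in \text{ran}(T)^{\perp}$; then $\langle Tv, w\rangle = 0$ for every $v \in \mathcal{H}$, and setting $v = w$ gives $A(w,w) = 0$, which by coercivity forces $w = 0$. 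Since $\text{ran}(T)$ is closed with trivial orthogonal complement, $T$ is a bijection of $\mathcal{H}$ onto itself.

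The unique solution is therefore $v = T^{-1}\widetilde{F}$, and applying the lower bound $\norm{Tv} \geq c\norm{v}$ to $v = T^{-1}\widetilde{F}$ gives the estimate
\begin{align*}
\norm{v} \leq \tfrac{1}{c}\norm{\widetilde{F}} = \tfrac{1}{c}\norm[\mathcal{H}^{*}]{F},
\end{align*}
completing the proof. The only nontrivial step is the surjectivity argument, which requires the absence of symmetry of $A$ to be compensated by the orthogonal-complement trick rather than a direct Riesz identification; everything else is routine from the Riesz theorem and basic Hilbert space geometry.
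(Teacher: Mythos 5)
Your proof is correct: this is the classical Riesz-representation argument for the Lax--Milgram theorem (coercivity gives the lower bound $\lVert Tv\rVert \geq c\lVert v\rVert$, hence injectivity and closed range, and the orthogonal-complement step handles surjectivity without assuming symmetry of $A$). The paper itself does not prove this statement but cites it as a classical result, so there is nothing to compare against; your write-up is the standard proof and all steps are sound.
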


To apply the Lax-Millgram theorem, we first define an appropriate space $\mathcal{H}$ motivated by the bilinear form \eqref{def:B}. Define the norm
\begin{align*}
	\norm[\mathcal{H}]{v} &=	\norm[L^{2}_{w}]{v} + 	\norm[L^{2}_{w}]{xv_{x}} + 	\norm[\dot{H}^{\frac{3}{2}}]{v}
\end{align*}
For $0 \leq \abs{\ep} \leq \ep_1$, we define the real Hilbert space $\mathcal{H}$ as the closure of compactly supported smooth functions under the non-degenerate norm $\norm[\mathcal{H}]{\cdot}$.

\begin{lemma}\label{lem:Hcalasymp}
Let $v \in \Hcal$. Then we have $xw v^2 \in \Linfty(\Rsp)$ with
\begin{align*}
\norm[\infty]{xwv^2} \lesssim \norm[L^2_w]{v}^2 + \norm[L^2_w]{xv_x}^2
\end{align*}
Furthermore $\abs{xwv^2} \to 0$ as $\abs{x} \to \infty$. 
\end{lemma}
\begin{proof}
Let $v \in \Hcal$ and observe that
\begin{align*}
(xw v^2)_x = (xw)_xv^2 + 2wxvv_x
\end{align*}
Now integrating, using \propref{prop:weightproperties} and using the fact that $xwv^2$ is zero at $x = 0$, we get the estimate 
\begin{align*}
\norm[\infty]{xwv^2} \lesssim \norm[L^2_w]{v}^2 + \norm[L^2_w]{xv_x}^2 \implies \norm[\infty]{\abs{x}^\half w^\half v} \lesssim \norm[L^2_w]{v} + \norm[L^2_w]{xv_x}
\end{align*}
Let $v_n \in \Dcal$ be a sequence of smooth functions with compact support such that $\norm[\Hcal]{v - v_n} \to 0$ as $n \to 0$.  From the above estimate we see that $\norm[\infty]{\abs{x}^\half w^\half (v - v_n)} \to 0$ as $n \to \infty$. Hence $\abs{xwv^2} \to 0$ as $\abs{x} \to \infty$. 
\end{proof}

The bilinear form \eqref{def:B} does not satisfy the coercivity hypothesis of the Lax-Milgram theorem for the Hilbert space $\mathcal{H}$. To resolve this, for $0 < \delta \leq 1$ we define the modified bilinear form $B_{\delta} : \mathcal{H}\times \mathcal{H}\rightarrow \mathbb{R}$ as
\begin{align}\label{def:bilinearformdelta}
	B_{\delta}(v,\zeta) &=  \int_{\mathbb{R}} v \abs{D}^{3}\zeta + v\brac{w\frac{x}{3a}\zeta}_{x} + \delta w x^{2}v_{x} \zeta_{x}\diff x
\end{align}
For every $0 < \delta \leq 1$, we see that the bilinear form $B_{\delta}$ is bounded for $v, \zeta \in \mathcal{H}$ by the Cauchy-Schwarz inequality.
Using \propref{prop:weightproperties} and \lemref{lem:Hcalasymp}, we see that there exists a universal constant $c>0$ such that for all $v \in \Hcal$ we have
\begin{align*}
B_{\delta}(v,v) &\geq \norm[\dot{H}^{\frac{3}{2}}]{v}^{2} + c\norm[L^{2}_{w}]{v}^{2} + \delta \norm[L^{2}_{w}]{xv_{x}}^{2} 
\end{align*}
Observe that if $w^{-\frac{1}{2}}F \in L^{2}$, then $F\in \mathcal{H}^{*}$. Hence by the Lax-Milgram theorem we now know that there exists a unique  $v^{(\delta)} \in \Hcal$ such that 
\begin{align*}
B_\delta(v,\zeta) = -\int_\Rsp F\zeta \diff x \qquad\quad \text{for all }  \zeta\in \Hcal
\end{align*}
For each $0 < \delta \leq 1$, we also see that
 \begin{align*}
 	\norm[\dot{H}^{\frac{3}{2}}]{v^{(\delta)}}^{2} + c\norm[L^{2}_{w}]{v^{(\delta)}}^{2} + \delta \norm[L^{2}_{w}]{xv^{(\delta)}_{x}}^{2}  &\leq B_\delta(v^{(\delta)},v^{(\delta)}) \lesssim \norm[L^{2}]{w^{-\frac{1}{2}}F} \norm[L^{2}_{w}]{v^{(\delta)}}.
 \end{align*}
Thus, we obtain that
 \begin{align*}
 	\norm[\dot{H}^{\frac{3}{2}}]{v^{(\delta)}}^{2} + \frac{c}{2}\norm[L^{2}_{w}]{v^{(\delta)}}^{2} +  \delta \norm[L^{2}_{w}]{xv^{(\delta)}_{x}}^{2}  &\lesssim  \norm[L^{2}]{w^{-\frac{1}{2}}F}^{2}.
 \end{align*}
Hence, if $w^{-\frac{1}{2}}F\in L^{2}(\mathbb{R})$, then the set of functions $v^{(\delta)}$ is uniformly bounded in $\dot{H}^{\frac{3}{2}}(\mathbb{R}) \cap L^{2}_{w}(\mathbb{R})$ as $\delta \to 0$. Thus, there exists a subsequence that converges weakly to some $v \in \dot{H}^{\frac{3}{2}}(\mathbb{R}) \cap L^{2}_{w}(\mathbb{R})$. It is also easy to see that this $v$ is a solution to \eqref{eq:weakselfsimilar}. This proves the existence part of \thmref{thm:linear}. 

We have the following additional regularity by interpolation:
\begin{prop}\label{prop:regularityvinterpolation}
If $v\in  L^{2}_{w}(\mathbb{R}) \cap \dot{H}^{\frac{3}{2}}(\mathbb{R})$, then $w^{\frac{1}{6}}v_{x} \in L^{2}(\mathbb{R})$ and 
\begin{align*}
\norm[L^{2}]{w^{\frac{1}{3}}\abs{D}^{\frac{1}{2}}v} + 	\norm[L^{2}]{w^{\frac{1}{6}}v_{x}} &\lesssim \norm[L^{2}_{w}]{v} + \norm[\dot{H}^{\frac{3}{2}}]{v}.
\end{align*}
\end{prop}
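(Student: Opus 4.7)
This is a weighted interpolation inequality between the endpoints $v \in L^{2}_{w}$ and $\abs{D}^{3/2}v \in L^{2}$. Along the interpolation scale parametrized by $\theta \in [0,1]$, one controls $w^{(1-\theta)/2} \abs{D}^{3\theta/2}v$ in $L^{2}$, and the two stated quantities correspond to $\theta = 1/3$ and $\theta = 2/3$. The conversion $\abs{D}v \leftrightarrow v_{x}$ uses $\abs{D} = i\Hil\partial_{x}$ together with the boundedness of $\Hil$ on $L^{2}(w^{1/3})$, which holds since $w^{1/3}$ is Muckenhoupt $A_{2}$ by Corollary \ref{prop:weightproperties}.

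My plan is to prove both estimates simultaneously in squared form (stronger than the stated inequality) via integration by parts. Write
\[
\int w^{1/3} v_{x}^{2}\,dx \;=\; -\int v\,w^{1/3}\,v_{xx}\,dx \;-\; \int v\,(w^{1/3})_{x}\,v_{x}\,dx.
\]
The second term is controlled by $C\ep\,\norm[L^{2}_{w}]{v}\,\norm[L^{2}]{w^{1/6}v_{x}}$, using $\abs{(w^{1/3})_{x}} \lesssim \ep\,w^{1/3}/(1+\abs{x})$ from Corollary \ref{prop:weightproperties} and the elementary bound $w^{1/3}/(1+x^{2}) \lesssim w$ valid for $\abs{\ep} \leq \ep_{0}$; it is absorbed by Young's inequality. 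For the first term, write $-v_{xx} = \abs{D}^{1/2}\abs{D}^{3/2}v$, move $\abs{D}^{1/2}$ to the other factor by self-adjointness, and apply the fractional Leibniz splitting $\abs{D}^{1/2}(w^{1/3}v) = w^{1/3}\abs{D}^{1/2}v + [\abs{D}^{1/2},w^{1/3}]v$. Cauchy--Schwarz then gives
\[
\norm[L^{2}]{w^{1/6}v_{x}}^{2} \lesssim \ep\,\norm[L^{2}_{w}]{v}^{2} + \norm[L^{2}]{w^{1/3}\abs{D}^{1/2}v}\,\norm[\dot{H}^{3/2}]{v} + \norm[L^{2}]{[\abs{D}^{1/2},w^{1/3}]v}\,\norm[\dot{H}^{3/2}]{v}.
\]
An analogous integration by parts applied to $\int w^{2/3}\abs{\abs{D}^{1/2}v}^{2}\,dx$ (move $\abs{D}^{1/2}$ across, rewrite $\langle w^{2/3}\abs{D}v,v\rangle$ via $\abs{D} = i\Hil\partial_{x}$, then integrate by parts once more in $\partial_{x}$) produces the symmetric inequality relating $\norm[L^{2}]{w^{1/3}\abs{D}^{1/2}v}^{2}$ to $\norm[L^{2}]{w^{1/6}v_{x}}$, $\norm[L^{2}_{w}]{v}$, $\norm[\dot{H}^{3/2}]{v}$, and further commutators. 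Closing the resulting coupled system via Young's inequality and absorbing onto the left yields the desired bound.

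The main obstacle is the commutator estimate $\norm[L^{2}]{[\abs{D}^{1/2},w^{1/3}]v} \lesssim \norm[L^{2}_{w}]{v}$ and its analogues. Since $w^{1/3}$ is not in $L^{\infty}(\Rsp)$ (it grows polynomially), standard Kato--Ponce bounds do not apply off the shelf; instead one must exploit the pointwise derivative decay $\abs{\partial_{x}^{n}w} \lesssim \ep(1+x^{2})^{-n/2+C\ep}$ from Corollary \ref{prop:weightproperties} via a kernel-level analysis of $\abs{D}^{1/2}$, absorbing the slow growth of $w^{1/3}$ into the weighted $L^{2}_{w}$ norm on the right. An alternative route avoiding commutators entirely is Stein's complex interpolation theorem applied to the analytic family of operators $S_{z}v = w^{(1-z)/2}\abs{D}^{3z/2}v$ for $\Real z \in [0,1]$, using the unitarity of $w^{it/2}$ and $\abs{D}^{3it/2}$ on the two boundary lines together with a density argument to set up $S_z$ rigorously when $\ep < 0$.
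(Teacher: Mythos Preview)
Your approach is the paper's. The paper packages it into an abstract weighted interpolation lemma, Proposition~\ref{prop:weightsobolevinterpolation} in the Appendix: for a weight $W$ with $W^k W_x \in L^2$ for $\abs{k}\le 10$ and either $W$ or $W^{-1}$ bounded,
\[
\norm[2]{W^2 \abs{D}^{1/2} f} + \norm[2]{W \abs{D} f} \;\lesssim_W\; \norm[2]{W^3 f} + \norm[\dot{H}^{3/2}]{f},
\]
and then applies it with $W = w^{1/6}$ (the paper's proof writes $w^{1/2}$, evidently a typo). The proof of that lemma is exactly your scheme: expand each squared norm, shift $\abs{D}^{1/2}$ across by self-adjointness, collect commutators, couple the two resulting inequalities, and close with Young.

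The one place you diverge is in overestimating the difficulty of the commutator step. Neither kernel-level analysis nor Stein complex interpolation is needed. The basic bound $\norm[2]{[f,\abs{D}^{1/2}]g}\lesssim\norm[2]{f_x}\norm[2]{g}$ from Proposition~\ref{prop:standardcommutator} requires only $f_x\in L^2$, not $f\in L^\infty$, and $(w^a)_x\in L^2$ for all relevant exponents $a$ by Corollary~\ref{prop:weightproperties}. To land the correct weight on the argument, the paper rewrites each commutator via algebraic telescoping, e.g.\ $[A,W^a]h = [A,W^{a-b}](W^b h) - W^a[A,W^{-b}](W^b h)$, choosing $b$ so that $W^b h$ already carries the target weight; the leftover outer factor is then controlled using whichever of $W\in L^\infty$ or $W^{-1}\in L^\infty$ holds (the two cases corresponding to the sign of $\ep$). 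This closes the estimate with nothing beyond Proposition~\ref{prop:standardcommutator} and Corollary~\ref{prop:weightproperties}.
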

\begin{proof}
Set $W = w^{\frac{1}{6}}$. By \propref{prop:weightproperties}, $W = w^{\frac{1}{6}}$ satisfies the sufficient conditions of the hypothesis in Proposition \ref{prop:weightsobolevinterpolation} and \propref{prop:JdeltaHilnorm}. Moreover, for $0 \leq \abs{\ep} \leq \ep_1$, the constant $C_{W}$ in  Proposition \ref{prop:weightsobolevinterpolation} is bounded by some universal constant. Thus we have the required estimates. 
\end{proof}

\subsection{Decay and Gain of Regularity of the Linear Solution}\label{subsec:decaygain}

If $v\in L^{2}_{w}(\mathbb{R})\cap \dot{H}^{\frac{3}{2}}(\mathbb{R})$ solves \eqref{eq:weakselfsimilar}, then it solves \eqref{eq:linearequation} in a distributional sense:
\begin{align*}
	\mathcal{L}v =  i\Hil v_{xxx} + w\frac{x}{3a}v_{x} = F.
\end{align*}
We will derive a mollified equation that holds pointwise. Let $\phi:\mathbb{R}\rightarrow [0,\infty)$ be a smooth function supported in $(-1,1)$ with $\int_\Rsp \phi(x) \diff x = 1$. For $0 < \delta \leq 1$, define the smoothing operator
\begin{align}\label{def:Jdelta}
J_{\delta} f &= \phi_{\delta}\ast f
\end{align}
where
\begin{align*}
	\phi_{\delta}(x) = \frac{1}{\delta}\phi\brac{\frac{x}{\delta}}.
\end{align*}
Applying $J_{\delta}$ to the linear equation \eqref{eq:linearequation}, we obtain the following equation
\begin{align*}
\brac{i\Hil J_{\delta}v}_{xxx} + J_{\delta}\brac{w\frac{x}{3a}v_{x}} = J_{\delta}F.
\end{align*}
Since $v\in L^{2}_{w}(\mathbb{R})$, we see that $\Hil J_{\delta} v \in L^{2}_{w}(\mathbb{R})$ by Proposition \ref{prop:JdeltaHilnorm} and it is a smooth function. Furthermore, as $v_{x} \in L^{1}_{loc}(\mathbb{R})$, the above equation is true pointwise everywhere. Thus,
\begin{align*}
	\brac{i\Hil J_{\delta}v}_{xxx} + w\frac{x}{3a}\brac{J_{\delta}v}_{x} = J_{\delta}F + \sqbrac{w\frac{x}{3a}, J_{\delta}}v_{x}.
\end{align*}
Next, consider a smooth function $\chi:\mathbb{R}\rightarrow\mathbb{R}$ compactly supported in $[-1,1]$ such that $0\leq \chi(x)\leq 1$ for all $x\in\mathbb{R}$ and $\chi(x) = 1$ if $x\in[-\frac{1}{2},\frac{1}{2}]$. Then, define the smooth cut-off function compactly supported in $[-\delta^{-1},\delta^{-1}]$,
\begin{align}\label{def:chidelta}
\chi_{\delta}(x) = \chi(\delta x).
\end{align}
Multiplying both sides by $\chi_{\delta}$, we obtain that
\begin{align*}
	\chi_{\delta}\brac{i\Hil J_{\delta}v}_{xxx} + \chi_{\delta} w\frac{x}{3a}\brac{J_{\delta}v}_{x} = \chi_{\delta} J_{\delta}F + \chi_{\delta}\sqbrac{w\frac{x}{3a}, J_{\delta}}v_{x}.
\end{align*}
We therefore obtain that
\begin{align}\label{eq:lineardelta}
\mathcal{L}\brac{\chi_{\delta}J_{\delta}v} = F_{\delta} + w\frac{x}{3a}\brac{\partial_{x}\chi_{\delta}}J_{\delta}v
\end{align}
where
\begin{align}\label{def:Fdelta}
	\begin{aligned}
F_{\delta} &= \chi_{\delta} J_{\delta}F + \chi_{\delta}\sqbrac{w\frac{x}{3a}, J_{\delta}}v_{x} + 3\brac{\partial_{x}\chi_{\delta}}\brac{i\Hil J_{\delta}v}_{xx} + 3\brac{\partial_{x}^{2}\chi_{\delta}}\brac{i\Hil J_{\delta}v}_{x} \\
&\qquad  + \brac{\partial_{x}^{3}\chi_{\delta}}\brac{i\Hil J_{\delta}v}- \brac{\sqbrac{\chi_{\delta},i\Hil}J_{\delta}v}_{xxx}
	\end{aligned}
\end{align}
We will use the mollified equation \eqref{eq:lineardelta} in this section to prove regularity and decay of the solution $v$.

Prior to the proof, we note that by Proposition \ref{prop:JdeltaHilnorm}, if $w^{b} f \in L^{2}(\mathbb{R})$ for $0\leq b\leq 1$, then
\begin{align*}
	\norm[L^{2}]{w^{b}\Hil f} + \norm[L^{2}]{w^{b}J_{\delta}f} \lesssim \norm[L^{2}]{w^{b}f}.
\end{align*}
In particular, we will use this fact in the cases  $b= \frac{1}{2}$  and $b=\frac{1}{6}$ with $f= v$ and $f=v_{x}$ respectively. Additionally, note that throughout the remainder of this paper, we will use Proposition \ref{prop:standardcommutator} ubiquitously without reference. 
\begin{prop}\label{prop:chideltaJdeltanorm}
	If $v\in L^{2}_{w}(\mathbb{R}) \cap \dot{H}^{\frac{3}{2}}(\mathbb{R})$, then for all $0<\delta\leq 1$, we have that $\chi_{\delta}J_{\delta}v  \in L^{2}_{w}(\mathbb{R}) \cap \dot{H}^{\frac{3}{2}}(\mathbb{R})$ and
	\begin{align*}
	\norm[L^{2}_{w}]{\chi_{\delta}J_{\delta}v} + \norm[\dot{H}^{\frac{3}{2}}]{\chi_{\delta}J_{\delta}v} &\lesssim \norm[L^{2}_{w}]{v} + \norm[\dot{H}^{\frac{3}{2}}]{v}.
	\end{align*}
Moreover, if additionally we have that $v\in \dot{H}^{1}(\mathbb{R})$, then 
\begin{align*}
\lim_{\delta\rightarrow 0} \brac{\norm[L^{2}_{w}]{\chi_{\delta}J_{\delta}v - v} +	\norm[\dot{H}^{\frac{3}{2}}]{\chi_{\delta}J_{\delta}v - v} } = 0.
\end{align*}
\end{prop}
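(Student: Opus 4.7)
The statement splits into a $\delta$-uniform norm bound and a convergence claim, which I would prove in that order.

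For the $L^2_w$ bound, since $|\chi_\delta(x)| \le 1$ pointwise we have $\|\chi_\delta J_\delta v\|_{L^2_w} \le \|J_\delta v\|_{L^2_w}$, and Proposition~\ref{prop:JdeltaHilnorm} (applied with $b = 1/2$) gives $\|J_\delta v\|_{L^2_w} \lesssim \|v\|_{L^2_w}$. For the $\dot H^{3/2}$ bound, I would decompose
\[
|D|^{3/2}(\chi_\delta J_\delta v) = \chi_\delta\,|D|^{3/2} J_\delta v + [|D|^{3/2}, \chi_\delta] J_\delta v.
\]
The first summand has $L^2$-norm at most $\||D|^{3/2} J_\delta v\|_{L^2} \le \|v\|_{\dot H^{3/2}}$, since $J_\delta$ is a Fourier multiplier of modulus at most one. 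For the commutator I would invoke Proposition~\ref{prop:commutator} and use that $\chi_\delta(x) = \chi(\delta x)$ satisfies $\|\chi_\delta^{(k)}\|_{L^\infty} = \delta^k \|\chi^{(k)}\|_{L^\infty} \lesssim 1$ for every $k \ge 0$ and all $\delta < 1$; this produces intermediate-order norms of $J_\delta v$, which I would then close up by $\|v\|_{L^2_w} + \|v\|_{\dot H^{3/2}}$ via the weighted Sobolev interpolation of Proposition~\ref{prop:weightsobolevinterpolation} together with the $L^2_w$ bound of $J_\delta v$ already obtained.

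For the convergence statement I would use a density argument exploiting the additional hypothesis $v \in \dot H^1$. Given $\epsilon > 0$, choose $v_n \in C_c^\infty(\mathbb{R})$ with $\|v - v_n\|_{L^2_w} + \|v - v_n\|_{\dot H^{3/2}} < \epsilon$, and split
\[
\chi_\delta J_\delta v - v = \chi_\delta J_\delta(v - v_n) + (\chi_\delta J_\delta v_n - v_n) + (v_n - v).
\]
By the uniform bound just proved, the first summand is $\lesssim \epsilon$ and the third is $\le \epsilon$ in the sum of the two norms. For the middle term, once $\delta$ is small enough that $\supp(v_n) + [-\delta, \delta]$ lies inside $\{\chi_\delta = 1\}$ we have $\chi_\delta J_\delta v_n = J_\delta v_n$, and then $J_\delta v_n \to v_n$ in both $L^2_w$ and $\dot H^{3/2}$ by standard mollifier convergence applied to a smooth compactly supported function (via Proposition~\ref{prop:deltaconvergenceL2}). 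Sending $\delta \to 0$ and then $\epsilon \to 0$ concludes the proof.

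The main obstacle is the commutator estimate in the $\dot H^{3/2}$ bound, specifically obtaining it with a constant independent of $\delta$. A naive Kato--Ponce application produces intermediate Sobolev norms of $J_\delta v$ such as $\|J_\delta v\|_{\dot H^{1/2}}$ or $\|J_\delta v\|_{L^2}$, neither of which is directly controlled by the hypotheses: $v$ need not lie in $L^2$ because the weight $w \gtrsim (1+x^2)^{-1/1000}$ is not bounded away from zero at infinity. Bridging this requires exploiting that $\chi_\delta$ is a smooth symbol localized at the Littlewood--Paley scale $\delta$, so that the commutator with $|D|^{3/2}$ loses exactly one derivative in a $\delta$-uniform way, and then invoking the weighted interpolation of Proposition~\ref{prop:weightsobolevinterpolation} to pass from the available weighted $L^2_w$ control of $J_\delta v$ to the unweighted intermediate-order norms that the commutator bound produces.
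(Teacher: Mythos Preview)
Your $L^2_w$ bound is correct and matches the paper. The rest has genuine gaps.

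\textbf{Uniform $\dot H^{3/2}$ bound.} You propose to control $[|D|^{3/2},\chi_\delta]J_\delta v$ directly and cite Proposition~\ref{prop:commutator}, but that proposition concerns commutators $[J_\delta,W]$ of the mollifier with a multiplication operator --- it says nothing about $[|D|^{3/2},\cdot]$. The paper's toolbox (Proposition~\ref{prop:standardcommutator}) only supplies $[|D|^{1/2},\cdot]$ estimates, and a standard Kato--Ponce bound on $[|D|^{3/2},\chi_\delta]J_\delta v$ lands on $\|J_\delta v\|_{L^2}$ or $\|J_\delta v\|_{\dot H^{1/2}}$, which are not controlled (as you yourself note). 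The paper avoids this by first differentiating,
\[
\|\chi_\delta J_\delta v\|_{\dot H^{3/2}} \le \|(\partial_x\chi_\delta)J_\delta v\|_{\dot H^{1/2}} + \|\chi_\delta J_\delta v_x\|_{\dot H^{1/2}},
\]
and then handling each $\dot H^{1/2}$ piece via $[|D|^{1/2},\cdot]$ commutators. The key device is a weight-insertion trick: e.g.\ one rewrites $[|D|^{1/2},\partial_x\chi_\delta]J_\delta v$ as $[|D|^{1/2},w^{-1/2}\partial_x\chi_\delta](w^{1/2}J_\delta v) - \partial_x\chi_\delta[|D|^{1/2},w^{-1/2}](w^{1/2}J_\delta v)$, so that the commutator estimate of Proposition~\ref{prop:standardcommutator}(3) can be applied against the available quantity $\|w^{1/2}J_\delta v\|_{L^2}\lesssim\|v\|_{L^2_w}$. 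Your proposal gestures toward weighted interpolation but does not supply this step, and without it the $\delta$-uniform bound does not close.

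\textbf{Convergence.} Your density argument is circular. You assume $v_n\in C_c^\infty$ can be chosen with $\|v-v_n\|_{L^2_w}+\|v-v_n\|_{\dot H^{3/2}}<\epsilon$, but the natural way to produce such $v_n$ is precisely $\chi_\delta J_\delta v$ --- which is what you are trying to prove converges. No independent density result for $C_c^\infty$ in $L^2_w\cap\dot H^{3/2}$ is established or cited, and it is not obvious (the weight need not be bounded below). You also never use the hypothesis $v\in\dot H^1$, which should be a warning sign. (Your citation of Proposition~\ref{prop:deltaconvergenceL2} for mollifier convergence is also a mismatch; that proposition goes in the opposite direction.) The paper instead argues directly: it splits $\chi_\delta J_\delta v - v = (\chi_\delta-1)J_\delta v + (J_\delta v - v)$, shows each piece tends to zero, and the extra hypothesis $v\in\dot H^1$ enters exactly once, to bound $\|[|D|^{1/2},\chi_\delta-1]J_\delta v_x\|_{L^2}\lesssim\|\partial_x\chi_\delta\|_{L^2}\|v_x\|_{L^2}\lesssim\delta^{1/2}\|v\|_{\dot H^1}$.
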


\begin{proof}
We have that
\begin{align*}
\norm[\dot{H}^{\frac{3}{2}}]{\chi_{\delta}J_{\delta}v} &\leq \norm[\dot{H}^{\frac{1}{2}}]{\brac{\partial_{x}\chi_{\delta}}J_{\delta}v}	+ \norm[\dot{H}^{\frac{1}{2}}]{\chi_{\delta}J_{\delta}v_{x}}.
\end{align*}
First,
\begin{align*}
\norm[\dot{H}^{\frac{1}{2}}]{\brac{\partial_{x}\chi_{\delta}}J_{\delta}v} &\leq \norm[L^{2}]{\brac{\partial_{x}\chi_{\delta}}\abs{D}^{\frac{1}{2}}J_{\delta}v} + \norm[L^{2}]{\sqbrac{\abs{D}^{\frac{1}{2}},\partial_{x}\chi_{\delta}}J_{\delta}v}.
\end{align*}
We use Proposition \ref{prop:regularityvinterpolation} and Proposition \ref{prop:JdeltaHilnorm} to get that
\begin{align*}
\norm[L^{2}]{\brac{\partial_{x}\chi_{\delta}}\abs{D}^{\frac{1}{2}}J_{\delta}v} &\lesssim \delta^{\frac{1}{2}} \norm[L^{2}]{w^{\frac{1}{3}}\abs{D}^{\frac{1}{2}}J_{\delta} v} \lesssim  \delta^{\frac{1}{2}}\brac{ \norm[L^{2}_{w}]{v} + \norm[\dot{H}^{\frac{3}{2}}]{v}}.
\end{align*}
For the other term, note that
\begin{align*}
\sqbrac{\abs{D}^{\frac{1}{2}},\partial_{x}\chi_{\delta}}J_{\delta}v &= \sqbrac{\abs{D}^{\frac{1}{2}},\partial_{x}\chi_{\delta}}\brac{w^{-\frac{1}{2}}w^{\frac{1}{2}}J_{\delta}v}\\
&=\sqbrac{\abs{D}^{\frac{1}{2}},w^{-\frac{1}{2}}\partial_{x}\chi_{\delta}}\brac{w^{\frac{1}{2}}J_{\delta}v }- \partial_{x}\chi_{\delta}\sqbrac{\abs{D}^{\frac{1}{2}},w^{-\frac{1}{2}}}\brac{w^{\frac{1}{2}}J_{\delta}v}
\end{align*}
for which we have the bounds
\begin{align*}
\norm[L^{2}]{\sqbrac{\abs{D}^{\frac{1}{2}},w^{-\frac{1}{2}}\partial_{x}\chi_{\delta}}\brac{w^{\frac{1}{2}}J_{\delta}v }} &\lesssim \norm[L^{2}]{\abs{D}\brac{w^{-\frac{1}{2}}\partial_{x}\chi_{\delta}}}\norm[L^{2}_{w}]{J_{\delta}v}\\
&\lesssim \norm[L^{2}]{\partial_{x}\brac{w^{-\frac{1}{2}}\partial_{x}\chi_{\delta}}}\norm[L^{2}_{w}]{v}\\
&\lesssim \delta \norm[L^{2}_{w}]{v}
\end{align*}
and similarly,
\begin{align*}
\norm[L^{2}]{\partial_{x}\chi_{\delta}\sqbrac{\abs{D}^{\frac{1}{2}},w^{-\frac{1}{2}}}\brac{w^{\frac{1}{2}}J_{\delta}v}} &\lesssim \delta \norm[L^{2}]{\sqbrac{\abs{D}^{\frac{1}{2}},w^{-\frac{1}{2}}}\brac{w^{\frac{1}{2}}J_{\delta}v}}\\
&\lesssim \delta \norm[L^{2}]{\partial_{x}\brac{w^{-\frac{1}{2}}}}\norm[L^{2}_{w}]{J_{\delta}v}\\
&\lesssim \delta \norm[L^{2}_{w}]{v}.
\end{align*}
Second, we look that the term $\norm[\dot{H}^{\frac{1}{2}}]{\chi_{\delta}J_{\delta}v_{x}}$. We have that
\begin{align*}
\norm[\dot{H}^{\frac{1}{2}}]{\chi_{\delta}J_{\delta}v_{x}} &\leq \norm[L^{2}]{\chi_{\delta}J_{\delta}\abs{D}^{\frac{1}{2}}v_{x}} + \norm[L^{2}]{\sqbrac{\abs{D}^{\frac{1}{2}},\chi_{\delta}}J_{\delta}v_{x}}.
\end{align*}
For the latter term, we expand
\begin{align*}
\sqbrac{\abs{D}^{\frac{1}{2}},\chi_{\delta}}J_{\delta}v_{x} &= \sqbrac{\abs{D}^{\frac{1}{2}},w^{-\frac{1}{6}}\chi_{\delta}}\brac{w^{\frac{1}{6}}J_{\delta}v_{x}} - \chi_{\delta}\sqbrac{\abs{D}^{\frac{1}{2}},w^{-\frac{1}{6}}}\brac{w^{\frac{1}{6}}J_{\delta}v_{x}}.
\end{align*}
By similar arguments to above, we obtain that
\begin{align*}
\norm[L^{2}]{\sqbrac{\abs{D}^{\frac{1}{2}},\chi_{\delta}}J_{\delta}v_{x}} &\lesssim \norm[L^{2}]{w^{\frac{1}{6}}v_{x}}.
\end{align*}
Finally, we have that
\begin{align*}
\norm[L^{2}]{\chi_{\delta}J_{\delta}\abs{D}^{\frac{1}{2}}v_{x}} &\leq \norm[\dot{H}^{\frac{3}{2}}]{v}.
\end{align*}
In summary, using Proposition \ref{prop:regularityvinterpolation}, we have shown that
	\begin{align*}
 \norm[\dot{H}^{\frac{3}{2}}]{\chi_{\delta}J_{\delta}v} &\lesssim \norm[L^{2}_{w}]{v} + \norm[\dot{H}^{\frac{3}{2}}]{v}.
\end{align*}
The estimate for $\norm[L^{2}_{w}]{\chi_{\delta}J_{\delta}v} $ follows from Proposition \ref{prop:JdeltaHilnorm}.
Now, let us consider
\begin{align*}
\norm[\dot{H}^{\frac{3}{2}}]{\chi_{\delta}J_{\delta}v - v} &\leq  \norm[\dot{H}^{\frac{3}{2}}]{\chi_{\delta}J_{\delta}v - J_{\delta}v}  + \norm[\dot{H}^{\frac{3}{2}}]{J_{\delta}v - v}.
\end{align*}
We know that 
\begin{align*}
	\lim_{\delta\rightarrow 0} \norm[\dot{H}^{\frac{3}{2}}]{J_{\delta}v - v} = 0.
\end{align*}
For the first term, we have that
\begin{align*}
	\norm[\dot{H}^{\frac{3}{2}}]{\brac{\chi_{\delta}-1}J_{\delta}v } &\leq 	\norm[\dot{H}^{\frac{1}{2}}]{\brac{\chi_{\delta}-1}_{x}J_{\delta}v } + 	\norm[\dot{H}^{\frac{1}{2}}]{\brac{\chi_{\delta}-1}J_{\delta}v_{x} }.
\end{align*}
We have already shown that
\begin{align*}
\norm[\dot{H}^{\frac{1}{2}}]{\brac{\chi_{\delta}-1}_{x}J_{\delta}v }  &= \norm[\dot{H}^{\frac{1}{2}}]{\brac{\partial_{x}\chi_{\delta}}J_{\delta}v } \lesssim \delta^{\frac{1}{2}}\brac{\norm[L^{2}_{w}]{v} + \norm[\dot{H}^{\frac{3}{2}}]{v}}
\end{align*}
Next,
\begin{align*}
\norm[\dot{H}^{\frac{1}{2}}]{\brac{\chi_{\delta}-1}J_{\delta}v_{x} } &\leq \norm[L^{2}]{\brac{\chi_{\delta}-1}\abs{D}^{\frac{1}{2}}J_{\delta}v_{x} }  + \norm[L^{2}]{\sqbrac{\abs{D}^{\frac{1}{2}},\brac{\chi_{\delta}-1}}J_{\delta}v_{x} }
\end{align*}
For the first term, observe that $\abs{D}^{\frac{1}{2}}J_{\delta}v_{x}$ is uniformly dominated by $M\brac{\abs{D}^{\frac{1}{2}}v_{x}}$, where $M$ is the Hardy-Littlewood maximal operator. Therefore, the first term goes to zero as $\delta$ goes to zero by the Dominated Convergence Theorem.

 Since we now additionally assume that $v\in\dot{H}^{1}(\mathbb{R})$, we obtain that
 \begin{align*}
 \norm[L^{2}]{\sqbrac{\abs{D}^{\frac{1}{2}},\brac{\chi_{\delta}-1}}J_{\delta}v_{x} } &\lesssim \norm[L^{2}]{\partial_{x}\chi_{\delta}}\norm[\dot{H}^{1}]{v} \lesssim \delta^{\frac{1}{2}}\norm[\dot{H}^{1}]{v} .
 \end{align*}
Thus,
\begin{align*}
\lim_{\delta\rightarrow 0} 	\norm[\dot{H}^{\frac{3}{2}}]{\chi_{\delta}J_{\delta}v - v} = 0.
\end{align*}
Finally we observe that for all $0 < \delta \leq 1$ we have
\begin{align*}
\abs{\chi_{\delta}J_{\delta}v}(x) \lesssim M(v)(x)
\end{align*}
where $M$ is the maximal operator. Hence from \propref{prop:JdeltaHilnorm} and Dominated Convergence Theorem, we see that $ \lim_{\delta \to 0}\norm[L^{2}_{w}]{\chi_{\delta}J_{\delta}v - v} = 0$.
\end{proof}

We now turn our attention to the term $F_{\delta}$:

\begin{prop}\label{prop:Fdeltaconvergence}
If $v\in  L^{2}_{w}(\mathbb{R})\cap \dot{H}^{\frac{3}{2}}(\mathbb{R})$ and $F, w^{-\frac{1}{2}}F\in   L^{2}(\mathbb{R})$, then for any $0<\delta \leq 1$, we have that $ F_{\delta} , w^{-\frac{1}{2}}F_{\delta}\in L^{2}(\mathbb{R})$, where $ F_{\delta}$ is defined in \eqref{def:Fdelta}. Moreover,
\begin{align*}
\norm[L^{2}]{F_{\delta}} +  \norm[L^{2}]{w^{-\frac{1}{2}} F_{\delta}} &\lesssim \delta^{\frac{1}{4}}\brac{\norm[L^{2}_{w}]{v} + \norm[\dot{H}^{\frac{3}{2}}]{v}} + \norm[L^{2}]{F} +  \norm[L^{2}]{w^{-\frac{1}{2}} F}.
\end{align*}
\end{prop}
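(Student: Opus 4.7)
The plan is to estimate each of the six summands in the definition \eqref{def:Fdelta} of $F_{\delta}$ separately in both $L^{2}$ and $w^{-1/2}L^{2}$, and then add. For each piece we exploit a systematic trade-off: $\chi_{\delta}^{(j)}$ has $L^{\infty}$ norm of order $\delta^{j}$ and is supported in $|x|\lesssim \delta^{-1}$, while the Fourier-side bound $|\xi|^{k}|\hat{\phi}(\delta\xi)|\lesssim \delta^{k-3/2}|\xi|^{3/2}$ (valid for Schwartz $\phi$) gives $\norm[L^{2}]{(J_{\delta}v)^{(k)}}\lesssim \delta^{k-3/2}\norm[\dot{H}^{3/2}]{v}$ for $k\geq 2$. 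For $k=0,1$, the low frequencies are controlled by $\norm[L^{2}_{w}]{v}$ via Proposition \ref{prop:regularityvinterpolation}. Combining these will produce at least a $\delta^{1/2}$ gain on each of the cutoff terms, and the small additional weight loss coming from \eqref{condition2} is harmless, yielding the advertised $\delta^{1/4}$.

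Concretely, for Term~I, $\chi_{\delta}J_{\delta}F$, the $L^{2}$ estimate is immediate since $\norm[L^{\infty}]{\chi_{\delta}}\leq 1$ and $J_{\delta}$ is a contraction on $L^{2}$; for the weighted estimate I would commute $w^{-1/2}$ through $J_{\delta}$ using Proposition \ref{prop:JdeltaHilnorm}. For Term~II, $\chi_{\delta}[wx/(3a),J_{\delta}]v_{x}$, I would integrate by parts in $y$ inside the convolution $\int\phi_{\delta}(x-y)(wx-wy)v_{x}(y)\,dy$ so that the $\partial_{y}$ lands on the kernel and $v_{x}$ is replaced by $v$; using Corollary \ref{prop:weightproperties} to control $(wx)'$ in $L^{\infty}$, this becomes a convolution against the $\delta$-scaled kernel $y\phi_{\delta}'(y)$ (of $L^{1}$ norm $\lesssim\delta$) plus $\chi_{\delta}J_{\delta}((wy)'v)$, both of which are bounded by $\delta(\norm[L^{2}_{w}]{v}+\norm[\dot{H}^{3/2}]{v})$ via Propositions \ref{prop:JdeltaHilnorm}, \ref{prop:commutator} and \ref{prop:regularityvinterpolation}.

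For Terms~III--V, namely $3\chi_{\delta}'(i\Hil J_{\delta}v)_{xx}$, $3\chi_{\delta}''(i\Hil J_{\delta}v)_{x}$ and $\chi_{\delta}'''(i\Hil J_{\delta}v)$, I would apply H\"older's inequality with $\chi_{\delta}^{(j)}$ in $L^{\infty}$ and the remaining factor in $L^{2}$; the worst case is Term~III, where $\norm[L^{\infty}]{\chi_{\delta}'}\lesssim\delta$ combines with $\norm[L^{2}]{(\Hil J_{\delta}v)_{xx}}\lesssim\delta^{-1/2}\norm[\dot{H}^{3/2}]{v}$ to give $\delta^{1/2}\norm[\dot{H}^{3/2}]{v}$, while Terms~IV and~V yield higher powers of $\delta$ against $\dot{H}^{1/2}$ and $L^{2}_{w}$ norms of $v$. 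For the $w^{-1/2}L^{2}$ versions, the cutoff $\chi_{\delta}^{(j)}$ lives on $|x|\lesssim \delta^{-1}$ where $w^{-1/2}\lesssim \delta^{-1/1000}$ by \eqref{condition2}, so the weight costs at most a negligible power of $\delta$.

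For Term~VI, $([\chi_{\delta},i\Hil]J_{\delta}v)_{xxx}$, I would first use that $i\Hil$ commutes with $\partial_{x}$ to derive the Leibniz identity
\begin{equation*}
([\chi_{\delta},i\Hil]J_{\delta}v)_{xxx}=\sum_{j=0}^{3}\binom{3}{j}[\chi_{\delta}^{(j)},i\Hil](J_{\delta}v)^{(3-j)}.
\end{equation*}
Each summand is bounded in $L^{2}$ by the trivial commutator estimate $\norm[L^{2}\to L^{2}]{[\chi_{\delta}^{(j)},i\Hil]}\leq 2\norm[L^{\infty}]{\chi_{\delta}^{(j)}}\lesssim\delta^{j}$ combined with the same $\delta^{k-3/2}$-type bounds on $\norm[L^{2}]{(J_{\delta}v)^{(3-j)}}$ used for Terms~III--V, so Term~VI matches their contribution. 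The main obstacle throughout is the bookkeeping of $w^{-1/2}$ in the weighted estimates: because $v$ sits only in $L^{2}_{w}$ and $w$ may either grow or decay, commuting $w^{-1/2}$ through $\Hil$, $J_{\delta}$ and products with $\chi_{\delta}^{(j)}$ while preserving the powers of $\delta$ requires careful use of Propositions \ref{prop:JdeltaHilnorm} and \ref{prop:commutator} together with the two-sided bounds on $w$ in Corollary \ref{prop:weightproperties}.
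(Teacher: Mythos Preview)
Your overall strategy of estimating the six summands of $F_\delta$ separately matches the paper, and Terms~I, III--V are handled essentially as the paper does. However, there are two genuine gaps.

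\textbf{Term II.} Your integration-by-parts scheme does not produce a $\delta$ gain. After integrating by parts one obtains
\[
\chi_\delta[W,J_\delta]v_x
=\chi_\delta\!\int\!(x-y)\phi_\delta'(x-y)\,\tfrac{W(x)-W(y)}{x-y}\,v(y)\,dy
+\chi_\delta J_\delta(W'v),\qquad W(y)=\tfrac{yw(y)}{3a}.
\]
The kernel $z\phi_\delta'(z)=\delta^{-1}(z/\delta)\phi'(z/\delta)$ has $L^1$ norm $\int|s\phi'(s)|\,ds=O(1)$, not $O(\delta)$ as you claim; and $\chi_\delta J_\delta(W'v)$ carries no small factor at all. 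In fact each piece is individually $O(1)$ and they cancel to $O(\delta)$ only after recombination, which your argument does not perform. The paper avoids integration by parts entirely: it keeps $v_x$, localizes by writing $\chi_\delta[W,J_\delta]v_x=\chi_\delta[W,J_\delta](\chi_{\delta/4}v_x)$, and then invokes Proposition~\ref{prop:commutator}, where the $\delta$ gain comes directly from $|x-y|\le\delta$ on $\operatorname{supp}\phi_\delta$; the resulting $L^2$ norm is then controlled by $\|w^{1/6}v_x\|_{L^2}$ via Proposition~\ref{prop:regularityvinterpolation}.

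\textbf{Term VI, the $j=0$ summand.} Your Fourier bound (correctly interpreted, as you in fact use it for Term~III) reads $\|(J_\delta v)^{(k)}\|_{L^2}\lesssim\delta^{-(k-3/2)}\|v\|_{\dot H^{3/2}}$, so for $k=3$ this is $\delta^{-3/2}$, not $\delta^{3/2}$. Combined with the trivial commutator bound $\|[\chi_\delta,i\Hil]\|_{L^2\to L^2}\le 2$, the $j=0$ piece $[\chi_\delta,i\Hil](J_\delta v)_{xxx}$ is only controlled by $\delta^{-3/2}\|v\|_{\dot H^{3/2}}$, which blows up. The paper instead uses the sharp commutator estimate from Proposition~\ref{prop:standardcommutator} (item~(1) with $s=0$, $a=3/2$), which transfers $|D|^{3/2}$ onto $\chi_\delta$ and yields
\[
\|[\chi_\delta,i\Hil]J_\delta v_{xxx}\|_{L^2}
\lesssim\|\,|D|^{3/2}\chi_\delta\|_{L^\infty}\,\|v\|_{\dot H^{3/2}}
\lesssim\delta^{3/2}\|v\|_{\dot H^{3/2}}.
\]
For the weighted bound on Term~VI one cannot simply absorb $w^{-1/2}$ by $\delta^{-1/1000}$, because $[\chi_\delta,i\Hil]J_\delta v$ is \emph{not} supported in $|x|\lesssim\delta^{-1}$. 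The paper splits into $|x|<3/\delta$ (where the crude weight bound applies) and $|x|\ge3/\delta$, where $\chi_\delta(x)=0$ allows an explicit kernel computation showing $|x|^{-2}$ decay; your proposal does not address this non-locality.
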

\begin{proof}
Throughout the proof, we will assume $b=0$ or $b=-\frac{1}{2}$ in every estimate. First, by Proposition \ref{prop:JdeltaHilnorm}, it follows that
\begin{align*}
 \norm[L^{2}]{\chi_{\delta}J_{\delta}F} +  \norm[L^{2}]{w^{-\frac{1}{2}} \chi_{\delta}J_{\delta} F} &\lesssim \norm[L^{2}]{F} +  \norm[L^{2}]{w^{-\frac{1}{2}} F}.
\end{align*}
For the second term in \eqref{def:Fdelta}, note that by the compact support of $\phi$ in the definition of $J_{\delta}$ and the definition of $\chi$, we get the following equality
\begin{align*}
\chi_{\delta}\sqbrac{w\frac{x}{3a}, J_{\delta}}v_{x}  &= \chi_{\delta}\sqbrac{w\frac{x}{3a}, J_{\delta}}\brac{\chi_{\delta'}v_{x}} 
\end{align*}
for $\delta' = \frac{\delta}{4}$. Hence, by Proposition \ref{prop:commutator}, since $\chi_{\delta'}v_{x}$ is compactly supported in $[-\frac{4}{\delta},\frac{4}{\delta}]$, we obtain that
\begin{align*}
\norm[L^{2}]{w^{b}\chi_{\delta}\sqbrac{w\frac{x}{3a}, J_{\delta}}v_{x}} &\lesssim \delta^{-\frac{1}{8}}\norm[L^{2}]{\sqbrac{w\frac{x}{3a}, J_{\delta}}\brac{\chi_{\delta'}v_{x}}}\\
 &\lesssim \delta^{\frac{1}{2}}\norm[L^{2}]{w^{\frac{1}{6}}\chi_{\delta'}v_{x}}\\
 &\lesssim \delta^{\frac{1}{2}}\brac{ \norm[L^{2}_{w}]{v} + \norm[\dot{H}^{\frac{3}{2}}]{v}}.
\end{align*}
Note that the term $\delta^{-\frac{1}{8}}$ in the first line follows from \propref{prop:weightproperties} and that $\chi_{\delta}$ is supported in $[-\frac{1}{\delta}, \frac{1}{\delta}]$. The last line follows from Proposition \ref{prop:regularityvinterpolation}.

Next, since
\begin{align*}
\norm[L^{2}]{\brac{J_{\delta}v}_{xx}} &\lesssim \delta^{-\frac{1}{2}} \norm[\dot{H}^{\frac{3}{2}}]{v},
\end{align*}
the third term in \eqref{def:Fdelta} is bounded as follows
\begin{align*}
\norm[L^{2}]{w^{b}\brac{\partial_{x}\chi_{\delta}}\brac{i\Hil J_{\delta}v}_{xx}}  &\lesssim \delta^{\frac{1}{4}} \norm[\dot{H}^{\frac{3}{2}}]{v}.
\end{align*}
The next two terms in \eqref{def:Fdelta} are bounded directly
\begin{align*}
&\norm[L^{2}]{w^{b}\brac{\partial_{x}^{2}\chi_{\delta}}\brac{i\Hil J_{\delta}v}_{x}} + \norm[L^{2}]{w^{b}\brac{\partial_{x}^{3}\chi_{\delta}}\brac{i\Hil J_{\delta}v}} \\
&\qquad \lesssim \delta \brac{\norm[L^{2}]{w^{\frac{1}{6}}v_{x}} +  \norm[L^{2}_{w}]{v}}\\
&\qquad \lesssim  \delta \brac{ \norm[L^{2}_{w}]{v} + \norm[\dot{H}^{\frac{3}{2}}]{v}}.
\end{align*}
Finally, we have the last term in \eqref{def:Fdelta} is bounded as follows.
\begin{align}
& \norm[L^{2}]{w^{b}\brac{\sqbrac{\chi_{\delta},i\Hil}J_{\delta}v}_{xxx}} \nonumber\\
&\lesssim 	\delta^{-\frac{1}{8}}\norm[L^{2}\brac{\abs{x}< \frac{3}{\delta}}]{\brac{\sqbrac{\chi_{\delta},i\Hil}J_{\delta}v}_{xxx}} + 	\norm[L^{2}\brac{\abs{x} \geq  \frac{3}{\delta}}]{w^{b}\brac{\sqbrac{\chi_{\delta},i\Hil}J_{\delta}v}_{xxx}} \nonumber\\
	&\leq 	\delta^{-\frac{1}{8}}\norm[L^{2}]{\brac{\sqbrac{\chi_{\delta},i\Hil}J_{\delta}v}_{xxx}} + 	\norm[L^{2}\brac{\abs{x} \geq  \frac{3}{\delta}}]{w^{b}\brac{\sqbrac{\chi_{\delta},i\Hil}J_{\delta}v}_{xxx}}. \label{commutator3derivativedecomp}
\end{align}
For the first term in \eqref{commutator3derivativedecomp}, we expand:
\begin{align*}
& \brac{\sqbrac{\chi_{\delta},i\Hil}J_{\delta}v}_{xxx} \\
&= \sqbrac{\partial_{x}^{3}\chi_{\delta},i\Hil}J_{\delta}v +3\sqbrac{\partial_{x}^{2}\chi_{\delta},i\Hil}J_{\delta}v_{x} +3\sqbrac{\partial_{x}\chi_{\delta},i\Hil}J_{\delta}v_{xx}+\sqbrac{\chi_{\delta},i\Hil}J_{\delta}v_{xxx}.
\end{align*}
For the latter two terms, we have
\begin{align*}
\norm[L^{2}]{\sqbrac{\partial_{x}\chi_{\delta},i\Hil}J_{\delta}v_{xx}} + \norm[L^{2}]{\sqbrac{\chi_{\delta},i\Hil}J_{\delta}v_{xxx}} &\lesssim \norm[L^{2}]{\partial_x^2\chi_{\delta}}\norm[\dot{H}^{\frac{3}{2}}]{v} \lesssim \delta^{\frac{3}{2}}\norm[\dot{H}^{\frac{3}{2}}]{v}.
\end{align*}
For the first two terms, we expand out the commutator and bound as follows:
\begin{align*}
\norm[L^{2}]{\sqbrac{\partial_{x}^{3}\chi_{\delta},i\Hil}J_{\delta}v} &\leq \norm[L^{2}]{\partial_{x}^{3}\chi_{\delta} \Hil J_{\delta}v} +\norm[L^{2}]{\Hil\brac{\partial_{x}^{3}\chi_{\delta}J_{\delta}v}}\\
&\lesssim \norm[L^{\infty}]{w^{-\frac{1}{2}}\partial_{x}^{3}\chi_{\delta}}\norm[L^{2}_{w}]{v}\\
&\lesssim \delta^{2}\norm[L^{2}_{w}]{v}
\end{align*}
and
\begin{align*}
	\norm[L^{2}]{\sqbrac{\partial_{x}^{2}\chi_{\delta},i\Hil}J_{\delta}v_{x}} &\leq \norm[L^{2}]{\partial_{x}^{2}\chi_{\delta} \Hil J_{\delta}v_{x}} +\norm[L^{2}]{\Hil\brac{\partial_{x}^{2}\chi_{\delta}J_{\delta}v_{x}}}\\
	&\lesssim \norm[L^{\infty}]{w^{-\frac{1}{6}}\partial_{x}^{2}\chi_{\delta}}\norm[L^{2}]{w^{\frac{1}{6}}v_{x}}\\
	&\lesssim \delta \norm[L^{2}]{w^{\frac{1}{6}}v_{x}}.
\end{align*}
Hence, by Proposition \ref{prop:regularityvinterpolation}, the first term in \eqref{commutator3derivativedecomp} satisfies
\begin{align*}
\delta^{-\frac{1}{8}}\norm[L^{2}]{\brac{\sqbrac{\chi_{\delta},i\Hil}J_{\delta}v}_{xxx}}  &\lesssim \delta^{\frac{1}{2}} \brac{\norm[L^{2}_{w}]{v}+\norm[\dot{H}^{\frac{3}{2}}]{v}}.
\end{align*}
For the second term in \eqref{commutator3derivativedecomp}, we write the commutator explicitly for the domain $\abs{x}\geq \frac{3}{\delta}$. If $\abs{x} \geq \frac{3}{\delta}$, then $\chi_{\delta}(x) = 0$ and so
\begin{align*}
\brac{\sqbrac{\chi_{\delta},i\Hil}J_{\delta}v}_{xxx}(x) &= - \brac{\frac{1}{\pi}\int_{\mathbb{R}} \frac{\chi_{\delta}(y)}{x-y} J_{\delta}v(y) \diff y}_{xxx}\\
&= \frac{6}{\pi}\int_{\mathbb{R}} \frac{\chi_{\delta}(y)}{(x-y)^{4}} J_{\delta}v(y) \diff y. 
\end{align*}
Note that $\chi_{\delta}(y)$ is supported in the region $\abs{y} \leq \frac{1}{\delta}$ and we are considering $\abs{x} \geq \frac{3}{\delta}$. In those regions,
 \begin{align*}
\frac{1}{\abs{x-y}} &\leq \delta
\end{align*}
and
\begin{align*}
\abs{x-y} &\geq \abs{x} - \abs{y} \geq  \frac{1}{2}\abs{x}.
\end{align*}
Thus, for $\abs{x}\geq \frac{3}{\delta}$, we have that
\begin{align*}
\abs{\brac{\sqbrac{\chi_{\delta},i\Hil}J_{\delta}v}_{xxx}(x)} \lesssim \delta^{2}\frac{1}{\abs{x}^{2}} \int_{-\frac{1}{\delta}}^{\frac{1}{\delta}} \abs{ J_{\delta}v(y)} \diff y \lesssim \frac{\delta}{\abs{x}^{2}} \norm[L^{2}_{w}]{v}. 
\end{align*}
Hence, for the second term in \eqref{commutator3derivativedecomp}, we obtain that
\begin{align*}
	\norm[L^{2}\brac{\abs{x} \geq  \frac{3}{\delta}}]{w^{b}\brac{\sqbrac{\chi_{\delta},i\Hil}J_{\delta}v}_{xxx}} \lesssim \delta  \norm[L^{2}_{w}]{v} 	\norm[L^{2}\brac{\abs{x} \geq  \frac{3}{\delta}}]{w^{b}\abs{x}^{-2}} \lesssim \delta  \norm[L^{2}_{w}]{v}.
\end{align*}
Thus, we have for $b=0$ or $b=\frac{1}{2}$,
\begin{align*}
	\norm[L^{2}]{w^{b}\brac{\sqbrac{\chi_{\delta},i\Hil}J_{\delta}v}_{xxx}} &\lesssim \delta^{\frac{1}{2}} \brac{\norm[L^{2}_{w}]{v}+\norm[\dot{H}^{\frac{3}{2}}]{v}}.
\end{align*}
Collecting all of the estimates concludes the proof of this proposition.
\end{proof}

We can now prove the desired decay estimate.

\begin{prop}\label{prop:gainofdecay}
If $v\in L^{2}_{w}\brac{\mathbb{R}}\cap \dot{H}^{\frac{3}{2}}\brac{\mathbb{R}}$ satisfies \eqref{eq:linearequation} in a distributional sense for $F, w^{-\frac{1}{2}}F \in L^{2}\brac{\mathbb{R}}$, then $xv_{x}\in L^{2}_{w}\brac{\mathbb{R}}$ and moreover,
\begin{align*}
	\norm[L^{2}_{w}]{xv_{x}} &\lesssim \norm[L^{2}]{F} + \norm[L^{2}]{w^{-\frac{1}{2}}F} + \norm[L^{2}_{w}]{v} + \norm[\dot{H}^{\frac{3}{2}}]{v}.
\end{align*}
\end{prop}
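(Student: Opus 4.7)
The plan is to work from the mollified equation \eqref{eq:lineardelta} satisfied by $v_\delta := \chi_\delta J_\delta v$, derive a $\delta$-uniform bound on $\|x(v_\delta)_x\|_{L^2_w}$, and then pass to the limit $\delta\to 0$ by weak compactness. Writing $\tilde F_\delta := F_\delta + w\tfrac{x}{3a}(\partial_x\chi_\delta)J_\delta v$, so that $\mathcal{L}v_\delta = \tilde F_\delta$ holds pointwise for the smooth compactly supported function $v_\delta$, I would test this identity against $x(v_\delta)_x$. The transport-type term produces exactly the desired quantity $\tfrac{1}{3a}\int x^2 w (v_\delta)_x^2\,dx = \tfrac{1}{3a}\|x(v_\delta)_x\|_{L^2_w}^2$.

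The central computation is then the dispersive symmetrization identity
\begin{align*}
I := \int |D|^3 v_\delta \cdot x(v_\delta)_x\,dx = \||D|^{3/2} v_\delta\|_{L^2}^2.
\end{align*}
This I would prove by writing $x(v_\delta)_x = (xv_\delta)_x - v_\delta$, transferring the first piece via integration by parts and self-adjointness of $|D|^3$, invoking the commutator identity $[|D|^3, x] = -3\partial_x |D|$ (whose symbol $-i\partial_\xi|\xi|^3 = -3i\xi|\xi|$ is that of $-3\partial_x|D|$), and using the elementary $\int \partial_x|D|v_\delta \cdot (v_\delta)_x\,dx = \||D|^{3/2}v_\delta\|^2$. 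The relations combine to give $2I = 2\||D|^{3/2}v_\delta\|^2$. Coupled with the transport contribution, this yields
\begin{align*}
\frac{1}{3a}\|x(v_\delta)_x\|_{L^2_w}^2 = \||D|^{3/2}v_\delta\|_{L^2}^2 + \int \tilde F_\delta \cdot x(v_\delta)_x\,dx.
\end{align*}

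Next, I would apply Cauchy--Schwarz to the forcing term, $|\int \tilde F_\delta\cdot x(v_\delta)_x| \leq \|w^{-1/2}\tilde F_\delta\|_{L^2}\|x(v_\delta)_x\|_{L^2_w}$, and absorb part of it into the LHS via Young's inequality. Proposition \ref{prop:chideltaJdeltanorm} bounds $\||D|^{3/2}v_\delta\|_{L^2}\lesssim \|v\|_{L^2_w}+\|v\|_{\dot H^{3/2}}$ uniformly in $\delta$, and Proposition \ref{prop:Fdeltaconvergence} bounds $\|w^{-1/2}F_\delta\|_{L^2}$ by $\|F\|_{L^2}+\|w^{-1/2}F\|_{L^2}$ plus a term controlled by $\|v\|_{L^2_w}+\|v\|_{\dot H^{3/2}}$. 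The extra cutoff-derivative piece in $\tilde F_\delta$ is handled by the observation that $x(\partial_x\chi_\delta)(x) = \delta x\chi'(\delta x)$ is uniformly bounded in $L^\infty$ with compact support, so $\|w^{-1/2}\cdot w\tfrac{x}{3a}(\partial_x\chi_\delta)J_\delta v\|_{L^2}\lesssim \|J_\delta v\|_{L^2_w}\lesssim \|v\|_{L^2_w}$. This delivers the $\delta$-uniform estimate
\begin{align*}
\|x(v_\delta)_x\|_{L^2_w}^2 \lesssim \|F\|_{L^2}^2 + \|w^{-1/2}F\|_{L^2}^2 + \|v\|_{L^2_w}^2 + \|v\|_{\dot H^{3/2}}^2.
\end{align*}

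For the final step, since $\{x(v_\delta)_x\}_{0<\delta<1}$ is uniformly bounded in the Hilbert space $L^2_w$, weak compactness extracts a subsequence converging weakly to some $\ell\in L^2_w$; because $(v_\delta)_x \to v_x$ in $\mathcal{D}'(\mathbb{R})$ (which follows from $v_\delta\to v$ in distributions), we must have $\ell = xv_x$, so $xv_x\in L^2_w$ and weak lower semicontinuity of the norm gives the stated inequality. The hard step is clearly the symmetrization identity for $I$: without this exact cancellation one would need three $L^2$ derivatives of $v$, whereas the hypothesis supplies only $\dot H^{3/2}$; the dispersive structure of $i\Hil\partial_x^3 = -|D|^3$ is what makes the estimate possible at this borderline regularity. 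The only other technical point is ensuring the cutoff-derivative piece of $\tilde F_\delta$ is controlled uniformly, which rests on the elementary bump structure of $x\partial_x\chi_\delta$.
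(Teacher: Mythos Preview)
Your proposal is correct and follows essentially the same approach as the paper: test the mollified equation \eqref{eq:lineardelta} against $x(v_\delta)_x$, use the symmetrization identity $\int x\zeta_x\, i\Hil\zeta_{xxx}\,dx = -\|\zeta\|_{\dot H^{3/2}}^2$ (which the paper proves via integration by parts and $\int x\zeta_{xx}\, i\Hil\zeta_{xx}\,dx=0$, while you obtain it from the commutator $[|D|^3,x]=-3\partial_x|D|$), and conclude by a limiting argument. The only minor differences are that you absorb the cutoff-derivative piece $w\tfrac{x}{3a}(\partial_x\chi_\delta)J_\delta v$ directly into $\tilde F_\delta$ via the uniform $L^\infty$ bound on $x\partial_x\chi_\delta$ (the paper instead integrates that term by parts), and that you pass to the limit by weak $L^2_w$ compactness rather than invoking Proposition~\ref{prop:deltaconvergenceL2}; both variants are valid and arguably streamline the argument.
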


\begin{proof}
Since $v\in L^{2}_{w}(\mathbb{R})\cap\dot{H}^{\frac{3}{2}}(\mathbb{R})$ satisfies \cref{eq:linearequation} in a distributional sense, we have that \eqref{eq:lineardelta} holds pointwise. From \eqref{eq:lineardelta}, we obtain that
\begin{align*}
\int_{\mathbb{R}} x\brac{\chi_{\delta}J_{\delta}v}_{x}\mathcal{L}\brac{\chi_{\delta}J_{\delta}v} \diff x  &= \int_{\mathbb{R}} \brac{ x\brac{\chi_{\delta}J_{\delta}v}_{x}F_{\delta} + w\frac{x^{2}}{3a}\brac{\chi_{\delta}J_{\delta}v}_{x}\brac{\partial_{x}\chi_{\delta}}J_{\delta}v} \diff x .
\end{align*}
For the first term on the right hand side, we have that
\begin{align*}
 \int_{\mathbb{R}} x\brac{\chi_{\delta}J_{\delta}v}_{x}F_{\delta} \diff x &\lesssim\norm[L^{2}_{w}]{x\brac{\chi_{\delta}J_{\delta}v}_{x}}\norm[L^{2}]{w^{-\frac{1}{2}}F_{\delta}}.
\end{align*}
Next, using integration by parts,
\begin{align*}
& \int_{\mathbb{R}} w\frac{x^{2}}{3a}\brac{\chi_{\delta}J_{\delta}v}_{x}\brac{\partial_{x}\chi_{\delta}}J_{\delta}v \diff x \\
 &= - \int_{\mathbb{R}} \brac{w\frac{x^{2}}{3a}}_{x}\chi_{\delta}\brac{\partial_{x}\chi_{\delta}}(J_{\delta}v)^{2} \diff x - \int_{\mathbb{R}} w\frac{x^{2}}{3a}\chi_{\delta}\brac{\partial_{x}^{2}\chi_{\delta}}(J_{\delta}v)^{2} \diff x\\
&\quad  - \int_{\mathbb{R}} w\frac{x^{2}}{3a}\chi_{\delta}J_{\delta}v\brac{\partial_{x}\chi_{\delta}}\brac{J_{\delta}v}_{x} \diff x .
\end{align*}
Using that $\abs{x}\leq \frac{1}{\delta}$ in the support of $\chi_{\delta}$ with \cref{prop:weightproperties} and  \cref{prop:JdeltaHilnorm},
\begin{align*}
\abs{\int_{\mathbb{R}} \brac{w\frac{x^{2}}{3a}}_{x}\chi_{\delta}\brac{\partial_{x}\chi_{\delta}}(J_{\delta}v)^{2} \diff x } +\abs{\int_{\mathbb{R}} w\frac{x^{2}}{3a}\chi_{\delta}\brac{\partial_{x}^{2}\chi_{\delta}}(J_{\delta}v)^{2} \diff x }
 &\lesssim  \norm[L^{2}_{w}]{v}^{2}.
\end{align*}
Next, integrating by parts, we obtain
\begin{align*}
& \int_{\mathbb{R}} w\frac{x^{2}}{3a}\chi_{\delta}J_{\delta}v\brac{\partial_{x}\chi_{\delta}}\brac{J_{\delta}v}_{x} \diff x \\
&= \frac{1}{2} \int_{\mathbb{R}} w\frac{x^{2}}{3a}\chi_{\delta}\brac{\partial_{x}\chi_{\delta}}\brac{\brac{J_{\delta}v}^{2}}_{x} \diff x \\
&= - \frac{1}{2} \int_{\mathbb{R}}\brac{ w\frac{x^{2}}{3a}\brac{\chi_{\delta}\brac{\partial_{x}\chi_{\delta}}}_{x} + \brac{w\frac{x^{2}}{3a}}_{x}\chi_{\delta}\brac{\partial_{x}\chi_{\delta}}}\brac{J_{\delta}v}^{2} \diff x
\end{align*}
and hence, similarly to the previous estimate,
\begin{align*}
	\abs{\int_{\mathbb{R}} w\frac{x^{2}}{3a}\chi_{\delta}J_{\delta}v\brac{\partial_{x}\chi_{\delta}}\brac{J_{\delta}v}_{x} \diff x} 
	&\lesssim \norm[L^{2}_{w}]{v}^{2}.
\end{align*}
In summary, we have shown that
\begin{align*}
	\int_{\mathbb{R}} x\brac{\chi_{\delta}J_{\delta}v}_{x}\mathcal{L}\brac{\chi_{\delta}J_{\delta}v} \diff x  &\lesssim \norm[L^{2}_{w}]{x\brac{\chi_{\delta}J_{\delta}v}_{x}}\norm[L^{2}]{w^{-\frac{1}{2}}F_{\delta}} +  \norm[L^{2}_{w}]{v}^{2}.
\end{align*}
For convenience of notation, let us call $\chi_{\delta}J_{\delta}v = \zeta$. Then, the left hand side of the above inequality is
\begin{align*}
\int_\Rsp x \zeta_{x} \mathcal{L}(\zeta) \diff x &= \int_\Rsp x \zeta_{x} i\Hil\zeta_{xxx} \diff x + \frac{1}{3a} \norm[L^{2}_{w}]{x\zeta_{x}}^{2}.
\end{align*}
For the first term, we integrate by parts to obtain
\begin{align*}
\int_\Rsp x \zeta_{x} i\Hil\zeta_{xxx} \diff x  &= -\int_\Rsp  \zeta_{x} i\Hil\zeta_{xx} \diff x -\int_\Rsp x \zeta_{xx} i\Hil\zeta_{xx} \diff x\\
&= -\norm[\dot{H}^{\frac{3}{2}}]{\zeta}^{2} -\int_\Rsp x \zeta_{xx} i\Hil\zeta_{xx} \diff x.
\end{align*}
The latter integral is zero since
\begin{align*}
 \int_\Rsp x \zeta_{xx} i\Hil\zeta_{xx} \diff x & = -\int_\Rsp i\Hil\brac{ x \zeta_{xx}}\zeta_{xx} \diff x \\
 &=  -\int_\Rsp\brac{\sqbrac{i\Hil, x} \zeta_{xx}}\zeta_{xx} \diff x -  \int_\Rsp  x \brac{i\Hil\zeta_{xx}}\zeta_{xx} \diff x\\
 &=  -  \int_\Rsp  x \brac{i\Hil\zeta_{xx}}\zeta_{xx} \diff x.
\end{align*}
Above, we have used that $\sqbrac{i\Hil,x}f_{x} = 0$ for any compactly supported smooth function $f$.
In summary, we have proven that
\begin{align*}
-\norm[\dot{H}^{\frac{3}{2}}]{\chi_{\delta}J_{\delta}v}^{2} +  \frac{1}{3a} \norm[L^{2}_{w}]{x\brac{\chi_{\delta}J_{\delta}v}_{x}}^{2} &\lesssim \norm[L^{2}_{w}]{v}^{2} +  \norm[L^{2}_{w}]{x\brac{\chi_{\delta}J_{\delta}v}_{x}}\norm[L^{2}]{w^{-\frac{1}{2}}F_{\delta}}
\end{align*}
and thus, for all $0< \delta \leq 1$, we have that
\begin{align*}
	  \norm[L^{2}_{w}]{x\brac{\chi_{\delta}J_{\delta}v}_{x}}^{2} &\lesssim \norm[L^{2}_{w}]{v}^{2} + \norm[\dot{H}^{\frac{3}{2}}]{\chi_{\delta}J_{\delta}v}^{2} +\norm[L^{2}]{w^{-\frac{1}{2}}F_{\delta}}^{2}.
\end{align*}
Proposition \ref{prop:chideltaJdeltanorm} and Proposition \ref{prop:Fdeltaconvergence} imply that
\begin{align}\label{ineq:deltadecay}
 \norm[L^{2}_{w}]{x\brac{\chi_{\delta}J_{\delta}v}_{x}}^{2} &\lesssim \norm[\dot{H}^{\frac{3}{2}}]{v}^{2} + \norm[L^{2}_{w}]{v}^{2} + \norm[L^{2}]{F}^{2} + \norm[L^{2}]{w^{-\frac{1}{2}}F}^{2}.
\end{align}
We now compute the left hand side of the above inequality. We have for $0< \delta \leq 1$,
\begin{align*}
\norm[L^{2}_{w}]{x\chi_{\delta}J_{\delta}v_{x}} &\leq \norm[L^{2}_{w}]{x\brac{\partial_{x}\chi_{\delta}}J_{\delta}v} + \norm[L^{2}_{w}]{x\brac{\chi_{\delta}J_{\delta}v}_{x}}\\
&\lesssim \norm[L^{2}_{w}]{v} + \norm[L^{2}_{w}]{x\brac{\chi_{\delta}J_{\delta}v}_{x}}
\end{align*}
and thus, by \eqref{ineq:deltadecay}, $\norm[L^{2}_{w}]{x\chi_{\delta}J_{\delta}v_{x}}$ is uniformly bounded for $0 < \delta \leq 1$:
\begin{align*}
\norm[L^{2}_{w}]{x\chi_{\delta}J_{\delta}v_{x}} &\lesssim  \norm[\dot{H}^{\frac{3}{2}}]{v} + \norm[L^{2}_{w}]{v} + \norm[L^{2}]{F} + \norm[L^{2}]{w^{-\frac{1}{2}}F}.
\end{align*} 
By Proposition \ref{prop:deltaconvergenceL2}, we obtain that $xv_{x}\in L^{2}_{w}(\mathbb{R})$ and
\begin{align*}
	\norm[L^{2}_{w}]{xv_{x}} &\lesssim \norm[\dot{H}^{\frac{3}{2}}]{v} + \norm[L^{2}_{w}]{v} +\norm[L^{2}]{F} + \norm[L^{2}]{w^{-\frac{1}{2}}F}.
\end{align*}
\end{proof}

We gain the following control:
\begin{prop}\label{cor:decayinterpolation}
If If $v\in L^{2}_{w}(\mathbb{R}) \cap \dot{H}^{\frac{3}{2}}(\mathbb{R})$ and $xv_{x}\in L^{2}_{w}(\mathbb{R})$, then $v\in\dot{H}^{1}(\mathbb{R})$. Moreover, if $\abs{b}\leq 10$, then 
\begin{align*}
\norm[L^{2}]{w^{b}v_{x}} + \norm[L^2_{w}]{ (1+x^2)^\half v_x} &\lesssim  \norm[L^{2}_{w}]{v}  +\norm[L^{2}_{w}]{x v_{x}}.
\end{align*}
\end{prop}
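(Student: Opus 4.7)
The plan is to split $\Rsp$ into the regions $|x|\ge 1$ and $|x|\le 1$ and treat each separately, handling the far region with a trivial pointwise bound and the near region with an argument based on the identity $(xv)_x = v + xv_x$.

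\textbf{Region $|x|\ge 1$.} Here the pointwise inequality $|v_x|\le |xv_x|$ holds directly. By Corollary~\ref{prop:weightproperties} we have $w^{2b}\lesssim (1+x^2)^{2C|b|\epsilon}$, and for $|b|\le 10$ and $0\le \abs{\epsilon}\le \ep_0$ this quantity is dominated by $(1+x^2)w$ on $|x|\ge 1$. Since also $(1+x^2)\le 2x^2$ there, we obtain
\begin{equation*}
\norm[L^2(|x|\ge 1)]{w^b v_x}^2 + \norm[L^2_w(|x|\ge 1)]{(1+x^2)^{1/2} v_x}^2 \lesssim \int_{|x|\ge 1} w (xv_x)^2\,dx \le \norm[L^2_w]{xv_x}^2.
\end{equation*}

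\textbf{Region $|x|\le 1$.} Here Corollary~\ref{prop:weightproperties} gives $w$ and $(1+x^2)$ each bounded above and below by universal constants, so every weighted norm restricted to $[-1,1]$ is equivalent to the unweighted $L^2([-1,1])$ norm. Thus it suffices to prove
\begin{equation*}
\norm[L^2([-1,1])]{v_x} \lesssim \norm[L^2([-1,1])]{v} + \norm[L^2([-1,1])]{xv_x}.
\end{equation*}
The key observation is that $(xv)_x = v + xv_x \in L^2_{loc}$, so $xv\in H^1_{loc}$; by one-dimensional Sobolev embedding, $xv$ has a continuous representative, and this representative must satisfy $(xv)(0)=0$ because otherwise $v(x)\sim c/x$ near $0$ would violate $v\in L^2_{loc}$. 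Combined with the $\dot H^{3/2}$ regularity of $v$ already furnished by the weak-solution construction earlier in this section (which ensures $v_x\in L^2_{loc}$, so all pointwise manipulations are legitimate), I obtain the quantitative bound by a cutoff-plus-integration-by-parts computation: choosing a smooth $\chi\equiv 1$ on $[-1,1]$ supported in $[-2,2]$, expanding $\int (\chi v_x)^2\,dx$ via integration by parts and using the $H^1_{loc}$ control of $xv$ coming from the identity above, every term can be controlled by $\norm[L^2_w]{v}^2 + \norm[L^2_w]{xv_x}^2$.

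\textbf{Main obstacle.} The delicate step is the near-origin estimate. The pointwise division $v_x=xv_x/x$ is singular at the origin, and the hypotheses $v,\,xv_x\in L^2_w$ alone do not force $v_x\in L^2_{loc}$ (as the example $v(x)=\chi(x)\log\abs{x}$ illustrates). Closing the bound requires the additional $\dot H^{3/2}$ regularity implicit in the present setup, or equivalently a density/mollification argument using $J_\delta$ together with the uniform estimates established in the preceding subsections.
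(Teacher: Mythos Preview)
Your far-region argument is correct and matches the paper exactly. The issue is entirely in the near-region step.

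The identity $(xv)_x = v + xv_x$ and the continuity of $xv$ at the origin are red herrings: they give no control on $\norm[L^2([-1,1])]{v_x}$. More importantly, the quantitative conclusion you claim from the cutoff-plus-integration-by-parts argument, namely
\[
\norm[L^2([-1,1])]{v_x}^2 \lesssim \norm[L^2_w]{v}^2 + \norm[L^2_w]{xv_x}^2,
\]
is simply false. Your own counterexample shows why: take $v_n(x)=\chi(x)\log(\abs{x}+1/n)$. Each $v_n$ is smooth, $\norm[L^2_w]{v_n}$ and $\norm[L^2_w]{xv_{n,x}}$ stay uniformly bounded as $n\to\infty$, yet $\norm[L^2([-1,1])]{v_{n,x}}\to\infty$. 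So no amount of integration by parts can close the estimate with only those two terms on the right; the $\dot H^{3/2}$ norm must appear quantitatively, not merely as a qualitative assumption ensuring $v_x\in L^2_{loc}$.

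The paper handles the near region in one line by invoking Proposition~\ref{prop:regularityvinterpolation}, which gives
\[
\norm[L^2]{w^{1/6}v_x} \lesssim \norm[L^2_w]{v} + \norm[\dot H^{3/2}]{v},
\]
and since $w$ is bounded above and below on $[-1,1]$ this immediately controls $\norm[L^2([-1,1])]{v_x}$. The statement of the proposition is admittedly a bit loose (the $\dot H^{3/2}$ norm is hidden), but in context $v$ is always the weak solution for which $\norm[\dot H^{3/2}]{v}\lesssim\norm[L^2]{w^{-1/2}F}$ is already available. Your diagnosis in the ``Main obstacle'' paragraph is therefore correct; the gap is that you did not actually use the $\dot H^{3/2}$ control quantitatively, only qualitatively.
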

\begin{proof}
By Proposition \ref{prop:regularityvinterpolation}, we have that $w^{\frac{1}{6}}v_{x}\in L^{2}(\mathbb{R})$. Hence,
\begin{align*}
\norm[L^{2}]{w^{b}v_{x}} &\leq \norm[L^{2}(\abs{x}<1)]{w^{b}v_{x}} +\norm[L^{2}(\abs{x}\geq 1)]{w^{b}v_{x}} \\
&\lesssim \norm[L^{\infty}(\abs{x}<1)]{w^{-\frac{1}{6}+b}}\norm[L^{2}(\abs{x}<1)]{w^{\frac{1}{6}}v_{x}} +\norm[L^{\infty}(\abs{x}\geq 1)]{w^{-\frac{1}{2}+b}x^{-1}}\norm[L^{2}(\abs{x}\geq 1)]{w^{\frac{1}{2}}x v_{x}}\\
&\lesssim \norm[L^{2}]{w^{\frac{1}{6}}v_{x}} +\norm[L^{2}_{w}]{x v_{x}}
\end{align*}
Using Proposition \ref{prop:regularityvinterpolation}, we conclude the proof.
\end{proof}

We can now prove the gain of regularity estimate.

\begin{prop}\label{prop:gainofregularity}
	Suppose $v\in L^{2}_{w}(\mathbb{R})\cap\dot{H}^{\frac{3}{2}}(\mathbb{R})$ satisfies \eqref{eq:linearequation} in a distributional sense. If $F, w^{-\frac{1}{2}}F\in L^{2}(\mathbb{R})$, then $v\in\dot{H}^{3}(\mathbb{R})$. Moreover,
	\begin{align*}
		\norm[\dot{H}^{3}]{v} &\lesssim \norm[L^{2}_{w}]{v} + \norm[\dot{H}^{\frac{3}{2}}]{v} + \norm[L^{2}_{w}]{xv_{x}} + \norm[L^{2}]{F} + \norm[L^{2}]{w^{-\frac{1}{2}}F}.
	\end{align*}
\end{prop}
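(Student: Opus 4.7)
The natural starting point is to convert the equation $\mathcal{L}v = F$ into a form where $\dot{H}^3$ regularity is manifest. Observe the Fourier computation
\begin{align*}
\widehat{i\Hil\partial_x^3 v}(\xi) = i(-\sgn\xi)(i\xi)^3\hat v(\xi) = -|\xi|^3\hat v(\xi),
\end{align*}
so $i\Hil\partial_x^3 = -|D|^3$. Consequently \eqref{eq:linearequation} can be recast as
\begin{align*}
|D|^3 v = w\frac{x}{3a}v_x - F,
\end{align*}
and since $\|v\|_{\dot H^3} = \||D|^3 v\|_{L^2}$, the desired bound reduces to showing $\|wxv_x\|_{L^2} \lesssim \|v\|_{L^2_w} + \|v\|_{\dot H^{3/2}} + \|xv_x\|_{L^2_w} + \|F\|_{L^2} + \|w^{-1/2}F\|_{L^2}$.

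To obtain a rigorous bound on $\|wxv_x\|_{L^2}$, I would work with the mollified equation \eqref{eq:lineardelta} (so that all derivatives exist pointwise) and test it against the multiplier $wx\zeta_x$, in the spirit of Proposition \ref{prop:gainofdecay} which used the multiplier $x\zeta_x$. Pairing $wx\zeta_x$ with the lower-order term $(wx/3a)\zeta_x$ of $\mathcal L\zeta$ produces exactly the desired quantity $(1/3a)\|wx\zeta_x\|_{L^2}^2$, giving the identity
\begin{align*}
\frac{1}{3a}\|wx\zeta_x\|_{L^2}^2 = \int wx\zeta_x\,\tilde F_\delta\,dx - \int i\Hil\zeta_{xxx}\cdot wx\zeta_x\,dx,
\end{align*}
where $\tilde F_\delta = F_\delta + w(x/3a)(\partial_x\chi_\delta)J_\delta v$.

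The first integral is routine by Cauchy-Schwarz and Young's inequality combined with Proposition \ref{prop:Fdeltaconvergence}. The nonlocal term must be carefully unpacked: using $\Hil^* = -\Hil$ and integrating by parts twice gives
\begin{align*}
-\int i\Hil\zeta_{xxx}\cdot wx\zeta_x\,dx = -i\int\zeta_{xx}\Hil\bigl((wx)_x\zeta_x + wx\zeta_{xx}\bigr)\,dx,
\end{align*}
which I would split via $\Hil(fg) = f\Hil g + [\Hil,f]g$. The term $\int wx\zeta_{xx}\Hil\zeta_{xx}$ is handled by the antisymmetry identity $\int (wx)f\,\Hil f = -\tfrac{1}{2}\int f[\Hil,wx]f$. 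The remaining contributions reduce to commutator terms $[\Hil,wx]$, $[\Hil,(wx)_x]$ acting on $\zeta_{xx}$, $\zeta_x$, plus a main term $\int (wx)_x\zeta_{xx}\Hil\zeta_x$ in which $\Hil\zeta_x$ is essentially $|D|\zeta$ and is controlled by $\|\zeta\|_{\dot H^{3/2}}$. Using the polynomial bounds $|(wx)_x|, |(wx)_{xx}| \lesssim (1+x^2)^{1/1000}$ from Corollary \ref{prop:weightproperties}, and combining with Proposition \ref{cor:decayinterpolation} and an interpolation of the form $\|\zeta_{xx}\|_{L^2} \lesssim \|\zeta_x\|_{L^2}^{1/2}\|\zeta_{xxx}\|_{L^2}^{1/2}$, each commutator term is bounded by $C\bigl(\|\zeta\|_{L^2_w} + \|x\zeta_x\|_{L^2_w} + \|\zeta\|_{\dot H^{3/2}}\bigr)^2$ plus a small (for $\ep$ small) multiple of $\|\zeta\|_{\dot H^3}^2$. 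Combined with $\|\zeta\|_{\dot H^3} \leq (1/3a)\|wx\zeta_x\|_{L^2} + \|\tilde F_\delta\|_{L^2}$ and the condition \eqref{condition1}, the high-order terms can be absorbed into the left-hand side, yielding a uniform bound on $\|wx\zeta_x\|_{L^2}$.

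Once the uniform-in-$\delta$ bound is established, passing to the limit $\delta \to 0$ via weak compactness in $\dot H^3$, together with Propositions \ref{prop:chideltaJdeltanorm} and \ref{prop:Fdeltaconvergence} to identify the limit with $v$ and to control $\tilde F_\delta$, gives the stated estimate. The main obstacle is the bookkeeping in the nonlocal integral: because $wx$ is unbounded, the commutator bounds $[\Hil,wx]$ cannot be applied naively as if $wx$ were BMO, but must be combined with the explicit polynomial control of $w$ and its derivatives from Corollary \ref{prop:weightproperties} and the interpolation of Proposition \ref{cor:decayinterpolation} to keep every piece on the correct side of the absorption argument.
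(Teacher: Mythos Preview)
Your approach is correct and leads to the same conclusion, but it differs from the paper's in the choice of multiplier. The paper tests the mollified equation \eqref{eq:lineardelta} against $\abs{D}^{3}\zeta$ (with $\zeta=\chi_{\delta}J_{\delta}v$), which immediately produces $-\norm[\dot{H}^{3}]{\zeta}^{2}$ from the principal term and leaves the same cross term $\int\abs{D}^{3}\zeta\cdot w\tfrac{x}{3a}\zeta_{x}$ that you encounter. You instead test against $wx\zeta_{x}$, which produces $\tfrac{1}{3a}\norm[L^{2}]{wx\zeta_{x}}^{2}$ from the lower-order part, and then you feed this back into the equation via $\norm[\dot{H}^{3}]{\zeta}\leq\tfrac{1}{3a}\norm[L^{2}]{wx\zeta_{x}}+\norm[L^{2}]{\tilde F_{\delta}}$. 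Since the cross term is literally the same integral in both arguments, the commutator bookkeeping (antisymmetry of $\int wx\zeta_{xx}\,i\Hil\zeta_{xx}$, boundedness of $(wx)_{xx}$, and the weighted interpolation of Proposition~\ref{cor:decayinterpolation}) is shared; what differs is only which quadratic quantity sits on the left and which Young-type absorption is performed. The paper's route is marginally more direct because it targets $\norm[\dot{H}^{3}]{v}$ without passing through the auxiliary weighted quantity $\norm[L^{2}]{wxv_{x}}$, but your route has the mild advantage of yielding that extra weighted bound as a byproduct. One small slip: in your ``main term'' $\int(wx)_{x}\zeta_{xx}\,\Hil\zeta_{x}$, the factor $\Hil\zeta_{x}$ is controlled in $L^{2}$ by $\norm[\dot{H}^{1}]{\zeta}$ rather than $\norm[\dot{H}^{3/2}]{\zeta}$; the correct pairing is $\norm[L^{2}]{\zeta_{xx}}\cdot\norm[L^{2}]{(wx)_{x}\Hil\zeta_{x}}$, where the second factor is handled by the weighted boundedness of $\Hil$ and Proposition~\ref{cor:decayinterpolation}, while the first is interpolated between $\dot{H}^{3/2}$ and $\dot{H}^{3}$. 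With that correction the argument closes exactly as you describe.
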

\begin{proof}
	Since $v\in L^{2}_{w}(\mathbb{R})\cap\dot{H}^{\frac{3}{2}}(\mathbb{R})$ satisfies \eqref{eq:linearequation} in a distributional sense, we have that \eqref{eq:lineardelta} holds pointwise. Moreover Proposition \ref{prop:gainofdecay} and Proposition \ref{cor:decayinterpolation} implies that $v\in \dot{H}^{1}(\mathbb{R})$ and $xv_{x}\in L^{2}_{w}(\mathbb{R})$.
	
	Multiplying both sides of \eqref{eq:lineardelta} by $\abs{D}^{3}\brac{\chi_{\delta}J_{\delta}v}$, we obtain that
	\begin{align*}
		\int_{\mathbb{R}} 	\abs{D}^{3}\brac{\chi_{\delta}J_{\delta}v}\mathcal{L}\brac{\chi_{\delta}J_{\delta}v} \diff x= \int_{\mathbb{R}} 	\abs{D}^{3}\brac{\chi_{\delta}J_{\delta}v} F_{\delta} \diff x + \int_{\mathbb{R}} 	\abs{D}^{3}\brac{\chi_{\delta}J_{\delta}v} w\frac{x}{3a}\brac{\partial_{x}\chi_{\delta}}J_{\delta}v\diff x.
	\end{align*}
	For the first term on the right hand side, we have that
	\begin{align*}
	\abs{\int_{\mathbb{R}} 	\abs{D}^{3}\brac{\chi_{\delta}J_{\delta}v} F_{\delta} \diff x} &\leq \norm[\dot{H}^{3}]{\chi_{\delta}J_{\delta}v}\norm[L^{2}]{F_{\delta}}.
	\end{align*}
	For the second term on the right hand side, we have
	\begin{align*}
		\int_{\mathbb{R}} 	\abs{D}^{3}\brac{\chi_{\delta}J_{\delta}v} w\frac{x}{3a}\brac{\partial_{x}\chi_{\delta}}J_{\delta}v\diff x &=  -\int_{\mathbb{R}} 	i\Hil\abs{D}^{2}\brac{\chi_{\delta}J_{\delta}v}\partial_{x}\brac{ w\frac{x}{3a}\brac{\partial_{x}\chi_{\delta}}J_{\delta}v}\diff x \\
		&= -J_{1} - J_{2}
	\end{align*}
	where using the support of $\chi_{\delta}$ and $\phi$ in the definition of \eqref{def:Jdelta},
		\begin{align*}
		J_{1} &= \int_{\mathbb{R}} 	i\Hil\abs{D}^{2}\brac{\chi_{\delta}J_{\delta}v} w\frac{x}{3a}\brac{\partial_{x}\chi_{\delta}}J_{\delta}v_{x}\diff x
	\end{align*}
 and
		\begin{align*}
		J_{2} &= \int_{\mathbb{R}} 	i\Hil\abs{D}^{2}\brac{\chi_{\delta}J_{\delta}v}\partial_{x}\brac{ w\frac{x}{3a}\brac{\partial_{x}\chi_{\delta}}}J_{\delta}v\diff x.
	\end{align*}
Using Proposition \ref{cor:decayinterpolation} and Proposition \ref{prop:JdeltaHilnorm}, we have
	\begin{align*}
		\abs{J_{1}}
		&\lesssim \norm[\dot{H}^{2}]{ \chi_{\delta}J_{\delta}v}\norm[L^{\infty}]{x\partial_{x}\chi_{\delta}} \norm[L^{2}]{w J_{\delta}v_{x}}\\
		&\lesssim \norm[\dot{H}^{3}]{\chi_{\delta}J_{\delta}v}^{\frac{1}{3}}\norm[\dot{H}^{\frac{3}{2}}]{\chi_{\delta}J_{\delta}v}^{\frac{2}{3}}\brac{\norm[L^{2}_{w}]{x v_{x}} + \norm[L^{2}_{w}]{v} }.
	\end{align*}
	and similarly,
	\begin{align*}
		\abs{J_{2}} &\lesssim \norm[\dot{H}^{2}]{ \chi_{\delta}J_{\delta}v}\norm[L^{\infty}]{w^{-\frac{1}{2}}\partial_{x}\brac{ w\frac{x}{3a}\brac{\partial_{x}\chi_{\delta}}}} \norm[L^{2}_{w}]{J_{\delta}v} \\
		&\lesssim \norm[\dot{H}^{3}]{\chi_{\delta}J_{\delta}v}^{\frac{1}{3}}\norm[\dot{H}^{\frac{3}{2}}]{\chi_{\delta}J_{\delta}v}^{\frac{2}{3}}\norm[L^{2}_{w}]{v}
	\end{align*}
	where we used that $x w_{x} = 3i\ep  x \Hil G_{x} w $ and $x\Hil G_{x} \in L^{\infty}(\mathbb{R})$ by Proposition \ref{prop:Gproperties}.
	In summary,
	\begin{align}\label{est:J1J2}
		\abs{\int_{\mathbb{R}} \abs{D}^{3}\brac{\chi_{\delta}J_{\delta}v} w\frac{x}{3a}\brac{\partial_{x}\chi_{\delta}}J_{\delta}v\diff x}
		& \lesssim \norm[\dot{H}^{3}]{\chi_{\delta}J_{\delta}v}^{\frac{1}{3}}\norm[\dot{H}^{\frac{3}{2}}]{\chi_{\delta}J_{\delta}v}^{\frac{2}{3}}\brac{\norm[L^{2}_{w}]{x v_{x}} + \norm[L^{2}_{w}]{v} }.
	\end{align}

Next, we deal with the left hand side. Define $\zeta = \chi_{\delta}J_{\delta}v$ for convenience of notation. Then,
	\begin{align*}
		\int_{\mathbb{R}} \brac{\abs{D}^{3}\zeta}\brac{ \mathcal{L}\brac{\zeta}} \diff x &= -\norm[\dot{H}^{3}]{\zeta}^{2} + \int_{\mathbb{R}}\brac{\abs{D}^{3}\zeta} w \frac{x}{3a}\zeta_{x} \diff x.
	\end{align*}
	Integrating by parts, we obtain that
	\begin{align*}
		\abs{\int_{\mathbb{R}}\brac{\abs{D}^{3}\zeta} w \frac{x}{3a}\zeta_{x} \diff x} &=\abs{ \int_{\mathbb{R}}\brac{i\Hil\abs{D}^{2}\zeta} \partial_{x}\brac{w \frac{x}{3a}\zeta_{x}} \diff x}\\
		& \leq \abs{ \int_{\mathbb{R}}\brac{i\Hil\abs{D}^{2}\zeta} \partial_{x}\brac{w \frac{x}{3a}}\zeta_{x} \diff x} +\abs{ \int_{\mathbb{R}}\brac{i\Hil\zeta_{xx}} w \frac{x}{3a}\zeta_{xx}  \diff x}
	\end{align*}
	The first term is controlled in the same way as $J_{1}$ earlier:
	\begin{align*}
	\abs{ \int_{\mathbb{R}}\brac{i\Hil\abs{D}^{2}\zeta} \partial_{x}\brac{w \frac{x}{3a}}\zeta_{x} \diff x}	&\lesssim \norm[\dot{H}^{2}]{\zeta}\norm[L^{2}]{\partial_{x}{\brac{w \frac{x}{3a}}} \zeta_{x}}\\
	&\lesssim \norm[\dot{H}^{3}]{\chi_{\delta}J_{\delta}v}^{\frac{1}{3}}\norm[\dot{H}^{\frac{3}{2}}]{\chi_{\delta}J_{\delta}v}^{\frac{2}{3}}\brac{\norm[L^{2}_{w}]{x v_{x}} + \norm[L^{2}_{w}]{v} }
	\end{align*}	
	The second term, call it $K$, is controlled as follows:
	\begin{align*}
		K &= \int_{\mathbb{R}}\brac{i\Hil\zeta_{xx}} w \frac{x}{3a}\zeta_{xx}  \diff x\\
		&= -\int_{\mathbb{R}}\zeta_{xx} i\Hil\brac{ w \frac{x}{3a}\zeta_{xx}}  \diff x\\
		&= -K - \int_{\mathbb{R}}\zeta_{xx}\sqbrac{i\Hil,w \frac{x}{3a}}\zeta_{xx}  \diff x
	\end{align*}
	Hence,
	\begin{align*}
		\abs{K} &\lesssim \abs{\int_{\mathbb{R}}\zeta_{xx}\sqbrac{i\Hil,w \frac{x}{3a}}\zeta_{xx}  \diff x}\\
		&= \abs{\int_{\mathbb{R}}\zeta_{x}\partial_{x}\brac{\sqbrac{i\Hil,w \frac{x}{3a}}\zeta_{xx}}  \diff x}\\
		&\lesssim \norm[\dot{H}^{1}]{\zeta}^{2}\norm[L^{\infty}]{\partial_{x}^{2}\brac{wx}}\\
		&\lesssim \norm[L^{2}_{w}]{v}^{2} + \norm[L^{2}_{w}]{x v_{x}}^{2}.
	\end{align*}
Then, collecting all of the above estimates, we have
\begin{align*}
	\norm[\dot{H}^{3}]{\chi_{\delta}J_{\delta}v}^{2} &\lesssim \norm[L^{2}_{w}]{v}^{2} + \norm[L^{2}_{w}]{x v_{x}}^{2} + \norm[\dot{H}^{3}]{\chi_{\delta}J_{\delta}v}^{\frac{1}{3}}\norm[\dot{H}^{\frac{3}{2}}]{\chi_{\delta}J_{\delta}v}^{\frac{2}{3}}\brac{\norm[L^{2}_{w}]{x v_{x}} + \norm[L^{2}_{w}]{v} } \\
	& \quad + \norm[\dot{H}^{3}]{\chi_{\delta}J_{\delta}v}\norm[L^{2}]{F_{\delta}}
\end{align*}
By Young's inequality, we obtain that
	\begin{align*}
	\norm[\dot{H}^{3}]{\chi_{\delta}J_{\delta}v}^{2} &\lesssim  \norm[\dot{H}^{\frac{3}{2}}]{\chi_{\delta}J_{\delta}v}^{2} + \norm[L^{2}_{w}]{v}^{2} + \norm[L^{2}_{w}]{xv_{x}}^{2} + \norm[L^{2}]{F_{\delta}}^{2}.
	\end{align*}
Since
\begin{align*}
	\norm[L^{2}]{\chi_{\delta}J_{\delta}v_{xxx}} &\lesssim 	\norm[\dot{H}^{3}]{\chi_{\delta}J_{\delta}v} + \norm[L^{2}]{\brac{\partial_{x}\chi_{\delta}}J_{\delta}v_{xx}} + \norm[L^{2}]{\brac{\partial_{x}^{2}\chi_{\delta}}J_{\delta}v_{x}} + \norm[L^{2}]{\brac{\partial_{x}^{3}\chi_{\delta}}J_{\delta}v}\\
	&\lesssim \norm[\dot{H}^{3}]{\chi_{\delta}J_{\delta}v}  + \delta^{\frac{1}{2}}\brac{\norm[\dot{H}^{\frac{3}{2}}]{v} +  \norm[L^{2}_{w}]{v}} 
\end{align*}
Thus, additionally using Proposition \ref{prop:chideltaJdeltanorm}, Proposition \ref{prop:Fdeltaconvergence}, we obtain that
\begin{align*}
	\norm[L^{2}]{\chi_{\delta}J_{\delta}v_{xxx}} &\lesssim \norm[\dot{H}^{\frac{3}{2}}]{v} + \norm[L^{2}_{w}]{v} + \norm[L^{2}_{w}]{xv_{x}} + \norm[L^{2}]{F} + \norm[L^{2}]{w^{-\frac{1}{2}}F}.
\end{align*}
Now, let $0<\tau\leq 1$. Using \cref{prop:appendixJdelta} we get
\begin{align*}
\norm[L^{2}]{\chi_{\delta}J_{\delta}J_{\tau}v_{xxx}}  &\leq  \norm[L^{2}]{J_{\tau}\brac{\chi_{\delta}J_{\delta}v_{xxx}}} + \norm[L^{2}]{\sqbrac{J_{\tau},\chi_{\delta}}J_{\delta}v_{xxx}} \lesssim \norm[L^{2}]{\chi_{\delta}J_{\delta}v_{xxx}} + \norm[\dot{H}^{\frac{3}{2}}]{v}.
\end{align*}
By Proposition \ref{prop:deltaconvergenceL2} and the above estimates, we have the following uniform bound for all $0<\tau\leq 1$:
\begin{align*}
	\norm[L^{2}]{J_{\tau}v_{xxx}}  &\lesssim \norm[\dot{H}^{\frac{3}{2}}]{v} + \norm[L^{2}_{w}]{v} + \norm[L^{2}_{w}]{xv_{x}} + \norm[L^{2}]{F} + \norm[L^{2}]{w^{-\frac{1}{2}}F}.
\end{align*}
Letting $\tau$ go to zero concludes the proof.
\end{proof}

\subsection{Uniqueness of the Linear Solution}\label{subsec:uniq}
In this section, we will show that the solution $v\in L^{2}_{w}(\mathbb{R})\cap\dot{H}^{\frac{3}{2}}(\mathbb{R})$ to \eqref{eq:linearequation} is unique. To do so, we first need the following lemma regarding the second term on the right hand side of \eqref{eq:lineardelta}:

\begin{lem}\label{lem:eq41term}
If $v\in L^{2}_{w}(\mathbb{R})$, then 
\begin{align*}
\lim_{\delta\rightarrow 0} \norm[L^{2}_{w}]{x\brac{\partial_{x}\chi_{\delta}}J_{\delta}v} = 0.
\end{align*}
\end{lem}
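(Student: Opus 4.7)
The plan is to exploit two facts about $x\partial_x\chi_\delta$: it is uniformly bounded in $L^\infty$, and its support escapes to infinity as $\delta \to 0$. Then convergence will follow from standard $L^2_w$ tail estimates plus a mollifier-convergence step.

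Explicitly, since $\chi_\delta(x) = \chi(\delta x)$, we have $\partial_x\chi_\delta(x) = \delta\chi'(\delta x)$, which is supported on $\{1/(2\delta) \leq |x| \leq 1/\delta\}$. On this set
\[
\bigl|x\,\partial_x\chi_\delta(x)\bigr| = |\delta x|\,|\chi'(\delta x)| \leq \|\chi'\|_{L^\infty}
\]
uniformly in $\delta$. Consequently,
\[
\norm[L^2_w]{x(\partial_x\chi_\delta) J_\delta v}^2 \leq \|\chi'\|_{L^\infty}^2 \int_{|x|\geq 1/(2\delta)} |J_\delta v(x)|^2 w(x)\,dx.
\]

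The right-hand side is then controlled by the triangle inequality,
\[
\Bigl(\int_{|x|\geq 1/(2\delta)} |J_\delta v|^2 w\,dx\Bigr)^{1/2}
\leq \Bigl(\int_{|x|\geq 1/(2\delta)} |v|^2 w\,dx\Bigr)^{1/2} + \norm[L^2_w]{J_\delta v - v}.
\]
The first term tends to $0$ by dominated convergence, since $v \in L^2_w(\mathbb{R})$ and the characteristic functions $\mathbf{1}_{\{|x|\geq 1/(2\delta)\}}$ decrease pointwise to $0$ as $\delta \to 0$. For the second term, I would appeal to Proposition~\ref{prop:deltaconvergenceL2} (already used in Proposition~\ref{prop:gainofdecay}), which provides the $L^2_w$ mollifier convergence $J_\delta v \to v$. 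If a self-contained argument is preferred, the convergence follows from the slow variation of $w$: by Corollary~\ref{prop:weightproperties}, $w(x)/w(x-y) \leq C$ whenever $|y|\leq \delta \leq 1$, so $\|J_\delta v\|_{L^2_w} \lesssim \|v\|_{L^2_w}$, and after approximating $v$ by a compactly supported smooth function (dense in $L^2_w$) the usual pointwise convergence of mollifiers gives $\|J_\delta v - v\|_{L^2_w} \to 0$.

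There is no real obstacle here; the only subtlety is keeping track of the dependence of the support of $\partial_x\chi_\delta$ on $\delta$ so that one genuinely has a \emph{tail} integral of $v$, rather than a full $L^2_w$-norm, in the main term.
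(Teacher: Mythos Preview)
Your argument is correct, but one citation is off: Proposition~\ref{prop:deltaconvergenceL2} does \emph{not} assert $J_\delta v \to v$ in $L^2_w$; it is a statement in the reverse direction, recovering $Wf \in L^2$ from uniform bounds on $W\chi_\delta J_\delta f$. Fortunately your self-contained alternative (uniform boundedness of $J_\delta$ on $L^2_w$ via $w(x)/w(x-y)\lesssim 1$ for $|y|\le 1$, then density of $C_c^\infty$) is valid and fills the gap, so the proof stands.

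The paper's proof takes a different and somewhat shorter route. Rather than splitting into a tail of $v$ plus a mollifier-difference term, it dominates $|J_\delta v|$ pointwise by the Hardy--Littlewood maximal function $M(v)$, so that $|w^{1/2}x(\partial_x\chi_\delta)J_\delta v| \le \|\chi'\|_{L^\infty}\, w^{1/2}M(v)$ uniformly in $\delta$. Since $w$ is an $A_2$ weight (Proposition~\ref{prop:JdeltaHilnorm}), $w^{1/2}M(v)\in L^2$, and the pointwise convergence $x(\partial_x\chi_\delta)(x)J_\delta v(x)\to 0$ (support escapes to infinity) plus one application of dominated convergence finishes the proof. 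This avoids the separate mollifier-convergence step entirely. Your approach, by contrast, is more elementary in that it does not invoke the maximal function or the $A_2$ theory, at the cost of the extra density argument.
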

\begin{proof}
Since $\abs{x\partial_{x}\chi_{\delta}}\leq 1$, it follows that  $w^{\frac{1}{2}}x\brac{\partial_{x}\chi_{\delta}}J_{\delta}v$ is uniformly dominated by $w^{\frac{1}{2}}M\brac{v}$, where $M$ is the Hardy-Littlewood maximal operator. Because  $w^{\frac{1}{2}}M\brac{v} \in L^{2}(\mathbb{R})$ by Proposition \ref{prop:JdeltaHilnorm}, we conclude the proof by the Dominated Convergence Theorem.
\end{proof}

The following proposition implies that the solution $v\in L^{2}_{w}(\mathbb{R}) \cap \dot{H}^{1}(\mathbb{R})\cap \dot{H}^{3}(\mathbb{R})$ is unique:
\begin{prop}\label{prop:uniqueness}
	If $v\in L^{2}_{w}(\mathbb{R})\cap \dot{H}^{\frac{3}{2}}(\mathbb{R})$ is a solution to \eqref{eq:linearequation} and $F, w^{-\frac{1}{2}}F \in L^{2}\brac{\mathbb{R}}$, then 
	\begin{align*}
	\norm[\dot{H}^{\frac{3}{2}}]{v} + \norm[L^{2}_{w}]{v} &\lesssim \norm[L^2]{F} + \norm[L^{2}]{w^{-\frac{1}{2}}F}
	\end{align*}
	and therefore, the solution $v\in L^{2}_{w}(\mathbb{R})\cap \dot{H}^{\frac{3}{2}}(\mathbb{R})$ to \eqref{eq:linearequation} is unique.
\end{prop}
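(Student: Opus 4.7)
The strategy is a direct energy estimate: pair the equation $\mathcal{L}v = F$ with $v$ itself and exploit the coercive structure already isolated in Proposition~\ref{prop:normequivalence}. Formally, because $i\Hil\partial_x^3$ acts as $-\abs{D}^{3}$, one integration by parts gives
\begin{align*}
\int v\,\mathcal{L}v\,dx \;=\; -\norm[\dot{H}^{3/2}]{v}^{2} \;-\; \frac{1}{6a}\int (xw)_{x}\,v^{2}\,dx,
\end{align*}
and setting this equal to $\int Fv\,dx$, using Proposition~\ref{prop:normequivalence} to bound the second term from below by $c\norm[L^{2}_{w}]{v}^{2}$, and combining Cauchy--Schwarz with Young's inequality immediately yields the claim.

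To make this rigorous, I apply the scheme to the mollified, cut-off equation \eqref{eq:lineardelta}, which holds pointwise, paired with $\chi_{\delta}J_{\delta}v$. Since $\chi_{\delta}J_{\delta}v$ is smooth and compactly supported, every integration by parts is justified and the left-hand side exactly reproduces the formal expression above with $\chi_{\delta}J_{\delta}v$ in place of $v$. The right-hand side is
\begin{align*}
\int (\chi_{\delta}J_{\delta}v)\, F_{\delta}\,dx \;+\; \int (\chi_{\delta}J_{\delta}v)\, w\tfrac{x}{3a}(\partial_{x}\chi_{\delta})\,J_{\delta}v\,dx.
\end{align*}
Cauchy--Schwarz bounds the first integral by $\norm[L^{2}_{w}]{\chi_{\delta}J_{\delta}v}\,\norm[L^{2}]{w^{-1/2}F_{\delta}}$, which by Proposition~\ref{prop:Fdeltaconvergence} is controlled by $\norm[L^{2}]{w^{-1/2}F}$ up to an $O(\delta^{1/4})$ error depending on $\norm[L^{2}_{w}]{v}+\norm[\dot{H}^{3/2}]{v}$. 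The second integral is bounded by $\frac{1}{3a}\norm[L^{2}_{w}]{\chi_{\delta}J_{\delta}v}\,\norm[L^{2}_{w}]{x(\partial_{x}\chi_{\delta})J_{\delta}v}$, and the second factor tends to zero by Lemma~\ref{lem:eq41term}. A Young inequality then gives
\begin{align*}
\norm[\dot{H}^{3/2}]{\chi_{\delta}J_{\delta}v}^{2} + \norm[L^{2}_{w}]{\chi_{\delta}J_{\delta}v}^{2} \;\lesssim\; \norm[L^{2}]{w^{-1/2}F}^{2} + o(1) \qquad \text{as } \delta\to 0.
\end{align*}

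Passing to the limit, I use Fatou's lemma on the $L^{2}_{w}$ norm (since $\chi_{\delta}J_{\delta}v\to v$ pointwise a.e.) and weak lower semicontinuity on the $\dot{H}^{3/2}$ norm; the latter applies because Proposition~\ref{prop:chideltaJdeltanorm} supplies uniform $\dot{H}^{3/2}$ boundedness and distributional convergence identifies the weak limit as $v$. This produces $\norm[\dot{H}^{3/2}]{v}^{2}+\norm[L^{2}_{w}]{v}^{2}\lesssim\norm[L^{2}]{w^{-1/2}F}^{2}$, and uniqueness of the solution in $L^{2}_{w}\cap\dot{H}^{3/2}$ then follows at once from linearity: the difference of two solutions with the same datum satisfies the equation with $F=0$ and must therefore vanish.

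The chief technical obstacle is the extraneous cut-off term $\int (\chi_{\delta}J_{\delta}v)\,w\tfrac{x}{3a}(\partial_{x}\chi_{\delta})J_{\delta}v\,dx$, which has no counterpart in the formal calculation and is only $O(1)$ \emph{a priori}, since $\abs{x\partial_{x}\chi_{\delta}}\lesssim 1$ uniformly in $\delta$. Its vanishing in the limit is precisely what Lemma~\ref{lem:eq41term} provides, via a Hardy--Littlewood maximal majorant together with dominated convergence. Once this piece is absorbed, the energy inequality closes cleanly.
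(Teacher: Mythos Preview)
Your proof is correct and follows the same core strategy as the paper: pair the mollified, cut-off equation \eqref{eq:lineardelta} with $\chi_{\delta}J_{\delta}v$, compute the left-hand side as $-\norm[\dot{H}^{3/2}]{\chi_{\delta}J_{\delta}v}^{2}-\frac{1}{6a}\int (xw)_{x}(\chi_{\delta}J_{\delta}v)^{2}\,dx$, invoke Proposition~\ref{prop:normequivalence} for coercivity, and control the right-hand side via Proposition~\ref{prop:Fdeltaconvergence} and Lemma~\ref{lem:eq41term}.

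The one genuine difference is in how you pass to the limit $\delta\to 0$. The paper first invokes Propositions~\ref{prop:gainofdecay} and~\ref{prop:gainofregularity} to upgrade $v$ to $\dot{H}^{1}$ (in fact $\dot{H}^{3}$, with $xv_{x}\in L^{2}_{w}$), which then unlocks the \emph{strong} convergence statement in Proposition~\ref{prop:chideltaJdeltanorm} and makes the limit immediate. You instead pass to the limit by Fatou on the $L^{2}_{w}$ side and weak lower semicontinuity on the $\dot{H}^{3/2}$ side, using only the uniform bound from Proposition~\ref{prop:chideltaJdeltanorm} together with distributional convergence to identify the weak limit. Your route is slightly more self-contained, since it does not appeal to the gain-of-decay and gain-of-regularity results; the paper's route, on the other hand, dispatches the limit in one line once those earlier propositions are in hand.
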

\begin{proof}
By Proposition \ref{prop:gainofdecay}, \cref{cor:decayinterpolation} and Proposition \ref{prop:gainofregularity}, we have that $v\in L^{2}_{w}(\mathbb{R})\cap\dot{H}^{1}(\mathbb{R})\cap\dot{H}^{3}(\mathbb{R})$ and $xv_{x}\in L^{2}_{w}(\mathbb{R})$.

Multiply both sides of \eqref{eq:lineardelta} by $\chi_{\delta}J_{\delta}v$ and integrate to obtain that
\begin{align*}
\int_{\mathbb{R}}\chi_{\delta}J_{\delta}v\mathcal{L}\brac{\chi_{\delta}J_{\delta}v} \diff x &= \int_{\mathbb{R}}\chi_{\delta}J_{\delta}vF_{\delta} \diff x +\int_{\mathbb{R}}\chi_{\delta}J_{\delta}v w\frac{x}{3a}\brac{\partial_{x}\chi_{\delta}}J_{\delta}v \diff x.
\end{align*}
We have that
\begin{align*}
\int_{\mathbb{R}}\chi_{\delta}J_{\delta}v\mathcal{L}\brac{\chi_{\delta}J_{\delta}v} \diff x &= -\norm[\dot{H}^{\frac{3}{2}}]{\chi_{\delta}J_{\delta}v}^{2} - \frac{1}{6a}\int_{\mathbb{R}} \brac{xw}_{x} \brac{\chi_{\delta}J_{\delta}v}^{2} \diff x
\end{align*}
By Proposition \ref{prop:weightproperties}, we obtain that
\begin{align*}
\norm[\dot{H}^{\frac{3}{2}}]{\chi_{\delta}J_{\delta}v}^{2} + \norm[L^{2}_{w}]{\chi_{\delta}J_{\delta}v}^{2} &\lesssim \norm[L^{2}_{w}]{\chi_{\delta}J_{\delta}v}\brac{\norm[L^{2}]{w^{-\frac{1}{2}}F_{\delta}} +  \norm[L^{2}_{w}]{x \brac{\partial_{x}\chi_{\delta}}J_{\delta}v}}.
\end{align*}
By Young's inequality and Proposition \ref{prop:Fdeltaconvergence}, we get
\begin{align*}
\norm[\dot{H}^{\frac{3}{2}}]{\chi_{\delta}J_{\delta}v} + \norm[L^{2}_{w}]{\chi_{\delta}J_{\delta}v} &\lesssim \delta^{\frac{1}{4}}\brac{\norm[L^{2}_{w}]{v} + \norm[\dot{H}^{\frac{3}{2}}]{v}} + \norm[L^{2}]{F} + \norm[L^{2}]{w^{-\frac{1}{2}}F} \\
& \quad +  \norm[L^{2}_{w}]{x \brac{\partial_{x}\chi_{\delta}}J_{\delta}v}.
\end{align*}
Letting $\delta \to 0$ and using Proposition \ref{prop:chideltaJdeltanorm} and Lemma \ref{lem:eq41term}, we conclude the proof.
\end{proof}

\subsection{Further Properties of the Linear Solution}\label{subsec:furtherproperties}

We will now prove some further properties of the linear solution $v\in L^{2}_{w}(\mathbb{R})\cap\dot{H}^{1}(\mathbb{R})\cap \dot{H}^{3}(\mathbb{R})$ using that $xv_x \in L^{2}_{w}(\mathbb{R})$.

\begin{prop}\label{prop:furtherproperties}
Let $\ep_{1} > 0$ be given by \cref{prop:weightproperties} and let $0 \leq \abs{\ep} \leq \ep_1$. Let $w$ be defined by \eqref{def:w} and let $v\in L^{2}_{w}(\mathbb{R})\cap \dot{H}^{1}(\mathbb{R}) \cap \dot{H}^{3}(\mathbb{R})$ and $xv_{x}\in L^{2}_{w}(\mathbb{R})$. Then, we have the following additional regularity and decay properties:
\begin{enumerate}[leftmargin =*, align = left]
\item We have $v, v_{x}, v_{xx} \in L^{\infty}(\mathbb{R})$ and 
\begin{align*}
& \norm[L^2_w]{v} + \norm[L^{\infty}]{(1+x^{2})^{\frac{1}{8}}v} +  \norm[L^2_{w}]{ (1+x^2)^\half v_x}  + \norm[L^{\infty}]{\brac{1+x^{2}}^{\frac{1}{8}}v_{x}}   \\
&  + \norm[L^{2}]{(1+x^{2})^{\frac{1}{16}}v_{xx}} + \norm[L^{\infty}]{(1+x^{2})^{\frac{1}{32}}v_{xx}} + \norm[L^{2}]{v_{xxx}} \\
&  \lesssim \norm[X]{v}
\end{align*}
where $\norm[X]{v}$ is defined in \cref{def:Xnorm}. 
\item We have that $\norm[X]{\Hil v} \lesssim \norm[X]{v}$ and hence the same estimate as above holds with $v$ replaced by $\Hil v$ in the left hand side of the above estimate.
\end{enumerate}
\end{prop}
\begin{proof}
We prove the statements individually.
\begin{enumerate}
\item We have $w^{\frac{1}{2}}v\in L^{2}(\mathbb{R})$ and by  \cref{cor:decayinterpolation}, we have $(1+x^{2})^{\frac{1}{2}}w^{\frac{1}{2}}v_{x}\in L^{2}(\mathbb{R})$.  \cref{lem:infinitydecay} now implies that 
\begin{align*}
\norm[L^{\infty}]{\brac{1+x^{2}}^{\frac{1}{4}}w^{\frac{1}{2}}v} &\lesssim \norm[L^{2}_{w}]{v}+\norm[L^{2}_{w}]{(1+x^{2})^{\frac{1}{2}}v_{x}}.
\end{align*}
and $\brac{1+x^{2}}^{\frac{1}{4}}w^{\frac{1}{2}}v \rightarrow 0$ as $\abs{x}\rightarrow \infty$. Thus by \cref{prop:weightproperties} we obtain that
\begin{align*}
\norm[L^{\infty}]{\brac{1+x^{2}}^{\frac{1}{8}}v} &\lesssim \norm[X]{v}.
\end{align*}
Similarly, since $v_{xx}\in L^{2}(\mathbb{R})$, Lemma \ref{lem:infinitydecay} implies that
\begin{align*}
\norm[L^{\infty}]{\brac{1+x^{2}}^{\frac{1}{4}}w^{\frac{1}{4}}v_{x}} &\lesssim \norm[L^{2}_{w}]{(1+x^{2})^{\frac{1}{2}}v_{x}} + \norm[L^{2}]{v_{xx}} \lesssim \norm[X]{v}
\end{align*}
and $\brac{1+x^{2}}^{\frac{1}{4}}w^{\frac{1}{4}}v_{x} \rightarrow 0$ as $\abs{x}\rightarrow \infty$. Hence
\begin{align*}
\norm[L^{\infty}]{\brac{1+x^{2}}^{\frac{1}{8}}v_{x}} &\lesssim  \norm[X]{v}
\end{align*}
Next, 
\begin{align*}
\norm[L^{\infty}]{v_{xx}} &\lesssim \norm[H^{1}]{v_{xx}} \lesssim \norm[X]{v}.
\end{align*}
Using integration by parts and the fact that $\brac{1+x^{2}}^{\frac{1}{4}}w^{\frac{1}{4}}v_{x} \rightarrow 0$ as $\abs{x}\rightarrow \infty$, we obtain
\begin{align*}
\norm[L^{2}]{(1+x^{2})^{\frac{1}{8}}w^{\frac{1}{8}}v_{xx}}^{2} 
&= \int (1+x^{2})^{\frac{1}{4}}w^{\frac{1}{4}} v_{xx}^{2} \diff x \\
&= -\int \partial_{x}\brac{(1+x^{2})^{\frac{1}{4}}w^{\frac{1}{4}}} v_{x}v_{xx} \diff x - \int (1+x^{2})^{\frac{1}{4}}w^{\frac{1}{4}} v_{x} v_{xxx} \diff x.
\end{align*}
The first term satisfies
\begin{align*}
\abs{\int \partial_{x}\brac{(1+x^{2})^{\frac{1}{4}}w^{\frac{1}{4}}} v_{x}v_{xx} \diff x}
&\lesssim \norm[\dot{H}^{1}]{v}\norm[\dot{H}^{2}]{v} \lesssim \norm[X]{v}^{2}.
\end{align*}
For the second term, by \cref{cor:decayinterpolation}, we have that
\begin{align*}
&\abs{ \int (1+x^{2})^{\frac{1}{4}}w^{\frac{1}{4}} v_{x} v_{xxx} \diff x}\\
&\lesssim  \norm[L^{\infty}]{w^{-\frac{1}{4}}(1+x^{2})^{-\frac{1}{4}}}\norm[L^{2}_{w}]{(1+x^{2})^{\frac{1}{2}}v_{x}}\norm[\dot{H}^{3}]{v}\\
&\lesssim \norm[X]{v}^{2}.
\end{align*}
Thus, we obtain that
\begin{align*}
\norm[L^{2}]{(1+x^{2})^{\frac{1}{16}}v_{xx}} &\lesssim 	\norm[L^{2}]{(1+x^{2})^{\frac{1}{8}}w^{\frac{1}{8}}v_{xx}}  \lesssim \norm[X]{v}.
\end{align*}
Finally Lemma \ref{lem:infinitydecay} implies that
\begin{align*}
\norm[L^\infty]{(1+x^2)^{\frac{1}{32}} v_{xx}} \lesssim \norm[L^{2}]{(1+x^{2})^{\frac{1}{16}}v_{xx}} + \norm[L^2]{v_{xxx}} \lesssim \norm[X]{v}
\end{align*}
		
\item For $\Hil v$, we have $\norm[\dot{H}^{s}]{\Hil v} = \norm[\dot{H}^{s}]{v} $ for $s \geq 0$ and using \cref{prop:JdeltaHilnorm} we see that $\norm[L^{2}_{w}]{\Hil v}  \lesssim \norm[L^{2}_{w}]{v}$ and
\begin{align*}
\norm[L^{2}_{w}]{x \Hil v_{x}} =\norm[L^{2}_{w}]{\Hil \brac{x v_{x}}}\lesssim \norm[L^{2}_{w}]{x v_{x}}.
\end{align*}
since $\sqbrac{\Hil,x} v_{x} = 0$ due to the decay of $v$ at $\pm \infty$. Hence, the properties proved for $v$ apply to $\Hil v$.
\end{enumerate}
\end{proof}

\begin{cor}\label{cor:furtherproperties}
Let $\ep_{1} > 0$ be given by \cref{prop:weightproperties} and let $0 \leq \abs{\ep} \leq \ep_1$. 
Let $u\in L^{2}_{w}(\mathbb{R})\cap \dot{H}^{1}(\mathbb{R}) \cap \dot{H}^{3}(\mathbb{R})$ and $xu_{x}\in L^{2}_{w}(\mathbb{R})$ and let $g = \ep G + u$ and $f =  i\Hil g$. Then
\begin{enumerate}
\item We have
\begin{align*}
&  \norm[L^2_{w}]{ (1+x^2)^\half g_x}  + \norm[L^{\infty}]{\brac{1+x^{2}}^{\frac{1}{8}}g_{x}}   + \norm[L^{2}]{(1+x^{2})^{\frac{1}{16}}g_{xx}} \\
&  + \norm[L^{\infty}]{(1+x^{2})^{\frac{1}{32}}g_{xx}} + \norm[L^{2}]{g_{xxx}} \\
&  \lesssim \abs{\ep} + \norm[X]{u}.
\end{align*}

\item We have
\begin{align*}
&  \norm[L^2]{ (1+x^2)^{\frac{1}{8}} f_x}  + \norm[L^{\infty}]{\brac{1+x^{2}}^{\frac{1}{8}} f_{x}}   + \norm[L^{2}]{(1+x^{2})^{\frac{1}{16}} f_{xx}} \\
&  + \norm[L^{\infty}]{(1+x^{2})^{\frac{1}{32}} f_{xx}} + \norm[L^{2}]{f_{xxx}} \\
&  \lesssim \abs{\ep} + \norm[X]{u}.
\end{align*}
\end{enumerate}
\end{cor}
\begin{proof}
This follows directly from \cref{prop:furtherproperties} and \cref{prop:Gproperties}. 
\end{proof}

\section{The Full Nonlinear Equation}\label{sec:fullequation}

In this section, we will prove the main result \cref{thm:mainselfsimilarresult} by first proving the existence and uniqueness of a smooth solution to the equation \cref{eq:linearequationorig} with $F$ is given by the nonlinear expression \eqref{F}.  From \eqref{F} we see that we can write $F$ as
\begin{align}\label{eq:Fnonlineardef}
F = \ep S + N
\end{align}
where
\begin{align}\label{def:S}
S = - i\Hil G_{xxx} -  w\frac{x}{3a}G_x
\end{align}
and
\begin{align}\label{def:N}
\begin{aligned}
N & =   ie^f \sqbrac{e^{-f}, \Hil}g_{xxx}  - e^f i\Hil\cbrac{e^{-f}f_x^2 g_x - e^{-f}f_{xx}g_x -2e^{-f}f_xg_{xx}}  \\
& \quad - w(e^{3i\Hil u} - 1)\frac{x}{3a}g_x  + ie^f f_{x} \Hil\frac{d}{dx} \cbrac{e^{-f}g_{x}} + e^{3f}g_{x}\Hil\cbrac{e^{-2f}\Hil\frac{d}{dx}\cbrac{ e^{-f}g_{x}}} 
\end{aligned}
\end{align}

We have the following estimate for the expression \eqref{def:N} in terms of the norm given by \cref{def:Xnorm}.
\begin{prop}\label{prop:contraction}
Let $\ep_1>0$ be as given by \cref{prop:weightproperties} and let $0 \leq \abs{\ep} \leq \ep_1$. Let $u, \util \in L^2_w(\Rsp)$ be such that $\norm[X]{u}, \norm[X]{\util} \leq 1$. Define $g = \ep G + u$ and $f =  i\Hil g$ and let $N$ be given by \cref{def:N}. Further, define $\gtil, \ftil$ and $\Ntil$ analogously with $u$ replaced by $\util$. 

Then, for $b= 0$ or $b=-\frac{1}{2}$, there exists a continuous strictly increasing function $C_{b} : [0,\infty)\rightarrow [0,\infty)$ with $C_{b}(0) = 0$ such that
\begin{align*}
\norm[L^{2}]{w^{b}(N - \Ntil)} &\leq C_{b}(\abs{\ep} + \norm[X]{u}+\norm[X]{\util}) \norm[X]{u- \util}
\end{align*}
\end{prop}
\begin{proof}
Observe that
\begin{align*}
e^f = e^{i\ep\Hil G}e^{i\Hil u} = w^{\frac{1}{3}}e^{i\Hil u}
\end{align*}
From \cref{prop:furtherproperties} we see that $\norm[L^\infty]{{i\Hil u}} + \norm[L^\infty]{{i\Hil \util}} \lesssim 1$ and hence
\begin{align*}
\norm[L^2_w]{e^{i\Hil u} - e^{i\Hil \util}} + \norm[L^\infty]{(1+x^2)^{\frac{1}{8}}(e^{i\Hil u} - e^{i\Hil \util})} \lesssim \norm[X]{u - \util}
\end{align*}
Let $b = 0$ or $b = -1/2$. We will use the notation $C_{b}$ to represent any continuous strictly increasing function $C_{b} : [0,\infty)\rightarrow [0,\infty)$ with $C_{b}(0) = 0$. Now
\begin{align*}
w^b e^f \sqbrac{e^{-f}, \Hil}g_{xxx} & = [w^b, \Hil]g_{xxx} - [w^be^f, \Hil](e^{-f}g_{xxx}) \\
& = [w^b, \Hil]g_{xxx} - [w^be^f, \Hil](e^{-f}g_{xx})_x -[w^be^f,\Hil](e^{-f} f_x g_{xx})
\end{align*}
From \cref{prop:standardcommutator} we get
\begin{align*}
\norm[L^2]{[w^b, \Hil]g_{xxx}}  & \lesssim \norm[L^\infty]{(w^b)_x}\norm[L^2]{g_{xx}}   \\
\norm[L^2]{[w^be^f, \Hil](e^{-f}g_{xx})_x}  & \lesssim \norm[L^\infty]{(w^be^f)_x}\norm[L^2]{e^{-f}g_{xx}}  \\
\norm[L^2]{[w^be^f,\Hil](e^{-f} f_x g_{xx})} & \lesssim  \norm[L^2]{(w^b e^f)_x} \norm[L^2]{e^{-f}f_x} \norm[L^2]{g_{xx}}
\end{align*}
Now subtracting terms individually and using \cref{prop:furtherproperties}, \cref{cor:furtherproperties} and \cref{prop:standardcommutator} we therefore get
\begin{align*}
\norm[L^2]{w^b e^f \sqbrac{e^{-f}, \Hil}g_{xxx} - w^b e^{\ftil} \sqbrac{e^{-\ftil}, \Hil}\gtil_{xxx}} \leq C_{b}(\abs{\ep} + \norm[X]{u}+\norm[X]{\util}) \norm[X]{u- \util}
\end{align*}
The rest of the terms of $N$ are also controlled easily by again subtracting terms individually and using \cref{prop:furtherproperties}, \cref{cor:furtherproperties} and \cref{prop:JdeltaHilnorm}. 
\end{proof}

We can now prove the existence and uniqueness of  solutions to \eqref{eq:linearequationorig}. 

\begin{thm}\label{thm:mainexistencesmalldata}
There exists $\ep_0 > 0$ such that for all $\ep \in [-\ep_0, \ep_0]$, there exists a $u \in L^{2}_{w}(\mathbb{R})\cap \dot{H}^{1}(\mathbb{R})\cap \dot{H}^{3}(\mathbb{R})$ with $xu_{x} \in L^{2}_{w}(\mathbb{R})$ and $\norm[X]{u} \leq \sqrt{\ep_0}$, which solves \eqref{eq:linearequationorig}. Moreover if $u_1, u_2$ are two such solutions to \eqref{eq:linearequationorig} with $\norm[X]{u_1} \leq \sqrt{\ep_0}$ and $\norm[X]{u_2} \leq \sqrt{\ep_0}$, then $u_1 = u_2$. 
\end{thm}
\begin{proof}
Let $0 < \ep_1 \leq 1/100$ be as given by \cref{prop:weightproperties}. In \cref{thm:linear} let $\widetilde{C}_1>0$ be the constant in the inequality \cref{ineq:linearXestimate} i.e. if $v$ solves $\Lcal v = F$ then we get
\begin{align}\label{def:C1til}
\norm[X]{v} &\leq \widetilde{C}_1 \brac{\norm[L^{2}]{F} + \norm[L^{2}]{w^{-\frac{1}{2}}F}}
\end{align}
Recall that $F = \ep S + N$ where $S$ and $N$ are given by \cref{def:S} and \cref{def:N}. By Proposition \ref{prop:Gproperties}, we have that $S, w^{-\frac{1}{2}}S\in L^{2}(\mathbb{R})$. Let $\widetilde{C}_2>0$ be such that
\begin{align}\label{def:C2til}
\norm[L^2]{S} + \norm[L^2]{w^{-\frac{1}{2}}S} \leq \widetilde{C}_2
\end{align}

As $C_b(0) = 0$ for both $b = 0, - \half$ and using the fact that $C_b$ are continuous strictly increasing functions, we see that there exists $0 < \ep_0 \leq \ep_1$ such that 
\begin{align}\label{ep0C0Cminushalfcondition}
\widetilde{C}_1\cbrac{C_0(\ep_0 + 2\sqrt{\ep_0}) + C_{-\half}(\ep_0 + 2\sqrt{\ep_0})}  \leq \frac{1}{4}
\end{align}
and
\begin{align}\label{conditionep0cic2}
\sqrt{\ep_0}\widetilde{C_1}\widetilde{C_2} \leq \frac{1}{4}
\end{align}
We use this $\ep_0>0$ and let $0 \leq \abs{\ep} \leq \ep_0$.  

We let $N_0 = 0$ and $F_0 = \ep S$. From  \cref{prop:Gproperties} and  \cref{prop:weightproperties} we see that $F_0, w^{-\half} F_0 \in L^2(\Rsp)$. Hence by Theorem \ref{thm:linear}, we can define $u_{1}\in L^{2}_{w}(\mathbb{R}) \cap \dot{H}^{1}(\mathbb{R})\cap \dot{H}^{3}(\mathbb{R})$ with $x\partial_{x}u_{1} \in L^{2}_{w}(\mathbb{R})$ as the unique solution to
\begin{align*}
	\mathcal{L}u_{1} = F_0
\end{align*}
For $n \geq 1$ we define $g_{n} = u_{n} + \ep G$ and $f_{n} = i\Hil g_{n}$. We further define $F_n$ and $N_n$ by \cref{eq:Fnonlineardef} and \cref{def:N} respectively, where we use $u_n, g_n$ and $f_n$ in the formula of $N$ in \cref{def:N}. From \cref{prop:contraction} we see that if $u_{n}\in L^{2}_{w}(\mathbb{R}) \cap \dot{H}^{1}(\mathbb{R})\cap \dot{H}^{3}(\mathbb{R})$ with $x\partial_{x}u_{n} \in L^{2}_{w}(\mathbb{R})$, then we have $F_n, w^{-\half} F_n \in L^2(\Rsp)$. Hence using \cref{thm:linear}, we define $u_{n+1} \in L^{2}_{w}(\mathbb{R}) \cap \dot{H}^{1}(\mathbb{R})\cap \dot{H}^{3}(\mathbb{R})$ with $x\partial_{x}u_{n+1} \in L^{2}_{w}(\mathbb{R})$ as the unique solution to the equation
\begin{align*}
\mathcal{L}u_{n+1} = F_n
\end{align*}



As $\Lcal u_1 = F_0 =  \ep S$ we see from \cref{def:C1til}, \cref{def:C2til} and \cref{conditionep0cic2} that
\begin{align*}
\norm[X]{u_1} \leq \abs{\ep}\widetilde{C}_1\widetilde{C}_2 \leq \frac{\sqrt{\ep_0}}{4}
\end{align*}
Also as $\Lcal u_2 = F_1$ we see that
\begin{align*}
\Lcal(u_2 - u_1) = N_1 - N_0 = N_1
\end{align*}
and so from \cref{thm:linear}, \cref{prop:contraction}, \cref{def:C1til} and \cref{ep0C0Cminushalfcondition} we see that
\begin{align*}
\norm[X]{u_2 - u_1} & \leq \widetilde{C}_1\brac{\norm[L^2]{N_1} + \norm[L^2]{w^{-\half}N_1}} \\
& \leq \widetilde{C}_1\brac{C_0(\abs{\ep} + \norm[X]{u_{1}}) + C_{-\half}(\abs{\ep} + \norm[X]{u_{1}})}\norm[X]{u_1}  \\
& \leq \frac{\norm[X]{u_1}}{4} \\
& \leq \frac{\sqrt{\ep_0}}{16}
\end{align*}
Now observe that for $n \geq 2$ we have
\begin{align*}
\mathcal{L}\brac{u_{n+1}-u_{n}} &= N_{n}- N_{n-1}.
\end{align*}
Hence by \cref{thm:linear}, \cref{prop:contraction} and \cref{def:C1til} we get
\begin{align*}
& \norm[X]{u_{n+1}-u_{n}} \\
& \leq \widetilde{C}_1 \brac{ \norm[L^{2}]{N_{n}- N_{n-1}} + \norm[L^{2}]{w^{-\frac{1}{2}}\brac{N_{n}- N_{n-1}}}} \\
& \leq \widetilde{C}_1\brac{C_0(\ep + \norm[X]{u_{n}} + \norm[X]{u_{n-1}}) + C_{-\half}(\ep + \norm[X]{u_{n}} + \norm[X]{u_{n-1}})}\norm[X]{u_n - u_{n-1}} 
\end{align*}
Hence using \cref{ep0C0Cminushalfcondition} and by induction we see that for all $n \geq 1$ we have $\norm[X]{u_n} \leq \sqrt{\ep_0}$ and for all $n \geq 2$ we have
\begin{align}\label{ineq:contraction}
\norm[X]{u_{n+1}-u_{n}} &\leq \frac{1}{4} \norm[X]{u_{n} - u_{n-1}}.
\end{align}
Thus, $u_{n}$ converges to some $u \in L^{2}_{w}(\mathbb{R})\cap \dot{H}^{1}(\mathbb{R})\cap \dot{H}^{3}(\mathbb{R})$ with $xu_{x}\in L^{2}_{w}(\mathbb{R})$ in norm i.e.
\begin{align*}
\lim_{n\rightarrow \infty}\norm[X]{u_{n} - u} = 0.
\end{align*}
Moreover we have $\norm[X]{u} \leq \sqrt{\ep_0}$. 
Thus, by Proposition \ref{prop:contraction}, it can be seen that $u$ satisfies \eqref{eq:linearequationorig} in a distributional sense. Since  $u \in L^{2}_{w}(\mathbb{R})\cap \dot{H}^{1}(\mathbb{R})\cap \dot{H}^{3}(\mathbb{R})$ and $xu_{x}\in L^{2}_{w}(\mathbb{R})$, it follows that $u$ satisfies  \eqref{eq:linearequationorig} pointwise a.e. The uniqueness statement follows immediately from \cref{prop:contraction}, \cref{def:C1til} and \cref{ep0C0Cminushalfcondition}. 
\end{proof}


Now consider the solution $u\in L^{2}_{w}(\mathbb{R})\cap \dot{H}^{1}(\mathbb{R})\cap \dot{H}^{3}(\mathbb{R})$ with $xu_{x}\in L^{2}_{w}(\mathbb{R})$ to \cref{eq:linearequationorig}  with $F$ given by \eqref{F}, as given by \cref{thm:mainexistencesmalldata}. Differentiating both sides of \cref{eq:linearequationorig} we get
\begin{align*}
\Lcal \util = \widetilde{F}
\end{align*}
where $\util = u_x \in H^2(\Rsp)$ and  
\begin{align*}
\widetilde{F} = -\brac{w\frac{x}{3a}}_{x}u_{x} + \partial_{x}F
\end{align*}
It is easy to see from the expression of \cref{F} and the proof of \cref{prop:contraction} that $\widetilde{F}, w^{-\half}\widetilde{F} \in L^2(\Rsp)$. Furthermore as $x\widetilde{u} \in L^2_w(\Rsp)$ we can therefore apply \cref{thm:linear} to conclude that $\norm[X]{\widetilde{u}} < \infty$. Hence  $u \in \dot{H}^{4}(\mathbb{R})$ and $xu_{xx}\in L^{2}_{w}(\mathbb{R})$. We can repeat this argument to conclude that $u \in \dot{H}^{k}(\mathbb{R})$ and $x\partial_{x}^{k}u \in L^{2}_{w}(\mathbb{R})$ for any $k\geq 1$. Therefore, we finally get

\begin{thm}\label{thm:fullregularity}
Let $u\in L^{2}_{w}(\mathbb{R})\cap \dot{H}^{1}(\mathbb{R}) \cap \dot{H}^{3}(\mathbb{R})$ with $xu_{x}\in L^{2}_{w}(\mathbb{R})$ be the unique solution to \eqref{eq:linearequationorig} with $\norm[X]{u} \leq \sqrt{\ep_0}$ as given by \cref{thm:mainexistencesmalldata}. Then $u$ is smooth and in particular $\norm[X]{\partial_x^k u} < \infty$ and hence $u\in \dot{H}^{k}(\mathbb{R})$ and $x\partial_{x}^{k}u \in L^{2}_{w}(\mathbb{R})$ for all $k\geq 1$.
\end{thm}

To complete the proof of \cref{thm:mainselfsimilarresult}, it remains to demonstrate that the solution $u$ of Theorem \ref{thm:fullregularity} 
leads to a solution of the equation \eqref{eq:dtZbar}. As we are dealing with self similar solutions, it is enough to demonstrate the existence of an $\eta$ which solves \eqref{eq:etaselfsimilar}. 

Let $u$ be as given by Theorem \ref{thm:fullregularity} and let  $g = u + \ep G$ and $f = i\Hil g$. Define 
\begin{align*}
\eta(x) = L + \int_0^x e^{(f+ig)(s)} \diff s
\end{align*}
where $L$ is a constant which we will specify later on. Hence $\eta_x = e^{f+ig}$. Observe that as $u$ is smooth, we have that $f, g$ are both smooth and hence $\eta$ is also smooth. Also the asymptotic properties required in \cref{thm:mainselfsimilarresult} now follow directly the properties of $f$ and $g$. 

As \cref{eq:selfsimilar} is equivalent to \cref{eq:linearequationorig}, we see that $f$ and $g$ satisfy \eqref{eq:selfsimilar} and therefore, \eqref{eq:xgxselfsimilar}. Now define the function $U$ as
\begin{align*}
U &=  \frac{1}{3}-\frac{1}{3a} - \frac{x}{3a}f_{x} + i\frac{x}{3a}g_{x} \\
& \quad +\frac{i}{\bar{\eta}_{x}}\frac{d}{dx}\brac{\bar{\eta}_{x}(I-\Hil)\cbrac{\Real\left(\frac{1}{\abs{\eta_{x}}^{2}}\frac{d}{dx}(\Id + \Hil)\cbrac{ \frac{1}{\eta_{x}}\frac{d}{dx} \left(\frac{\eta_{x}}{\abs{\eta_{x}}}\right)}\right)}}
\end{align*}
Observe that as \eqref{eq:xgxselfsimilar} holds, this implies that $\Imag U = 0$. Also observe that proving that $\eta_{x}$ satisfies \eqref{eq:18'} is equivalent to showing that $U = 0$ identically.

As $\norm[X]{\partial_x^k u} < \infty$ for all $k \geq 0$, we see from \cref{cor:furtherproperties} and \cref{prop:Gproperties} that for all $k \geq 1$ we have $\brac{1+x^{2}}^{\frac{1}{8}}\partial_{x}^{k} g(x) \in L^2(\Rsp)$, $\brac{1+x^{2}}^{\frac{1}{8}}\partial_{x}^{k}f(x) \in L^2(\Rsp)$, $\brac{1+x^{2}}^{\frac{1}{8}}\partial_{x}^{k}f(x) \in L^\infty(\Rsp)$ and $\brac{1+x^{2}}^{\frac{1}{8}}\partial_{x}^{k}g(x) \in L^\infty(\Rsp)$. Therefore by \cref{lem:infinitydecay} we see that the right hand side of \eqref{eq:xgxselfsimilar} decays faster than $\brac{1+x^{2}}^{-\frac{1}{16}}$. Hence, the left hand side term $xg_{x}$ has the same decay rate and so $g_{x}\in L^{1}(\mathbb{R})$. Applying the Hilbert transform to both sides of \cref{eq:xgxselfsimilar} and using the same logic, we also see that $i\Hil\brac{xg_{x}}$ is well defined and $\brac{1+x^{2}}^{\frac{1}{16}}i\Hil\brac{xg_{x}}\rightarrow 0$ as $x\rightarrow \pm \infty$. Now observe that
\begin{align*}
i\Hil\brac{xg_{x}} &= xf_{x} - \sqbrac{x,i\Hil}g_{x}
\end{align*}
where
\begin{align*}
\sqbrac{x,i\Hil}g_{x} &=  \frac{1}{\pi} \text{p.v.} \int_{\Rsp} g_{y}(y) \diff y = \frac{2\ep}{\pi}
\end{align*}
since $g\rightarrow \pm \ep$ and $x\rightarrow \pm \infty$. Thus, we have shown that $xf_{x}\rightarrow \frac{2\ep}{\pi}$ as $x \rightarrow \pm \infty$. Hence by \eqref{eq:a}, we see that
\begin{align*}
 \frac{1}{3}-\frac{1}{3a} - \frac{x}{3a}f_{x} + i\frac{x}{3a}g_{x} &=   \frac{1}{3a}\brac{\frac{2\ep}{\pi}-xf_{x}} + i\frac{x}{3a}g_{x}\rightarrow 0 \quad \text{as} \quad x\rightarrow \pm\infty.
\end{align*}
Using the decay of $\partial_{x}^{k}g$ and $\partial_{x}^{k}f$, the rest of $U$ is bounded and decays to zero as $x\rightarrow \pm \infty$. Thus, $U$ is bounded and decays to zero at infinity. From the above analysis, we also see that $\Hil U = -U$ and so $\Real(U) = -i\Hil \Imag(U)$. As $\Imag(U) = 0$, we have that $\Real(U) = 0$ and so $U = 0$ identically.  Thus, we have that $\eta_{x}$ satisfies \eqref{eq:18'}.

Finally if we define a function $S$ as  
\begin{align*}
S = \frac{x}{a}\bar{\eta}_{x}-3\bar{\eta}_{x}i(I-\Hil)\cbrac{\Real\left(\frac{1}{\abs{\eta_{x}}^{2}}\frac{d}{dx}(\Id + \Hil)\cbrac{ \frac{1}{\eta_{x}}\frac{d}{dx} \left(\frac{\eta_{x}}{\abs{\eta_{x}}}\right)}\right)}.
\end{align*}
then a direct computation using the equation \cref{eq:18'} yields that $S_x = \bar{\eta}_x$. Hence we can now specify $L$ in the definition of $\eta$ by directly defining $\eta = \bar{S}$. This completes the proof of \cref{thm:mainselfsimilarresult}.

\section{Appendix}

In this appendix, we state several useful propositions that are used throughout the paper.

\begin{prop}\label{prop:standardcommutator}
	Let $f,g \in \mathcal{S}(\Rsp)$ with $s,a\in \Rsp$ and $m,n \in \Zsp$. Then we have the following estimates
	\begin{enumerate}
		\item $\norm*[\big][2]{\abs{D}^s\sqbrac{f,\Hil}(\abs{D}^{a} g )} \lesssim_{s,a}  \norm*[\big][BMO]{\abs{D}^{s+a}f}\norm[2]{g}$ \quad  for $s,a \geq 0$
		
		\item $\norm*[\big][2]{\abs{D}^s\sqbrac{f,\Hil}(\abs{D}^{a} g )} \lesssim_{s,a}  \norm*[\big][2]{\abs{D}^{s+a}f}\norm[BMO]{g}$ \quad  for $s\geq 0$ and $a>0$
		
		\item $\norm*[\big][2]{\sqbrac*[\big]{f,\abs{D}^\half}g } \lesssim \norm*[\big][BMO]{\abs{D}^\half f}\norm[2]{g} \lesssim \norm*[\big][L^{2}]{ f_{x}}\norm[2]{g}$

		\item $\norm*[\big][2]{\sqbrac*[\big]{f,\abs{D}^\half}(\abs{D}^\half g) } \lesssim \norm*[\big][BMO]{\abs{D} f}\norm[2]{g}$
		
		\item $\norm[\Linfty\cap\Hhalf]{\partial_{x}^m\sqbrac{f,\Hil}\partial_{x}^n g} \lesssim_{m,n} \norm*[\big][2]{\partial_{x}^{(m+n+1)}f}\norm[2]{g}$ \quad  for $m,n \geq 0$
		
		\item $\norm[2]{\partial_{x}^m\sqbrac{f,\Hil}\partial_{x}^n g} \lesssim_{m,n} \norm*[\infty]{\partial_{x}^{(m+n)} f}\norm[2]{g}$ \quad  for $m,n \geq 0$
		
		\item $\norm[2]{\partial_{x}^m\sqbrac{f,\Hil}\partial_{x}^n g} \lesssim_{m,n} \norm*[2]{\partial_{x}^{(m+n)} f}\norm[\infty]{g}$ \quad for $m\geq 0$ and $n\geq 1$
		
		\item $\norm[2]{\sqbrac{f,\Hil}g} \lesssim \norm[2]{f_{x}}\norm[1]{g}$
	\end{enumerate}
\end{prop}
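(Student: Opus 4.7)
The plan is to treat Proposition \ref{prop:standardcommutator} as a collection of classical commutator estimates for the Hilbert transform and its fractional analogues, and to derive each of the eight items from a small set of standard tools: the explicit kernel representation
\begin{align*}
[f,\Hil]g(x) = \frac{1}{i\pi}\,\text{p.v.}\!\int_\Rsp \frac{f(x)-f(y)}{x-y}\,g(y)\,\diff y,
\end{align*}
the Coifman-Rochberg-Weiss BMO commutator theorem, and the Kato-Ponce and Kenig-Ponce-Vega fractional Leibniz rules. Versions of all eight inequalities appear in the appendices of \cite{Ag21, AgPaWu23}, so the proof I envision would largely consist of pointing to those references after explaining which tool applies to which item.

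First I would handle the kernel-based estimates (6)--(8). For (8), the identity $f(x)-f(y) = (x-y)\int_0^1 f_x(y+t(x-y))\,\diff t$ recasts the commutator kernel as $\int_0^1 f_x(y+t(x-y))\,\diff t$; Minkowski in $t$ together with the change-of-variable $\|f_x(y+t(\cdot-y))\|_{L^2_x} = t^{-1/2}\|f_x\|_{L^2}$ and the fact that $t^{-1/2}$ is integrable on $[0,1]$ yields $\|[f,\Hil]g\|_{L^2} \lesssim \|f_x\|_{L^2}\|g\|_{L^1}$. For (6) and (7), I would expand the derivatives inside the commutator via Leibniz, producing sums of terms of the form $(\partial_x^j f)\partial_x^{m+n-j}\Hil g - \Hil((\partial_x^j f)\partial_x^{m+n-j} g)$, each of which regroups into a lower-order commutator plus a pointwise product; induction on $m+n$ combined with the $L^2$-boundedness of $\Hil$ closes both estimates. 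Estimate (5) then follows from (6) with $n=0$ and the one-dimensional Sobolev embedding $H^1 \hookrightarrow L^\infty \cap \Hhalf$.

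For the fractional estimates (1)--(4) I would use Bony's paraproduct decomposition. Writing $fg = T_f g + T_g f + R(f,g)$ after cutting $f$ and $g$ into Littlewood-Paley pieces, the commutator $\abs{D}^s[f,\Hil]\abs{D}^a$ gains regularity equal to the order of the singular integral whenever the low-frequency factor in a paraproduct is $f$; this is the Coifman-Rochberg-Weiss theorem in the integer case and the Kenig-Ponce-Vega extension in the fractional case. The BMO dependence in (1) and (2) arises from pairing the Hardy-space piece of the output with the BMO factor via $H^1$-BMO duality, and (2) is then obtained as the dual of (1) after swapping the roles of $f$ and $g$. For (3) and (4), I would use the identity $\abs{D} = i\Hil\partial_x$ to rewrite each half-derivative commutator in terms of a $\Hil$ commutator before feeding it into the same paraproduct machinery; the second inequality in (3) is the one-dimensional critical Sobolev embedding $\Hhalf \hookrightarrow BMO$.

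The main subtle point, and where I expect to have to be most careful, is that $BMO$ rather than $L^\infty$ is the sharp space in (1)--(4). This matters downstream because the weight $w = e^{3i\ep \Hil G}$ and similar expressions used throughout the paper are controlled only in BMO through Proposition \ref{prop:Gproperties} and Corollary \ref{prop:weightproperties}; any $L^\infty$-based variant of (1)--(4) would fail to reach these applications. Tracking the BMO norm through the Littlewood-Paley decomposition requires some care, but is otherwise a standard paraproduct exercise, and I would ultimately defer to the detailed calculations in \cite{Ag21, AgPaWu23} and the original references of Coifman, Meyer, Kenig, Ponce, and Vega rather than reproduce them here.
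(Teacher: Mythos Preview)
Your proposal is correct and takes essentially the same approach as the paper: the paper's entire proof is the single sentence ``The above estimates are shown in the Appendix section of \cite{Ag21},'' and you likewise conclude by deferring to \cite{Ag21, AgPaWu23}. Your additional sketches of the underlying tools (kernel representation for (6)--(8), Coifman--Rochberg--Weiss and paraproducts for (1)--(4)) are accurate and helpful context, though your derivation of (5) from (6) plus Sobolev embedding does not quite line up---(6) yields $\|\partial_x^{m+n}f\|_{L^\infty}$ on the right, not the $\|\partial_x^{m+n+1}f\|_{L^2}$ required by (5)---but since you ultimately cite the reference this is harmless.
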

\begin{proof}
	The above estimates are shown in the Appendix of \cite{Ag21}.
\end{proof}
Next, let $J_{\delta}$ be defined as in \eqref{def:Jdelta}.
\begin{prop}\label{prop:appendixJdelta}
	Let $f,g \in \mathcal{S}(\Rsp)$ and let $0 < \del \leq 1$. Then we have
	\begin{enumerate}
		\item $\norm[2]{\sqbrac{\Jdel,f}g_{x}} \lesssim \del\norm[H^2]{f}\norm[2]{g_{x}}$ 
		\item $\norm[2]{\sqbrac{\Jdel,f} g_{x}} \lesssim \norm[\infty]{f_{x}}\norm[2]{g}$ 
		\item $\norm[2]{\abs{D}^\half\sqbrac{\Jdel,f}g_{x}} \lesssim \norm[\infty]{f_{x}}\norm[\Hhalf]{g}$
	\end{enumerate}
\end{prop}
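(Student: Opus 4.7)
The plan is to exploit the kernel representation
\[
[\Jdel, f]g(x) = \int_\Rsp \phi_\delta(x-y)\bigl(f(y)-f(x)\bigr)g(y)\,dy
\]
together with the first-order Taylor expansion $f(y)-f(x) = (y-x)\int_0^1 f'(x+\tau(y-x))\,d\tau$. Using the identity $(y-x)\phi_\delta(x-y) = -\delta\,\psi_\delta(x-y)$, with $\psi(w) := w\phi(w)$ rescaled in the usual way, this gives the master formula
\[
[\Jdel, f]g_x(x) = -\delta\int \psi_\delta(x-y)\Big(\int_0^1 f'\bigl(x+\tau(y-x)\bigr)d\tau\Big)g_x(y)\,dy.
\]

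For (i), I would bound the $\tau$-average by $\|f_x\|_\infty$ pointwise, then apply Minkowski's integral inequality in $\tau$ combined with Young's convolution inequality (after the change of variable $u=x-y$) to conclude $\|[\Jdel, f]g_x\|_2 \lesssim \delta\,\|\psi\|_1\,\|f_x\|_\infty\,\|g_x\|_2$; the stated bound follows from the one-dimensional Sobolev embedding $H^2(\Rsp)\hookrightarrow W^{1,\infty}(\Rsp)$. For (ii) I would avoid the factor of $\delta$ via the commutator product rule
\[
[\Jdel, f]g_x = \partial_x\bigl([\Jdel, f]g\bigr) - [\Jdel, f_x]g.
\]
The second term satisfies $\|[\Jdel, f_x]g\|_2 \leq 2\|f_x\|_\infty\|g\|_2$ directly since $\Jdel$ is an $L^2$ contraction, while differentiating the kernel yields $\partial_x[\Jdel,f]g(x) = \int\phi'_\delta(x-y)(f(y)-f(x))g(y)\,dy - f'(x)\Jdel g(x)$, to which the Taylor trick reapplies through the analogous identity $(y-x)\phi'_\delta(x-y) = -\eta_\delta(x-y)$ with $\eta(w) = w\phi'(w)$, producing again a bound by $\|f_x\|_\infty\|g\|_2$ via Young.

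The non-routine step is (iii). Rather than attack $|D|^{1/2}$ directly, I would view $T: g \mapsto [\Jdel, f]g_x$ as a linear operator and interpolate. Estimate (ii) gives $\|Tg\|_2 \lesssim \|f_x\|_\infty\|g\|_2$. For the $\dot{H}^1$ endpoint I use the identity $\partial_x Tg = [\Jdel, f]g_{xx} + [\Jdel, f_x]g_x$: the first summand is bounded by $\|f_x\|_\infty\|g_x\|_2$ on applying (ii) with $g_x$ in place of $g$, and the second by $2\|f_x\|_\infty\|g_x\|_2$ directly, giving $\|Tg\|_{\dot{H}^1} \lesssim \|f_x\|_\infty\|g\|_{\dot{H}^1}$. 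Complex interpolation between the $L^2 \to L^2$ and $\dot{H}^1 \to \dot{H}^1$ bounds — equivalently, interpolation of weighted $L^2$ spaces on the Fourier side with weights $1$ and $|\xi|^2$, giving weight $|\xi|$ at parameter $\tfrac{1}{2}$ — then yields $T:\dot{H}^{1/2}\to \dot{H}^{1/2}$ with norm $\lesssim \|f_x\|_\infty$, which is precisely (iii).

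The main obstacle is (iii). A direct physical-space approach, starting from $|D|^{1/2}[\Jdel,f]g_x = |D|^{1/2}\partial_x[\Jdel,f]g - |D|^{1/2}[\Jdel,f_x]g$, would require control of $\|[\Jdel,f]g\|_{\dot{H}^{3/2}}$; differentiating twice naturally produces $[\Jdel, f_{xx}]g$-type commutators, and the norm $\|f_{xx}\|_\infty$ is simply not available. Integrating by parts to shift the second derivative back onto the kernel introduces $\phi'_\delta$, which costs an unwanted $1/\delta$ factor. The operator-interpolation argument sidesteps this difficulty entirely because at the $\dot{H}^1$ endpoint only commutators with $f$ and $f_x$ appear, both controlled by $\|f_x\|_\infty$.
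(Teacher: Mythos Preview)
Your argument is correct. The paper does not supply its own proof here but simply cites Section~7 of \cite{Ag21}, so there is no in-paper argument to compare against. Your route---Taylor expansion combined with the rescaling identity $(y-x)\phi_\delta(x-y)=-\delta\,\psi_\delta(x-y)$ for (i) and (ii), and complex interpolation between the $L^2\to L^2$ and $\dot H^1\to\dot H^1$ endpoint bounds for (iii)---is self-contained and standard. The interpolation step is an efficient way to reach $\dot H^{1/2}$: a direct attack via $\partial_x[\Jdel,f]g_x$ followed by $|D|^{1/2}$ would force either a $[\Jdel,f_{xx}]$ commutator (unavailable under the hypothesis $f_x\in L^\infty$) or a $\phi'_\delta$ kernel costing a factor $\delta^{-1}$, whereas at the $\dot H^1$ endpoint only $[\Jdel,f]$ and $[\Jdel,f_x]$ appear and both are controlled by $\|f_x\|_\infty$. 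One minor remark: in (i) the phrase ``bound the $\tau$-average by $\|f_x\|_\infty$ pointwise, then apply Minkowski in $\tau$'' is redundant---once the pointwise bound is taken the $\tau$-integral is gone and Young's inequality alone suffices---but this does not affect the validity of the argument.
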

\begin{proof}
The above estimates are proven in Section 7 of \cite{Ag21}.
\end{proof}

\begin{prop}\label{prop:JdeltaHilnorm}
Fix $-\frac{1}{4} \leq \alpha \leq \frac{1}{4}$  and let the weight $W:\mathbb{R}\rightarrow (0,\infty)$ satisfy
\begin{align}\label{eq:weightcondition}
	(1 + \abs{x})^{\alpha} \lesssim W(x)  \lesssim (1 + \abs{x})^{\alpha} \qq \tx{ for all } x \in \Rsp
\end{align}
Let $f: \Rsp \to \Rsp$ and let $M$ is the Hardy Littlewood maximal operator. If $Wf\in L^{2}(\mathbb{R})$ and $J_{\delta}$ is the operator defined in \eqref{def:Jdelta}, then for all $0 < \delta \leq 1$,
\begin{align*}
\norm[L^{2}]{WJ_{\delta}f} + \norm[L^{2}]{W M(f)} + \norm[L^{2}]{W\Hil f}&\lesssim \norm[L^{2}]{Wf}.
\end{align*}
\end{prop}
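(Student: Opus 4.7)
The plan is to handle the two estimates separately, since they rely on different ideas.

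For the Hilbert transform bound, I would observe that under the hypothesis on $W$, the squared weight satisfies $W(x)^2 \asymp (1+|x|)^{2\ep}$ with $-1 < 2\ep < 1$. This is a classical Muckenhoupt $A_2$ weight on $\Rsp$, so by the Hunt--Muckenhoupt--Wheeden theorem the Hilbert transform is bounded on $L^2(W^2\,dx)$, which is exactly the claim $\|W\Hil f\|_{L^2(dx)} \lesssim \|Wf\|_{L^2(dx)}$. If one prefers a self-contained proof, an equivalent elementary approach is to conjugate and write the operator $g \mapsto W\cdot \Hil(W^{-1}g)$ as $\Hil$ plus an integral operator with kernel $\tfrac{1}{i\pi}\tfrac{W(x)-W(y)}{W(y)(x-y)}$, and then exploit the Lipschitz-type bound $|W(x)-W(y)| \lesssim |x-y|\,(1+\min(|x|,|y|))^{\ep-1}$ to control the error by a Schur test or by pointwise domination by the Hardy--Littlewood maximal function on a standard annular decomposition.

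For the mollifier estimate, I would introduce the conjugated operator $T_\del g(x) := W(x)\,J_\del(W^{-1}g)(x)$, whose kernel is $K_\del(x,y) = \phi_\del(x-y)\,W(x)/W(y)$. The key observation is that because $\phi$ is supported in $(-1,1)$, the kernel $K_\del$ is supported in $|x-y|\leq \del < 1$, and on this region the ratio $W(x)/W(y)$ is uniformly bounded independently of $\delta$. Indeed, when $\max(|x|,|y|) \leq 3$, both $1+|x|$ and $1+|y|$ lie in $[1,4]$ and the ratio is harmless, while when $\max(|x|,|y|) > 3$, the constraint $|x-y|\leq 1$ forces $(1+|x|)/(1+|y|)$ and its reciprocal into a fixed bounded interval independent of $\delta$. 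Consequently $|T_\del g(x)| \lesssim |\phi_\del|*|g|(x) \lesssim M|g|(x)$, where $M$ is the Hardy--Littlewood maximal function. Taking $L^2$ norms and applying $\|M\|_{L^2\to L^2} < \infty$ yields $\|T_\del g\|_{L^2}\lesssim \|g\|_{L^2}$ uniformly in $\del$. Substituting $g = Wf$ gives the desired bound.

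The proof is straightforward once the weight is recognized as $A_2$; the main obstacle, if any, is purely expository: deciding whether to cite Hunt--Muckenhoupt--Wheeden or to present a short self-contained argument for this particular power-type weight. Since $\ep \in (-\tfrac12,\tfrac12)$ is strictly inside the admissible range, there is no borderline issue, and the same observation that controls $W(x)/W(y)$ on short scales for $J_\del$ can be recycled to control the off-diagonal error term for $\Hil$, making the whole proposition a short and self-contained piece.
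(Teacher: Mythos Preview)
Your proposal is correct. For the Hilbert transform, your argument is identical to the paper's: recognize $W^2$ as an $A_2$ weight and invoke the weighted boundedness of $\Hil$.

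For the mollifier, your route differs slightly from the paper's. You conjugate first, writing $T_\delta g = W J_\delta(W^{-1}g)$, and use that on the support $|x-y|<\delta<1$ of the kernel the ratio $W(x)/W(y)$ is uniformly bounded; this reduces matters to the \emph{unweighted} maximal inequality. The paper instead bounds $|J_\delta f|$ pointwise by $M f$ (a standard fact for approximate identities) and then applies the \emph{weighted} maximal inequality, which again follows from $W^2\in A_2$. Your approach is marginally more elementary for this part, since it avoids citing the weighted maximal theorem and makes explicit use of the restriction $\delta<1$; the paper's approach is more uniform, treating $\Hil$ and $J_\delta$ by the same $A_2$ machinery. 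Either way the proof is short and there is no gap.
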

\begin{proof}
It is easy to check that \eqref{eq:weightcondition} implies that $W^2$ is a Muckenhoupt $A_2$ weight. Hence as the Hilbert transform is bounded on weighted $L^2$ spaces, we immediately get $ \norm[L^{2}]{W\Hil f} \lesssim \norm[L^{2}]{Wf}$. We also see that 
\begin{align*}
\abs{W(x)J_\delta f(x)} \lesssim W(x)M(f)(x) \qq \tx{ for all } x \in \Rsp
\end{align*}
As the maximal operator is also bounded on weighted $L^2$ spaces, we therefore see that $\norm[L^{2}]{WJ_{\delta}f} \lesssim \norm[L^{2}]{W M(f)} \lesssim \norm[L^{2}]{Wf}$. Hence proved. 
\end{proof}

\begin{prop}\label{prop:commutator}
Consider the operator $J_{\delta}$ as defined in \eqref{def:Jdelta} for $0<\delta \leq 1$. Consider $\ep, \ep', \ep''\geq 0$.	Suppose for all $x\neq y$, the function  $W:\mathbb{R}\rightarrow\mathbb{R}$ satisfies \begin{align*}
		\abs{\frac{W(y)-W(x)}{y-x}} \lesssim 1+ \abs{x}^{\ep} + \abs{y}^{\ep}.
	\end{align*}
Further, assume that we have weights $\nu, \mu:\mathbb{R}\rightarrow (0,\infty)$ so that for all $x\in\mathbb{R}$,
\begin{align*}
\abs{\nu(x)}, \abs{\nu(x)}^{-1}\lesssim 1+\abs{x}^{\ep'}
\end{align*} and 
\begin{align*}
\abs{\mu(x)}, \abs{\mu(x)}^{-1}\lesssim 1+\abs{x}^{\ep''}.
\end{align*}
If $q\in L^{2}(\mathbb{R})$ is compactly supported in the interval $[-\frac{1}{\delta_{0}},\frac{1}{\delta_{0}}]$ for some $0 < \delta_{0} \leq 1$, then
	\begin{align*}
		\norm[L^{2}]{\nu \sqbrac{J_{\delta},W}q} \lesssim \delta \delta_{0}^{-\ep-\ep'-\ep''}\norm[L^{2}]{\mu q}.
	\end{align*}
\end{prop}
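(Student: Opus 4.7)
The plan is to work directly from the integral representation of the commutator. By definition,
\begin{align*}
[J_\delta,W]q(x) \;=\; J_\delta(Wq)(x)-W(x)J_\delta q(x) \;=\; \int_{\mathbb{R}} \phi_\delta(x-y)\bigl(W(y)-W(x)\bigr)q(y)\,dy,
\end{align*}
and since $\phi_\delta$ is supported in $(-\delta,\delta)$, every $y$ contributing to the integral satisfies $|x-y|<\delta$. The Lipschitz-type hypothesis on $W$ then gives $|W(y)-W(x)|\lesssim \delta\bigl(1+|x|^\ep+|y|^\ep\bigr)$ pointwise in the integrand, producing the crucial factor of $\delta$.

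Next I would exploit the support hypothesis on $q$. Since $\mathrm{supp}(q)\subset[-1/\delta_0,1/\delta_0]$ and $|x-y|<\delta<1$, the integrand vanishes unless $|y|\le 1/\delta_0$ and $|x|\le 1/\delta_0+1\lesssim 1/\delta_0$. On this effective support, the polynomial weights are controlled by
\begin{align*}
1+|x|^\ep+|y|^\ep\lesssim \delta_0^{-\ep},\qquad \nu(x)\lesssim \delta_0^{-\ep'},\qquad \mu(y)^{-1}\lesssim \delta_0^{-\ep''},
\end{align*}
using $0<\delta_0<1$ throughout. Plugging these bounds into the integral representation yields the pointwise estimate
\begin{align*}
\bigl|\nu(x)[J_\delta,W]q(x)\bigr|\;\lesssim\; \delta\,\delta_0^{-\ep-\ep'}\,J_\delta(|q|)(x),
\end{align*}
valid for $x$ in the effective support (and both sides vanish outside a $\delta$-neighborhood of $[-1/\delta_0,1/\delta_0]$, so the inequality extends trivially to all of $\mathbb{R}$).

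Finally I would take $L^2$ norms and invoke the $L^2$-boundedness of $J_\delta$ (Proposition \ref{prop:JdeltaHilnorm}, or just Young's inequality with $\|\phi_\delta\|_{L^1}=1$) to conclude $\|J_\delta(|q|)\|_{L^2}\le \|q\|_{L^2}$, and then absorb the final factor of $\delta_0^{-\ep''}$ by writing $\|q\|_{L^2}\le\|\mu^{-1}\|_{L^\infty(\mathrm{supp}\,q)}\|\mu q\|_{L^2}\lesssim \delta_0^{-\ep''}\|\mu q\|_{L^2}$. Combining these gives $\|\nu[J_\delta,W]q\|_{L^2}\lesssim \delta\,\delta_0^{-\ep-\ep'-\ep''}\|\mu q\|_{L^2}$, as claimed. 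There is no real obstacle here: the two mechanisms are entirely separate, with the $\delta$ gain coming from the Lipschitz-type bound on $W$ at scale $\delta$ and the $\delta_0$-loss coming from the polynomial growth of the weights on the $\delta_0$-dependent support of $q$.
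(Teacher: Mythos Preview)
Your proof is correct and follows essentially the same approach as the paper: both use the integral representation of the commutator, extract the factor $\delta$ from the Lipschitz-type bound on $W$ at scale $\delta$, exploit the compact support of $q$ to bound all polynomial weights by powers of $\delta_0^{-1}$, and conclude with Young's inequality. The only cosmetic difference is that the paper inserts $\mu(y)^{-1}\mu(y)$ inside the integral before convolving, whereas you first bound by $J_\delta(|q|)$ and insert $\mu$ afterward; the two orderings are equivalent.
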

\begin{proof}
	We have the pointwise estimate:
\begin{align*}
	\abs{\sqbrac{J_{\delta},W}q(x)}&= \abs{J_{\delta}\brac{W q}(x) - W(x) J_{\delta}q(x)}\\
	&= \abs{\int_{\mathbb{R}} \frac{1}{\delta}\phi\brac{\frac{x-y}{\delta}}(W(y)-W(x)) q(y) \diff y}\\
	&\leq \delta \int_{\mathbb{R}} \frac{1}{\delta}\phi\brac{\frac{x-y}{\delta}}\frac{\abs{x-y}}{\delta}\abs{\frac{W(y)-W(x)}{y-x}} \abs{q(y)} \diff y\\
	&\lesssim \delta \int_{\mathbb{R}} \frac{1}{\delta}\phi\brac{\frac{x-y}{\delta}}\frac{\abs{x-y}}{\delta}\brac{1 + \abs{x}^{\ep} + \abs{y}^{\ep}} \abs{q(y)} \diff y.
\end{align*}	
We have that $\phi\brac{\frac{x-y}{\delta}} = 0$ if $\abs{x-y} > \delta$ and $q(y) = 0$ if $\abs{y} > \frac{1}{\delta_{0}}$. Hence,
	\begin{align*}
	\abs{\sqbrac{J_{\delta},W}q(x)}&\lesssim \delta \delta_{0}^{-\ep}\int_{\mathbb{R}} \frac{1}{\delta}\phi\brac{\frac{x-y}{\delta}}\frac{\abs{x-y}}{\delta} \abs{q(y)}  \diff y\\
	&\lesssim \delta \delta_{0}^{-\ep-\ep''}\int_{\mathbb{R}} \frac{1}{\delta}\phi\brac{\frac{x-y}{\delta}}\frac{\abs{x-y}}{\delta} \abs{\mu(y)}\abs{q(y)}  \diff y.
\end{align*}
Since $\sqbrac{J_{\delta},W}q(x)$ is supported in the region $\abs{x}\leq \frac{2}{\delta_{0}}$, we obtain that
	\begin{align*}
		\abs{\nu(x)\sqbrac{J_{\delta},W}q(x)} &\lesssim \delta_{0}^{-\ep'}\abs{\sqbrac{J_{\delta},W}q(x)}.
	\end{align*}
Thus, by Young's inequality for convolutions,
	\begin{align*}
		\norm[L^{2}]{\nu \sqbrac{J_{\delta},W}q} \lesssim \delta \delta_{0}^{-\ep-\ep'-\ep''}\norm[L^{2}]{\mu q}.
	\end{align*}
	This concludes the proof.
\end{proof}

\begin{prop}\label{prop:deltaconvergenceL2}
Let $0 \leq \ep \leq \frac{1}{10}$. Suppose  $W:\mathbb{R}\rightarrow\mathbb{R}$ satisfies \begin{align*}
	\abs{\frac{W(y)-W(x)}{y-x}} \lesssim 1 + \abs{x}^{\ep} + \abs{y}^{\ep}
\end{align*}
 for all $x\neq y$ and
 \begin{align*}
 \abs{W(x)} \lesssim 1+ \abs{x}^{1+\ep}
 \end{align*}
 for all $x\in \mathbb{R}$. Let $0< \delta \leq 1$ and consider the operator $J_{\delta}$ defined in \eqref{def:Jdelta} and the smooth cutoff function $\chi_{\delta}$ defined in \eqref{def:chidelta}. Let $f\in L^{2}_{loc}\brac{\mathbb{R}}$ and assume that there exists a uniform constant $C_{0}>0$ such that for all $0 < \delta \leq 1$ we have
\begin{align*}
\norm[L^{2}]{W \chi_{\delta}J_{\delta}f} \leq C_{0}
\end{align*}
Then, $W f \in L^{2}(\mathbb{R})$ and moreover,
\begin{align*}
\norm[L^{2}]{W f} \leq C_{0}.
\end{align*}
\end{prop}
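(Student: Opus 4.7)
The plan is a direct application of pointwise convergence and Fatou's lemma; the growth and Lipschitz hypotheses on $W$ play no essential role in the argument beyond guaranteeing that $W$ is continuous and locally bounded, so that $Wf$ is a well-defined measurable function.

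First I would establish pointwise almost-everywhere convergence of $\chi_\delta J_\delta f$ to $f$. For any fixed $x \in \mathbb{R}$, once $\delta$ is small enough that $\delta |x| \leq 1/2$, the definition of $\chi_\delta$ in \eqref{def:chidelta} gives $\chi_\delta(x) = \chi(\delta x) = 1$; in particular $\chi_\delta(x) \to 1$ pointwise. Since $f \in L^2_{loc}(\mathbb{R}) \subset L^1_{loc}(\mathbb{R})$ and $J_\delta f = \phi_\delta \ast f$ is the standard mollification by an approximate identity, the Lebesgue differentiation theorem gives $J_\delta f(x) \to f(x)$ at every Lebesgue point of $f$, hence for a.e.\ $x$. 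Combining these,
\[
\chi_\delta(x) J_\delta f(x) \to f(x) \qquad \text{for a.e. } x \in \mathbb{R},
\]
and consequently $|W(x)\chi_\delta(x) J_\delta f(x)|^2 \to |W(x) f(x)|^2$ a.e.

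Next, I would apply Fatou's lemma to this a.e.\ limit together with the uniform bound from the hypothesis:
\[
\int_{\mathbb{R}} |Wf|^2 \, dx \leq \liminf_{\delta \to 0^+} \int_{\mathbb{R}} |W\chi_\delta J_\delta f|^2 \, dx \leq C_0^2.
\]
This simultaneously gives $Wf \in L^2(\mathbb{R})$ and the claimed norm estimate $\|Wf\|_{L^2} \leq C_0$.

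There is no serious obstacle here: the entire argument rests on the classical fact that the standard mollification of an $L^1_{loc}$ function converges pointwise a.e.\ to the function, together with lower semicontinuity of the $L^2$ norm under pointwise a.e.\ limits. An alternative route would be to use the Banach--Alaoglu theorem to extract a weakly convergent subsequence of $\{W\chi_\delta J_\delta f\}$ in $L^2$, identify the weak limit with $Wf$ by testing against compactly supported functions (using $L^2_{loc}$ convergence of $\chi_\delta J_\delta f$ to $f$ on any compact set, where $\chi_\delta \equiv 1$ eventually), and then invoke weak lower semicontinuity of the norm; however, the Fatou approach is more direct.
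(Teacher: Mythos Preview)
Your proof is correct and takes a genuinely different, more elementary route than the paper's. The paper fixes an outer scale $\tau$ and an inner scale $\delta < \tau/2$, writes $\chi_\tau J_\delta(\chi_{\tau'} f) = \chi_\tau \chi_\delta J_\delta f$, and then shows the $L^2$ convergence $W\chi_\tau J_\delta(\chi_{\tau'} f) \to W\chi_\tau f$ as $\delta \to 0$ by estimating the commutator $[W\chi_\tau, J_\delta]$ via Proposition~\ref{prop:commutator}; this is precisely where the Lipschitz-type hypothesis on $W$ enters. Having obtained $\|W\chi_\tau f\|_{L^2} \leq C_0$ uniformly in $\tau$, they then send $\tau \to 0$. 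Your Fatou argument bypasses all of this: pointwise a.e.\ convergence of mollifications of an $L^1_{loc}$ function plus lower semicontinuity of the $L^2$ norm suffices, and the difference-quotient and growth hypotheses on $W$ are indeed irrelevant for the conclusion as stated. The paper's approach does fit the commutator machinery developed elsewhere in the appendix, but for this particular proposition your argument is both shorter and applies under weaker hypotheses.
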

\begin{proof}
Consider a fixed $0 < \tau \leq 1$, consider $\delta$ such that $0 < \delta < \frac{\tau}{2}$, and set $\tau' = \frac{\tau}{4}$. Then,
	\begin{align*}
	\chi_{\tau}J_{\delta}\brac{\chi_{\tau'}f} = \chi_{\tau}J_{\delta}f = \chi_{\tau} \chi_{\delta}J_{\delta}f.
	\end{align*}
Hence, we have that
\begin{align}\label{ineq:chitaubound}
	\norm[L^{2}]{W \chi_{\tau}J_{\delta}\brac{\chi_{\tau'}f}} = \norm[L^{2}]{W \chi_{\tau} \chi_{\delta}J_{\delta}f} \leq  C_{0}.
\end{align}
Next, by Proposition \ref{prop:commutator},
\begin{align*}
\norm[L^{2}]{W\chi_{\tau}J_{\delta}\brac{\chi_{\tau'}f} - W\chi_{\tau}f} &\leq \norm[L^{2}]{\sqbrac{W\chi_{\tau},J_{\delta}}\brac{\chi_{\tau'}f}}  + \norm[L^{2}]{J_{\delta}\brac{W\chi_{\tau}\chi_{\tau'}f} -W\chi_{\tau}f}\\
&\lesssim \delta \tau^{-2}\norm[L^{2}]{W \chi_{\tau'}f} + \norm[L^{2}]{J_{\delta}\brac{W\chi_{\tau}f} -W\chi_{\tau}f}.
\end{align*}
Since $W \chi_{\tau}f \in L^{2}$ for fixed $0 <\tau \leq 1$, we know that
\begin{align*}
\lim_{\delta\rightarrow 0}\norm[L^{2}]{J_{\delta}\brac{W\chi_{\tau}f} -W\chi_{\tau}f} = 0.
\end{align*}
Thus, for fixed $ 0 < \tau \leq 1$,
\begin{align*}
\lim_{\delta\rightarrow 0}\norm[L^{2}]{W\chi_{\tau}J_{\delta}\brac{\chi_{\tau'}f} - W\chi_{\tau}f}  = 0.
\end{align*}
By \eqref{ineq:chitaubound}, it follows that $\norm[L^{2}]{W \chi_{\tau}f} \leq C_{0}$ for all $0 < \tau \leq 1$. Thus, we can conclude that $\norm[L^{2}]{W f} \leq C_{0}$.
\end{proof}

\begin{prop}\label{prop:weightsobolevinterpolation}
Suppose $W:\mathbb{R}\rightarrow (0,\infty)$ satisfies $W \in \dot{H}^1(\Rsp)$ and that $W^{k}W_{x}\in L^{2}(\mathbb{R})$ for all $-10\leq k \leq 10$, $k \in \Zsp$. Also assume that either $W\in L^{\infty}(\mathbb{R})$ or $W^{-1}\in L^{\infty}(\mathbb{R})$. 
	
Let $f:\mathbb{R}\rightarrow \mathbb{R}$. If $W^3f \in \Ltwo(\mathbb{R})$ and $f \in \dot{H}^{\frac{3}{2}}(\mathbb{R})$, then $W^{2}\abs{D}^{\frac{1}{2}} f \in L^{2}(\mathbb{R})$ and $W \abs{D} f \in L^{2}(\mathbb{R})$. Moreover,
	\begin{align*}
		\norm[2]{W^2\abs{D}^{\frac{1}{2}} f} +	\norm[2]{W \abs{D} f} &\lesssim_{W}	\norm[2]{W^3 f} + \norm[\dot{H}^{\frac{3}{2}}]{f}.
	\end{align*}
\end{prop}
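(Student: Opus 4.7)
The plan is a weighted Gagliardo--Nirenberg interpolation via coupled bilinear estimates. Set $A = \norm[L^{2}]{W^{2}\abs{D}^{1/2}f}$ and $B = \norm[L^{2}]{W\abs{D}f}$. I would establish
\begin{align*}
A^{2} &\lesssim_{W} \norm[L^{2}]{W^{3}f}\, B + \text{(l.o.t.)},\\
B^{2} &\lesssim_{W} \norm[\dot{H}^{3/2}]{f}\, A + \text{(l.o.t.)},
\end{align*}
and then close the bootstrap via Young's inequality to recover $A + B \lesssim_{W} \norm[L^{2}]{W^{3}f} + \norm[\dot{H}^{3/2}]{f}$.

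For the first inequality, I would use self-adjointness of $\abs{D}^{1/2}$ to write
\[
A^{2} = \int W^{4}\brac{\abs{D}^{1/2}f}^{2}\diff x = \int f\cdot \abs{D}^{1/2}\brac{W^{4}\abs{D}^{1/2}f}\diff x.
\]
Expanding $\abs{D}^{1/2}(W^{4}\abs{D}^{1/2}f) = W^{4}\abs{D}f + [\abs{D}^{1/2},W^{4}]\abs{D}^{1/2}f$, the main term is estimated by Cauchy--Schwarz as
\[
\abs{\int W^{4}f\cdot \abs{D}f\,\diff x} = \abs{\int (W^{3}f)(W\abs{D}f)\,\diff x} \leq \norm[L^{2}]{W^{3}f}\,B,
\]
while the commutator, after invoking the skew-adjointness of $[\abs{D}^{1/2},W^{4}]$ together with Proposition \ref{prop:standardcommutator}(3), is bounded by $C_{W}\norm[L^{2}]{f}\norm[L^{2}]{\abs{D}^{1/2}f}$, where $C_{W}$ depends only on $\norm[L^{2}]{(W^{4})_{x}}\lesssim \norm[L^{2}]{W^{3}W_{x}}$, which is finite by hypothesis. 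The second inequality follows analogously: splitting one $\abs{D}^{1/2}$ off $\abs{D} f$ yields
\[
B^{2} = \int (\abs{D}^{1/2}f)\cdot \abs{D}^{1/2}\brac{W^{2}\abs{D}f}\diff x,
\]
whose main term is bounded by $A\,\norm[\dot{H}^{3/2}]{f}$ plus a commutator of the same type.

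To close the estimates, I would dominate the lower-order terms $\norm[L^{2}]{f}$ and $\norm[L^{2}]{\abs{D}^{1/2}f}$ by $\norm[L^{2}]{W^{3}f}$ and $\norm[\dot{H}^{3/2}]{f}$. If $W^{-1}\in L^{\infty}$, then $\norm[L^{2}]{f}\lesssim_{W}\norm[L^{2}]{W^{3}f}$ is immediate, and $\norm[L^{2}]{\abs{D}^{1/2}f}\lesssim \norm[L^{2}]{f}^{2/3}\norm[\dot{H}^{3/2}]{f}^{1/3}$ by classical Gagliardo--Nirenberg; the case $W\in L^{\infty}$ is handled symmetrically using the hypothesis $W^{-k}W_{x}\in L^{2}$ to transfer derivatives. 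The formal manipulations would be justified by first mollifying $f\mapsto J_{\delta}f$ (with Proposition \ref{prop:appendixJdelta} controlling the resulting commutator errors), deriving uniform-in-$\delta$ bounds, and passing to the limit. The main obstacle is ensuring that the commutator remainders are genuinely of lower order than $A$ and $B$, rather than reintroducing them and causing the bootstrap to fail; the dichotomy $W\in L^{\infty}$ or $W^{-1}\in L^{\infty}$ is essential precisely at this step, and the full range $-10\leq k\leq 10$ in the hypothesis on $W^{k}W_{x}$ provides the flexibility needed to accommodate the various powers of $W$ that arise from Leibniz-type expansions.
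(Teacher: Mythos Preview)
Your overall framework---the coupled bilinear estimates $A^{2}\lesssim_{W}\norm[L^{2}]{W^{3}f}\,B+(\text{comm.})$ and $B^{2}\lesssim_{W}\norm[\dot{H}^{3/2}]{f}\,A+(\text{comm.})$, closed by Young---matches the paper's approach exactly, and your treatment of the case $W^{-1}\in L^{\infty}$ is essentially correct (the paper bounds the commutator contribution directly by $A$ rather than by $\norm[L^{2}]{f}\norm[L^{2}]{\abs{D}^{1/2}f}$, but when $W^{-1}\in L^{\infty}$ these are equivalent up to constants).

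The gap is in the case $W\in L^{\infty}$. There your commutator bound $C_{W}\norm[L^{2}]{f}\norm[L^{2}]{\abs{D}^{1/2}f}$ is unusable: when $W$ may decay at infinity, neither $\norm[L^{2}]{f}$ nor $\norm[L^{2}]{\abs{D}^{1/2}f}$ is controlled by $\norm[L^{2}]{W^{3}f}+\norm[\dot{H}^{3/2}]{f}$ (indeed $f$ need not lie in $L^{2}$ at all under the hypotheses), so these are not ``lower order'' and the bootstrap does not close. The phrase ``handled symmetrically using $W^{-k}W_{x}\in L^{2}$'' does not resolve this, because the issue is not the constant in the commutator estimate but the \emph{argument} to which Proposition~\ref{prop:standardcommutator}(3) is applied. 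The paper's remedy is an algebraic identity that pushes $W^{2}\abs{D}^{1/2}f$ \emph{inside} the commutator before estimating; for instance
\[
W^{-3}\bigl[\abs{D}^{1/2},W^{4}\bigr]\abs{D}^{1/2}f
= \bigl[\abs{D}^{1/2},W^{-1}\bigr]\bigl(W^{2}\abs{D}^{1/2}f\bigr)
- W\bigl[\abs{D}^{1/2},W^{-2}\bigr]\bigl(W^{2}\abs{D}^{1/2}f\bigr)
- \bigl[\abs{D}^{1/2},W^{-3}\bigr]\bigl(W^{4}\abs{D}^{1/2}f\bigr),
\]
and each piece is then bounded via Proposition~\ref{prop:standardcommutator}(3) by $\norm[L^{2}]{W^{-k}W_{x}}\cdot\norm[L^{\infty}]{W}^{j}\cdot A$. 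The commutator in the $B^{2}$-estimate is rewritten analogously as $W^{-1}[W^{2},\abs{D}^{1/2}]\abs{D}^{1/2}f=-W[W^{-2},\abs{D}^{1/2}](W^{2}\abs{D}^{1/2}f)$. This places all remainder terms back in terms of $A$ rather than $\norm[L^{2}]{f}$, so the coupled system closes without ever invoking unweighted $L^{2}$-control on $f$. You correctly flag that the dichotomy is the crux, but you have not supplied this step.
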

\begin{proof}
We will prove this theorem for $f\in\Scal(\mathbb{R})$. The theorem then follows by approximation. 

Let us first consider the case where $W^{-1}\in L^{\infty}(\mathbb{R})$. We have
	\begin{align*}\label{ineq:firstinequalityinterpolation}
		\norm[2]{W^2 \abs{D}^{\frac{1}{2}} f}^2 & = \int_\Rsp W^4 (\abs{D}^{\frac{1}{2}} f)(\abs{D}^{\frac{1}{2}} f) \diff x \\
		& = \int_\Rsp \brac{\sqbrac{\abs{D}^{\frac{1}{2}}, W^4}\abs{D}^{\frac{1}{2}} f}(f) \diff x + \int_\Rsp (W\abs{D} f)(W^3 f) \diff x\\
		& = \int_\Rsp \brac{W^{-3}\sqbrac{\abs{D}^{\frac{1}{2}}, W^4}\abs{D}^{\frac{1}{2}} f}(W^3f) \diff x + \int_\Rsp (W\abs{D} f)(W^3 f) \diff x.
	\end{align*}
	Then,
	\begin{align*}
		\norm[2]{W^{-3}\sqbrac{\abs{D}^{\frac{1}{2}}, W^4}\abs{D}^{\frac{1}{2}} f} & \lesssim \norm[\infty]{W^{-1}}^3\norm[2]{W^3 W_{x}}\norm[2]{\abs{D}^{\frac{1}{2}} f} \\
		& \lesssim \norm[\infty]{W^{-1}}^5\norm[2]{W^3 W_{x}}\norm[2]{W^2\abs{D}^{\frac{1}{2}} f}.
	\end{align*}
	Therefore,
	\begin{align*}
		\norm[2]{W^2 \abs{D}^{\frac{1}{2}} f}^2 \lesssim \norm[\infty]{W^{-1}}^5\norm[2]{W^3 W_{x}}\norm[2]{W^2 \abs{D}^{\frac{1}{2}} f}\norm[2]{W^3f} + \norm[2]{W\abs{D} f}\norm[2]{W^3 f}
	\end{align*}
	Hence
	\begin{align}
		\norm[2]{W^2 \abs{D}^{\frac{1}{2}} f}^2 \lesssim_{W} \norm[2]{W^3 f}^2 + \norm[2]{W\abs{D} f}\norm[2]{W^3 f}
	\end{align}
	Next, we have that
	\begin{align*}
		\norm[2]{W \abs{D} f}^2 & = \int_\Rsp W^2 (\abs{D} f)(\abs{D} f) \diff x \\
		& =  \int_\Rsp W^{-1}\brac{\sqbrac{ W^2, \abs{D}^{\frac{1}{2}}} \abs{D}^{\frac{1}{2}} f} W\abs{D} f \diff x  +  \int_\Rsp (W^2 \abs{D}^{\frac{1}{2}} f)(\abs{D}^{\frac{3}{2}} f) \diff x.
	\end{align*}
	We have that
	\begin{align*}
		W^{-1}\sqbrac{ W^2, \abs{D}^{\frac{1}{2}}} \abs{D}^{\frac{1}{2}} f &= \sqbrac{ W, \abs{D}^{\frac{1}{2}}} \abs{D}^{\frac{1}{2}} f - \sqbrac{ W^{-1}, \abs{D}^{\frac{1}{2}}} \brac{W^{2}\abs{D}^{\frac{1}{2}} f }
	\end{align*}
	where
	\begin{align*}
		\norm[L^{2}]{ \sqbrac{ W, \abs{D}^{\frac{1}{2}}} \abs{D}^{\frac{1}{2}} f} &\lesssim \norm[L^{2}]{W_{x}}\norm[L^{\infty}]{W^{-1}}^{2}\norm[L^{2}]{W^{2}\abs{D}^{\frac{1}{2}} f}
	\end{align*}
	and
	\begin{align*}
		\norm[L^{2}]{\sqbrac{ W^{-1}, \abs{D}^{\frac{1}{2}}} W^{2}\abs{D}^{\frac{1}{2}} f } &\lesssim \norm[L^{2}]{W^{-2}W_{x}}\norm[L^{2}]{W^{2}\abs{D}^{\frac{1}{2}} f}.
	\end{align*}
	Hence,
	\begin{align*}
		\norm[2]{W \abs{D} f}^2 & \lesssim\norm[L^{2}]{W_{x}}\norm[L^{\infty}]{W^{-1}}^{2}\norm[L^{2}]{W^{2}\abs{D}^{\frac{1}{2}} f}	\norm[2]{W \abs{D} f} + \norm[L^{2}]{W^{2}\abs{D}^{\frac{1}{2}} f}\norm[\dot{H}^{\frac{3}{2}}]{f}
	\end{align*}
	and thus,
		\begin{align*}
		\norm[2]{W \abs{D} f}^2 & \lesssim\norm[L^{2}]{W_{x}}^{2}\norm[L^{\infty}]{W^{-1}}^{4}\norm[L^{2}]{W^{2}\abs{D}^{\frac{1}{2}} f}^{2}	 + \norm[L^{2}]{W^{2}\abs{D}^{\frac{1}{2}} f}\norm[\dot{H}^{\frac{3}{2}}]{f}.
	\end{align*}
	In particular,
			\begin{align}\label{ineq:secondinterpolationinequality}
		\norm[2]{W \abs{D} f} & \lesssim_{W} \norm[L^{2}]{W^{2}\abs{D}^{\frac{1}{2}} f}	 + \norm[\dot{H}^{\frac{3}{2}}]{f}.
	\end{align}
	Plugging \eqref{ineq:secondinterpolationinequality} into \eqref{ineq:firstinequalityinterpolation}, we conclude the proof in the case where $W^{-1}\in L^{\infty}(\mathbb{R})$.
	
Now let us consider the case where $W\in L^{\infty}(\mathbb{R})$. As before, we have
	\begin{align*}
	\norm[2]{W^2 \abs{D}^{\frac{1}{2}} f}^2 & =  \int_\Rsp \brac{W^{-3}\sqbrac{\abs{D}^{\frac{1}{2}}, W^4}\abs{D}^{\frac{1}{2}} f}(W^3f) \diff x + \int_\Rsp (W\abs{D} f)(W^3 f) \diff x.
\end{align*}
In this case, we have that
	\begin{align*}
		& W^{-3}\sqbrac{\abs{D}^\half, W^4}\abs{D}^\half f \\
		& = \sqbrac{\abs{D}^\half, W}\abs{D}^\half f - \sqbrac{\abs{D}^\half, W^{-3}}(W^4\abs{D}^\half f) \\
		& =  \sqbrac{\abs{D}^\half, W}(W^{-2}W^2\abs{D}^\half f) - \sqbrac{\abs{D}^\half, W^{-3}}(W^4\abs{D}^\half f) \\
		& =  \sqbrac{\abs{D}^\half, W^{-1}}(W^2\abs{D}^\half f) -  W\sqbrac{\abs{D}^\half, W^{-2}}(W^2\abs{D}^\half f) - \sqbrac{\abs{D}^\half, W^{-3}}(W^4\abs{D}^\half f)
	\end{align*}
	These terms are bounded as follows:
	\begin{align*}
		\norm[L^{2}]{ \sqbrac{\abs{D}^\half, W^{-1}}(W^2\abs{D}^\half f)} &\lesssim \norm[L^{2}]{W^{-2}W_{x}}\norm[L^{2}]{W^2\abs{D}^\half f}
	\end{align*}
	and
	\begin{align*}
		\norm[L^{2}]{W\sqbrac{\abs{D}^\half, W^{-2}}(W^2\abs{D}^\half f)} &\lesssim  \norm[L^{\infty}]{W}\norm[L^{2}]{W^{-3}W_{x}}\norm[L^{2}]{W^2\abs{D}^\half f}
	\end{align*}
	and
	\begin{align*}
		\norm[L^{2}]{\sqbrac{\abs{D}^\half, W^{-3}}(W^4\abs{D}^\half f)} &\lesssim \norm[L^{\infty}]{W}^{2}\norm[L^{2}]{W^{-4}W_{x}}\norm[L^{2}]{W^2\abs{D}^\half f}
	\end{align*}
	Therefore
	\begin{align*}
		\norm[2]{W^2 \abs{D}^\half f}^2 \lesssim_{W} \norm[L^{2}]{W^2\abs{D}^\half f}\norm[2]{W^3 f} + \norm[2]{W\abs{D} f}\norm[2]{W^3 f}
	\end{align*}
	Hence
	\begin{align}\label{ineq:firstwinfinitypapabshalf}
		\norm[2]{W^2 \abs{D}^\half f}^2 \lesssim_{W} \norm[2]{W^3 f}^2 + \norm[2]{W\abs{D} f}\norm[2]{W^3 f}
	\end{align}
	Next, as before we have
	\begin{align*}
	\norm[2]{W \abs{D} f}^2
	& =  \int_\Rsp W^{-1}\brac{\sqbrac{ W^2, \abs{D}^{\frac{1}{2}}} \abs{D}^{\frac{1}{2}} f}W\abs{D} f \diff x  +  \int_\Rsp (W^2 \abs{D}^{\frac{1}{2}} f)(\abs{D}^{\frac{3}{2}} f) \diff x.
\end{align*}
	In this case, first observe that
	\begin{align*}
		W^{-1}\sqbrac{ W^2, \abs{D}^{\frac{1}{2}}} \abs{D}^{\frac{1}{2}} f &=- W\sqbrac{W^{-2},\abs{D}^\half}(W^2\abs{D}^\half f)
	\end{align*}
	which is bounded as follows
	\begin{align*}
		\norm[L^{2}]{W\sqbrac{W^{-2},\abs{D}^\half}(W^2\abs{D}^\half f)} &\lesssim \norm[L^{\infty}]{W}\norm[L^{2}]{W^{-3}W_{x}}\norm[L^{2}]{W^{2}\abs{D}^{\frac{1}{2}}f}
	\end{align*}
	Therefore,
	\begin{align*}
		\norm[2]{W \abs{D} f}^2 
		&\lesssim  \norm[L^{2}]{ W^{-1}\sqbrac{ W^2, \abs{D}^{\frac{1}{2}}} \abs{D}^{\frac{1}{2}} f} \norm[L^{2}]{ W\abs{D} f }  + \norm[L^{2}]{W^{2}\abs{D}^{\frac{1}{2}} f}\norm[\dot{H}^{\frac{3}{2}}]{f}\\
		&\lesssim_{W}  \norm[L^{2}]{W^{2}\abs{D}^{\frac{1}{2}}f} \norm[L^{2}]{ W\abs{D} f }  + \norm[L^{2}]{W^{2}\abs{D}^{\frac{1}{2}} f}\norm[\dot{H}^{\frac{3}{2}}]{f}\\
		&\lesssim_{W} \norm[L^{2}]{W^{2}\abs{D}^{\frac{1}{2}}f}^{2} + \norm[\dot{H}^{\frac{3}{2}}]{f}^{2},
	\end{align*}
	which combined with \eqref{ineq:firstwinfinitypapabshalf} implies the statement of the proposition in the case $W\in L^{\infty}(\mathbb{R})$.
\end{proof}

\begin{lem}\label{lem:infinitydecay}
Let $f:\mathbb{R}\rightarrow\mathbb{R}$, let $W_{1}, W_{2}: \mathbb{R} \rightarrow (0,\infty)$ be $C^1$ functions such that 
\begin{align*}
\abs{\partial_{x}\brac{W_{1}(x)W_{2}(x)}} &\lesssim W_{1}(x)^{2}
\end{align*}
and
\begin{align*}
W_{2}(x) &\lesssim \brac{1+x^{2}}^{\frac{1}{2}} W_{1}(x)
\end{align*}
for all $x\in\mathbb{R}$. Let $W = W_{1}^{\half}W_{2}^{\half}$ and let $f: \Rsp \to \Rsp$ be such that $f \in H^1_{loc}(\Rsp)$. If $W_{1}f \in L^{2}(\mathbb{R})$ and $W_{2}f_{x}\in L^{2}(\mathbb{R})$, then $Wf \in L^{\infty}(\mathbb{R})$ and 
\begin{align}\label{ineq:infinityweight}
	\norm[L^{\infty}]{Wf} &\lesssim  \norm[L^{2}]{W_{1}f} + \norm[L^{2}]{W_{2}f_{x}}.
\end{align}
Moreover, $W(x)f(x) \rightarrow 0$ as $\abs{x}\rightarrow \infty$.
\end{lem}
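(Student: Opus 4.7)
The plan is to apply the fundamental theorem of calculus to the function $\phi(x) := W(x)^2 f(x)^2 = W_1(x) W_2(x) f(x)^2$ and combine with the hypothesis on $|\partial_x(W_1 W_2)|$. First I would record that since $W_1, W_2$ are positive and locally bounded away from $0$, the hypotheses $W_1 f \in L^2$ and $W_2 f_x \in L^2$ imply $f, f_x \in L^2_{\loc}$, so $f$ is locally absolutely continuous and $\phi$ is locally absolutely continuous with
\begin{align*}
\phi'(x) = \partial_x(W_1 W_2)(x) f(x)^2 + 2 W_1(x) W_2(x) f(x) f_x(x).
\end{align*}
Using the bound $|\partial_x(W_1 W_2)| \lesssim W_1^2$ and Cauchy--Schwarz,
\begin{align*}
\int_{\Rsp} |\phi'| \, dx
&\lesssim \int_{\Rsp} W_1^2 f^2 \, dx + 2\int_{\Rsp} (W_1 |f|)(W_2 |f_x|) \, dx \\
&\lesssim \|W_1 f\|_{L^2}^2 + \|W_1 f\|_{L^2}\|W_2 f_x\|_{L^2}.
\end{align*}

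Next, since $\phi' \in L^1(\Rsp)$, the function $\phi$ is of bounded variation on $\Rsp$, hence has well-defined limits $L_\pm := \lim_{x \to \pm\infty} \phi(x)$. The key step is to show $L_\pm = 0$. Suppose for contradiction that $L_+ > 0$. Then for all sufficiently large $x$, $W_1(x) W_2(x) f(x)^2 \geq L_+/2$, so
\begin{align*}
W_1(x)^2 f(x)^2 \geq \frac{L_+}{2} \cdot \frac{W_1(x)}{W_2(x)} \gtrsim \frac{L_+}{2}(1+x^2)^{-1/2},
\end{align*}
where the last step uses the hypothesis $W_2 \lesssim (1+x^2)^{1/2} W_1$. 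But $(1+x^2)^{-1/2}$ is not integrable at $+\infty$, contradicting $W_1 f \in L^2$. So $L_+ = 0$, and symmetrically $L_- = 0$. This immediately yields $W(x) f(x) \to 0$ as $|x| \to \infty$.

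Finally, with $L_\pm = 0$, the fundamental theorem of calculus gives for any $x \in \Rsp$
\begin{align*}
W(x)^2 f(x)^2 = -\int_x^\infty \phi'(t) \, dt,
\end{align*}
so $W(x)^2 f(x)^2 \leq \int_{\Rsp} |\phi'| \, dt \lesssim \|W_1 f\|_{L^2}^2 + \|W_1 f\|_{L^2}\|W_2 f_x\|_{L^2}$. Taking square roots and using $\sqrt{ab} \leq \frac{1}{2}(a+b)$ produces the desired bound \eqref{ineq:infinityweight}. The only genuinely non-routine step is step two of the above, namely ruling out nonzero limits at infinity; the growth hypothesis $W_2 \lesssim (1+x^2)^{1/2} W_1$ is exactly what makes $W_1/W_2$ non-integrable at infinity, closing the argument.
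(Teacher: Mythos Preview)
Your proof is correct. The core estimate on $\int_{\Rsp}|\phi'|$ is identical to the paper's computation, but from there the two arguments diverge: the paper first proves the bound for compactly supported $f$ (where the fundamental theorem of calculus applies trivially since $\phi$ vanishes at infinity) and then passes to general $f$ by a cutoff approximation $\chi_\delta f$, showing $W\chi_\delta f$ is Cauchy in $L^\infty$; the growth hypothesis $W_2 \lesssim (1+x^2)^{1/2}W_1$ enters only to control the error term $W_2(\partial_x\chi_\delta)f$. You instead work directly with $f$, observe that $\phi'\in L^1$ forces $\phi$ to have finite limits $L_\pm$ at infinity, and use the same growth hypothesis in a contradiction argument to show $L_\pm=0$. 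Your route is more direct and makes the role of the growth hypothesis more transparent; the paper's route is a standard density-plus-approximation template that generalizes mechanically but is slightly longer here.
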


\begin{proof}
We first demonstrate the lemma with the additional condition that $f:\mathbb{R}\rightarrow\mathbb{R}$ is a compactly supported function. For any $x\in\mathbb{R}$, we have that
\begin{align*}
\abs{W(x)^{2}f(x)^{2}} &= \abs{\int_{-\infty}^{x} \partial_{y}\brac{W(y)^{2}f(y)^{2}}\diff y}\\
&\lesssim  \abs{\int_{-\infty}^{x}  \partial_{y}\brac{W(y)^{2}}f(y)^{2}\diff y} +  \abs{\int_{-\infty}^{x} W(y)^{2}f(y)f_{y}(y)\diff y}\\
&=  \abs{\int_{-\infty}^{x}  \partial_{y}\brac{W_{1}(y)W_{2}(y)}f(y)^{2}\diff y} +  \abs{\int_{-\infty}^{x} W_{1}(y)W_{2}(y)f(y)f_{y}(y)\diff y}\\
&\lesssim \norm[L^{2}]{W_{1}f}^{2} + \norm[L^{2}]{W_{1}f}\norm[L^{2}]{W_{2}f_{x}}.
\end{align*}
Hence, we obtain that
\begin{align*}
\norm[L^{\infty}]{Wf} &\lesssim  \norm[L^{2}]{W_{1}f} + \norm[L^{2}]{W_{2}f_{x}}.
\end{align*}
Now, let us consider the case where $f:\mathbb{R}\rightarrow\mathbb{R}$ is not compactly supported. Then, we have that $\chi_{\delta}f$ is compactly supported in $[-\delta^{-1},\delta^{-1}]$ and
\begin{align*}
\lim_{\delta\rightarrow 0}\norm[L^{2}]{W_{1}f-W_{1}\chi_{\delta}f} = 0.
\end{align*}
Moreover, since
\begin{align*}
\abs{W_{2}(x)\partial_{x}\chi_{\delta}(x)} &\lesssim \delta \abs{\brac{1+x^{2}}^{\frac{1}{2}} W_{1}(x)\partial_{x}\chi(\delta x)} \lesssim \abs{W_{1}(x)\partial_{x}\chi(\delta x)}
\end{align*}
we have that
\begin{align*}
\norm[L^{2}]{W_{2}f_{x}-W_{2}\brac{\chi_{\delta}f}_{x}} &\leq \norm[L^{2}]{W_{2}f_{x}-W_{2}\chi_{\delta}f_{x}} + \norm[L^{2}]{W_{1}\brac{\partial_{x}\chi(\delta x)}f}.  
\end{align*}
It follows that $\norm[L^{2}]{W_{1}\brac{\partial_{x}\chi(\delta x)}f}\rightarrow 0$ as $\delta \rightarrow 0$ from the Dominated Convergence Theorem since $W_{1}f\in L^{2}(\mathbb{R})$. Hence,
\begin{align*}
	\lim_{\delta\rightarrow 0}\norm[L^{2}]{W_{2}f_{x}-W_{2}\brac{\chi_{\delta}f}_{x}}  = 0. 
\end{align*}
By \eqref{ineq:infinityweight}, we have that $W\chi_{\delta}f$ is a Cauchy sequence in $L^{\infty}(\mathbb{R})$. As each of them tend to zero as $\abs{x}\rightarrow \infty$, we have that $Wf \in L^{\infty}(\mathbb{R})$ and moreover, it satisfies \eqref{ineq:infinityweight} and $W(x)f(x) \rightarrow 0$ as $\abs{x}\rightarrow \infty$.
\end{proof}


\bibliographystyle{amsplain}
\bibliography{/Users/siddhant/Desktop/Writing/Main.bib}

\end{document}